\numberwithin{equation}{section}
\date{\today}
\keywords{Post-critically finite endomorphism, Julia set, laminar current, Fatou disk.}
\author{Zhuchao Ji}
\title{STRUCTURE OF JULIA SETS FOR POST-CRITICALLY FINITE
ENDOMORPHISMS ON $\mathbb{P}^2$}
\address{Sorbonne Université, Laboratoire de Probabilités, Statistique et Modélisation (LPSM, UMR 8001),   4 place Jussieu, 75252 Paris Cedex 05, France}
\email{zhuchaoji.math@gmail.com}
\subjclass[2010]{37F15, 37F50, 32U40.}
\newtheorem{theorem}{Theorem}[section]
\newtheorem{definition}[theorem]{Definition}
\newtheorem{proposition}[theorem]{Proposition}
\newtheorem{corollary}[theorem]{Corollary}
\newtheorem{lemma}[theorem]{Lemma}
\newtheorem*{theorem 4.4}{Theorem 4.3}
\newtheorem*{theorem 6.1}{Theorem 6.1}
\newtheorem*{proposition 5.5}{Proposition 5.5}
\newtheorem*{theorem*}{Theorem}
\begin{document}

	\maketitle

	\begin{abstract}
Let $f$ be a post-critically finite endomorphism (PCF map for short) on $\mathbb{P}^2$, let $J_1$ denote the Julia set and let $J_2$ denote the support
of the measure of maximal entropy. In this paper we show that: 1. $J_1\setminus J_2$ is contained in the union of the (finitely many) basins of  critical component cycles and stable manifolds of  sporadic super-saddle cycles. 2. For every $x\in J_2$ which is not contained in the stable manifold of a sporadic super-saddle cycle, there is no Fatou disk  containing $x$. Here sporadic means that the super-saddle cycle is not contained in a critical component cycle.  Under the additional assumption that all branches
of $PC(f)$ are smooth and intersect transversally, we show that there is no sporadic super-saddle cycle. Thus in this case  $J_1\setminus J_2$ is contained in  the union of the basins of  critical component cycles, and for every $x\in J_2$  there is no Fatou disk  containing $x$. 
\par As   consequences of our result: 1.We answer some questions of Fornaess-Sibony about the non-wandering set for PCF maps on $\mathbb{P}^2$ with no sporadic super-saddle cycles. 2. We give a new proof of de Th\'elin's laminarity of the Green current in $J_1\setminus J_2$ for PCF maps on $\mathbb{P}^2$. 3. We show that  for PCF maps on $\mathbb{P}^2$ an invariant compact set is expanding if and only if it does not contain critical points, and we obtain characterizations of PCF maps on $\mathbb{P}^2$ which are expanding on $J_2$ or satisfy Axiom A.
	\end{abstract}
\section{Introduction}
\subsection{Background}
Let $f : \mathbb{P}^2\to \mathbb{P}^2$ be a holomorphic endomorphism of degree $\geq 2$, where
$\mathbb{P}^2$ is the complex projective plane. The  {\em first Julia set} $J_1$ is defined as the locus where the
iterates $(f^n)_{n\geq 0}$ do not locally form a normal family, i.e. the complement of the {\em Fatou set}.
Let $T$ be the {\em dynamical Green current} of $f$, defined by $T = \lim_{n \to +\infty}d^{-n}(f^n)^*\omega_{FS}$, where $\omega_{FS}$ is
the Fubini-Study (1,1) form on $\mathbb{P}^2$. The Julia set $J_1$ coincides with $\text{Supp}\;(T)$, and the self intersection measure $\mu = T \wedge T$ is the unique measure of maximal entropy of $f$. See Dinh-Sibony \cite{dinh2010dynamics} for
background on holomorphic dynamics on projective spaces.

\medskip
\par We define the {\em second Julia set} to be $J_{2}=$ Supp $\mu$. From the definitions we know that $J_{2} \subset J_{1}$. By Briend-Duval \cite{briend1999exposants}, $J_2$ is contained in the closure of the set of repelling periodic points. However contrary to the situation in dimension one there may exists repelling periodic point outside $J_2$. A major problem in holomorphic dynamics is to investigate the structure of $J_{1} \setminus J_{2}$. A promising picture is that $J_{1} \setminus J_{2}$ is foliated (in some appropriate sense) by holomorphic disks $D$ along which $\left(\left.f^{n}\right|_{D}\right)_{n \geq 0}$ is a normal family. Such disks are called {\em Fatou disks}. The dynamical Green current $T$ is called {\em laminar} in some open set $\Omega$ if it expresses as an integral of integration currents over a measurable family of compatible holomorphic disks (which means that these disks have no isolated intersections) in $\Omega.$ These disks are automatically Fatou disks. Let $\sigma_{T}=T \wedge \omega$ be the trace measure of $T,$ which is a natural reference measure on $J_{1}$. If $T$ is laminar in $\Omega$, then for $\sigma_{T}$
a.e. $x \in \Omega$ there exists a germ of holomorphic Fatou disk $D$ containing $x$. De Thélin proved in \cite{de2004laminarite} and \cite{de2005phenomene} that $T$ is laminar in $J_1\setminus J_2$ for {\em post-critically finite endomorphisms} on $\mathbb{P}^{2}.$ We note that by the works of Dujardin in general $T$ is not necessarily laminar in $J_1\setminus J_2$  \cite{dujardin2016non}, but a related weaker result holds \cite{dujardin2012fatou}.
\medskip

A holomorphic endomorphism $f$ on $\mathbb{P}^{k}, k \geq 1$ is called post-critically finite (PCF for short)
if the post-critical set
\begin{equation*}
PC(f):=\bigcup_{n \geq 1} f^{n}(C(f))
\end{equation*}
is an algebraic subset of $\mathbb{P}^{k},$ where
\begin{equation*}
C(f):=\left\{x \in \mathbb{P}^{k}: D f(x) \text { is not invertible }\right\}
\end{equation*}
is the critical set.
\medskip
\par In dimension 1 , this coincides with the usual definition of PCF maps on the Riemann sphere
$\mathbb{P}^{1}$. PCF maps play an important role in one-dimensional complex dynamics, mainly because the remarkable topological classification theorem of Thurston \cite{douady1993proof}. PCF maps are still of interest in higher dimension, and their dynamics have been investigated by many authors. Here are some  results which are useful for our purpose:
\medskip
\par 1. The Fatou set of a PCF maps on $\mathbb{P}^{2}$ is the union of the basins of super-attracting cycles (Fornaess-Sibony \cite{Fornaess1994complex}, Ueda \cite{veda1998critical} and Rong \cite{rong2008fatou}).
\medskip
\par 2.  $J_{k}=\mathbb{P}^{k}$ for strictly PCF maps on $\mathbb{P}^{k}$ ($k=2$ by Jonsson \cite{jonsson1998some}, general $k$ by Ueda \cite{uedacritically}), our main result can be seen as a generalization of Jonsson's result. 
\medskip
\par 3. The eigenvalues of periodic points of PCF maps on $\mathbb{P}^{2}$ are either 0 or larger than 1 (Le \cite{le2019fixed}).
\medskip
\par The dynamics of PCF maps on $\mathbb{P}^{k}$, $k \geq 2$ were further studied by Ueda \cite{uedacritically} and Astorg \cite{astorg2018dynamics}. Moreover, interesting examples of PCF maps were constructed by Crass \cite{crass2005family}, Fornaess-Sibony \cite{Fornaess1992critically} and Koch \cite{koch2013teichmuller}.
\medskip
\subsection{Basins of critical component cycles.} In this paper we investigate the dynamics on
the Julia sets $J_1$ and $J_2$ for PCF maps on $\mathbb{P}^{2}$. Let $f$ be a PCF map on $\mathbb{P}^{2}$. Recall that the critical set $C(f)$ is an algebraic curve. We call an irreducible component $\Lambda$ of $C(f)$ periodic if there exists an integer $n \geq 1$ such that $f^{n}(\Lambda)=\Lambda.$ Such an irreducible component is called a {\em periodic critical component}. There are finitely many periodic critical components. The set $\left\{\Lambda, f(\Lambda), \ldots, f^{n-1}(\Lambda)\right\}$ is called a {\em critical component cycle}. Similarly, a critical point $x$ satisfying $f^{n}(x)=x$ for some $n \geq 1$ is called a {\em periodic critical point}. The set $\left\{x, f(x), \ldots, f^{n-1}(x)\right\}$ is called a {\em critical point cycle}. Since
$f^{n}$ and $f$ have the same Julia sets, to investigate the structure of $J_{1} \setminus J_{2}$ we may assume that all periodic critical components are invariant.
\medskip
\par An important observation is that in $\mathbb{P}^{2}$, any invariant critical component $\Lambda$ is an {\em attracting set}. By definition an attracting set $\Lambda$ in $\mathbb{P}^{2}$ is an invariant compact subset such that there is a neighborhood $U$ of $\Lambda$ satisfying $f(U) \subset \subset U,$ and $\Lambda=\bigcap_{n\geq 1} f^{n}(U).$ The open set $U$ is called
a {\em trapping region} of $\Lambda .$ The attracting basin $\mathcal{B}(\Lambda)$ of an attracting set $\Lambda$ is by definition the
set $\bigcup_{n \geq 0} f^{-n}(U),$ where $U$ is a trapping region of $\Lambda .$ Equivalently $\mathcal{B}(\Lambda)$ is the set of points attracted by $\Lambda,$ i.e.
\begin{equation*}
B(\Lambda)=\left\{x \in \mathbb{P}^{2}: \operatorname{dist}\left(f^{n}(x), \Lambda\right) \rightarrow 0, \text{as}\;n \rightarrow+\infty\right\}.
\end{equation*}
An attracting basin in $\mathbb{P}^{2}$ is always disjoint from $J_{2},$ except when $\Lambda=\mathbb{P}^{2},$ see \cite{taflin2018attracting} Proposition 1.1 for a proof.
\medskip
\par Fornaess and Sibony \cite{Fornaess1994complex} proved that for any fixed Riemannian metric on $\mathbb{P}^{2},$ if $\Lambda$ is an invariant critical component, then for $ x \in \mathbb{P}^{2}$, when $\text{dist}\;(x,\Lambda) \to 0$, we have
\begin{equation*}
\operatorname{dist}(f(x), \Lambda)=o(\operatorname{dist}(x, \Lambda))
\end{equation*}
\par It follows that for $\epsilon>0$ sufficiently small, the $\epsilon$ -neighborhood 
\begin{equation*}
U_{\epsilon}=\left\{x \in \mathbb{P}^{2}: \operatorname{dist}(x, \Lambda)<\epsilon\right\} 
\end{equation*}satisfies $f\left(U_{\epsilon}\right) \subset \subset U_{\epsilon},$ and $\Lambda=\bigcap_{n \geq 1} f^{n}\left(U_{\epsilon}\right)$. Then $\Lambda$ is an attracting set with trapping region $U_{\epsilon}$.
Fornaess and Sibony \cite{Fornaess1994complex} also showed that an invariant irreducible curve in $\mathbb{P}^{2}$ has genus 0 or
1. Bonifant, Dabija and Milnor showed that an elliptic curve can not be an attracting set \cite{bonifant2007},
so the invariant critical component $\Lambda$ must be a (possibly singular) rational curve.
\medskip
\par Now let $f$ be a PCF map on $\mathbb{P}^{2}$ with an invariant critical component $\Lambda.$ Let
$\pi: \widehat{\Lambda} \rightarrow \Lambda$ be the normalization of $\Lambda,$ then $f$ restricted to $\Lambda$ lifts to a map $\hat{f}$ from $\widehat{\Lambda}$ to itself.
By \cite{jonsson1998some}, $\hat{f}$ is a PCF map on $\mathbb{P}^{1}$ of degree $\geq 2 .$ Let $J(f)$ be the Julia set of
$f,$ and let $\hat{\nu}$ be the unique measure of maximal entropy of $\hat{f} .$ Then $\nu=\pi_{*}(\widehat{\nu})$ is an invariant measure on $\mathbb{P}^{2}$. It follows that Supp $\nu=\pi(J(f)),$ and $\nu$ is a hyperbolic measure of saddle type. Daurat showed that the Green current $T$ is laminar in the basin $\mathcal{B}(\Lambda)$ and subordinate
to the stable manifolds $\bigcup_{x \in \text { Supp } \nu} W^{s}(x) .$ See \cite{daurat2016hyperbolic} for the proof and for more details (it is easy to verify in our case the trapping region $U_{\epsilon}$ satisfies conditions (Tub) and (SJ) in Daurat's paper when $\epsilon$ sufficiently small). See also Bedford-Jonsson \cite{bedford2000dynamics} for the case when $\Lambda$ is totally invariant.
\medskip

\subsection{Main results}
At this stage we get a nice description of the dynamics   in the attracting basins of critical component cycles for PCF maps on $\mathbb{P}^{2}$. In this paper we will show that except maybe for a possibly  ``small" set (the stable manifolds of {\em sporadic} super-saddle cycles), every point in $J_1\setminus J_2$ is indeed contained in the basin of a critical component cycle, and for  every point in $J_2$, there is no Fatou disk passing through it.   Recall that a fixed point is called super-saddle if it has one 0 eigenvalue and one eigenvalue with modulus larger than 1. A sporadic super-saddle fixed point is by deninition a super-saddle fixed point which is not contained in a critical component cycle. We will see  in Lemma 2.9 that there are at most finitely many   sporadic super-saddle cycles.  The following is our main result.
\begin{theorem}
Let $f$ be a PCF map on $\mathbb{P}^{2}$ of degree $\geq 2$, then: 
\medskip
\par (1) $J_1\setminus J_2$ is contained in the union of the basins of  critical component cycles and  stable manifolds of sporadic super-saddle cycles. 
\medskip
\par (2) For every $x\in J_2$ which is not contained in the stable manifold of a sporadic super-saddle cycle, there is no Fatou disk  containing $x$.
\end{theorem}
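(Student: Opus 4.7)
The plan is to prove both parts of Theorem 1.1 simultaneously by analyzing Fatou disks under forward iteration. Suppose $D$ is a Fatou disk through a point $x \in J_1$. By definition the family $(f^n|_D)_{n \geq 0}$ is normal, so after passing to a subsequence, $f^{n_k}|_D$ converges to a holomorphic map into $\mathbb{P}^2$, whose image is either a point or a non-degenerate limit disk $D_\infty \subset J_1$. I want to show that each possibility forces $x$ into either the basin of a critical component cycle or the stable manifold of a (possibly sporadic) super-saddle cycle. Statement (2) then follows for free, because the attracting basins of critical component cycles are disjoint from $J_2$ by the cited result of Taflin.

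The first reduction is to suppose the forward orbit of $x$ does not eventually enter any trapping region $U_\epsilon$ around a periodic critical component; otherwise $x$ is already in the basin of a critical component cycle. Combined with Le's dichotomy that periodic eigenvalues of a PCF map on $\mathbb{P}^2$ are either $0$ or of modulus $>1$, this forces the accumulation set of the forward orbit of $x$ to consist of periodic points with at least one expanding direction, i.e.\ repelling or super-saddle cycles. Meanwhile, the existence of a Fatou disk at $x$ provides a direction along which $Df^{n_k}$ stays bounded on a definite neighborhood, so in the limit we must see contraction along the tangent direction of $D_\infty$.

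Next I would analyze $D_\infty$ itself. Since $f$ is PCF, $PC(f)$ is a forward-invariant algebraic curve, so either $D_\infty \subset PC(f)$, or $D_\infty$ meets $PC(f)$ only at a discrete set. In the first case $D_\infty$ lies on an invariant critical component and the orbit of $x$ is attracted to the corresponding critical component cycle, so $x$ is in its basin. In the remaining case one has well-defined inverse branches of $f^n$ on $D_\infty \setminus PC(f)$; a Ueda-type equicontinuity argument on these branches, together with the shrinking of $f^{n_k}|_D$, forces $D_\infty$ to be tangent to an eigendirection of a periodic point of the accumulation set whose eigenvalue is $0$. By Le's theorem such a direction can only arise at a super-saddle cycle, and $x$ must then lie on the corresponding one-dimensional stable manifold; this cycle is sporadic precisely when it is not already absorbed into a critical component cycle.

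The principal obstacle, as I see it, will be the analysis of the limit disk $D_\infty$ when it does not sit inside $PC(f)$: one must rule out subtler degenerations, such as $D_\infty$ clustering on $PC(f)$ tangentially or being tangent to a saddle direction with a non-zero eigenvalue. To handle this I would exploit the post-critical finiteness by passing to the normalization $\pi:\widehat{\Lambda}\to\Lambda$ of the accumulating critical component, where the lifted map $\hat f$ is a one-dimensional PCF map whose measure of maximal entropy is hyperbolic of saddle type (as recalled in the introduction), and use the finiteness of sporadic super-saddle cycles established in Lemma 2.9 to ensure the stable manifolds form a genuinely ``small'' set; some care will also be needed to upgrade $\sigma_T$-a.e.\ statements coming from laminarity to the everywhere statement required in the theorem.
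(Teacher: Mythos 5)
Your proposal assumes a Fatou disk through $x$ and then tries to locate $x$; but Theorem 1.1(1) requires first \emph{producing} a Fatou disk through any $x\in J_1\setminus J_2$ that is outside the basins of critical component cycles. That is the content of the paper's Theorem 4.5, proved by the backward-iteration machinery of Lemmas 4.1--4.3: one uses Ueda's $m$-universal branched covering $\eta:Z\to W$ over a point of bounded ramification $y\in\omega(x)$ (Corollary 2.6), the normality of the lifted inverse branches $g_v$ with $f^v\circ g_v=\eta$ (Lemma 2.7), and the dichotomy that a constant limit of $g_v$ forces $x\in J_2$ (Lemma 4.1) while a nonconstant limit gives a Fatou disk (Lemma 4.2). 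Nothing in your proposal supplies that existence statement, so part (1) is not actually reached.

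Even for the converse direction (your $D\Rightarrow$ basin/stable-manifold implication, the analogue of Theorem 4.8), there are two concrete gaps. First, you assert that the accumulation set of the orbit consists of periodic points with an expanding direction, citing Le's eigenvalue dichotomy; but Le's theorem constrains only periodic eigenvalues and gives no information about nonperiodic $\omega$-limit points. The paper's Lemma 4.3 avoids this by showing that if $\omega(x)$ meets no point of bounded ramification then it lies in the finite set of sporadic critical point cycles, and a Bowen-type argument (Lemma 4.4) forces $\omega(x)$ to be a single super-saddle cycle. If instead $\omega(x)$ does contain a point of bounded ramification, that point is generally not periodic at all, and the heart of the proof deals with precisely that case. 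Second, when the limit disk $D_\infty=\lim f^{n_k}|_D$ is nonconstant, you want it to be tangent to a $0$-eigendirection, but that situation cannot happen: if $f^{n_k}|_D$ converges to a nonconstant map, $D$ does not shrink, so $x$ cannot lie on a one-dimensional stable manifold with an expanding transverse direction. The paper handles this case by a completely different mechanism: Lemma 2.8(2) and Lemma 4.7 extract a Siegel variety $X$ disjoint from $PC(f)$, one lifts to the universal cover of $\mathbb{P}^2\setminus PC(f)$ and to $\mathbb{C}^3\setminus\{0\}$ using the Green function so that the Fatou map becomes a bounded holomorphic map of $\mathbb{D}$, and then the Fatou--Riesz radial-limit theorem (the ingredient borrowed from Le) yields a contradiction. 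Without some substitute for this boundary-value argument, the case of a nondegenerate limit disk outside $PC(f)$ is simply unresolved, and the intuition that ``equicontinuity of inverse branches forces tangency to a $0$-eigendirection'' does not by itself close it.
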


\par Here are some remarks of Theorem 1.1. First, for general holomorphic endomorphisms on $\mathbb{P}^2$, there can exist an invariant compact set $E\subset J_2$ with positive entropy such that for every $x\in E$ there exists a Fatou disk containing $x$, see Taflin \cite{taflin2017blenders} Theorem 1.6. 

\par Second, if $f$ has no critical component cycle, by Theorem 1.1 and the fact that the Green current $T$ put no mass on pluri-polar set, we conclude  that the stable manifolds of sporadic super-saddle cycles must be contained in $J_2$, and we have $J_1=J_2$. However if $f$ has critical component cycles, we can not easily conclude that the stable manifolds of sporadic super-saddle cycles are contained in $J_2$ by the previous argument. In fact the following phenomenon can happen: there exist a positive closed current $S$ with continuous local potential  in $\mathbb{P}^2$  and two disjoint open sets $\Omega_1$, $\Omega_2$ such that $\text{Supp}(S|_ {\Omega_1})$ and $\text{Supp}(S|_{\Omega_2})$ intersect along a subvariety. A similar phenomenon appears in a dynamical context in Dujardin-Favre \cite{dujardin2008distribution} Example 6.13, see also Dujardin \cite{dujardin200911} Section 5.1. Their example is the bifurcation current in the parameter space of cubic polynomials.

\par Third, to the author's knowledge, there are no sporadic super-saddle cycles for all known examples of PCF maps on $\mathbb{P}^2$. It is interesting to know whether there exists a PCF map on $\mathbb{P}^2$ carrying a sporadic super-saddle cycle. If the answer is yes, it is also interesting to know whether the stable manifold of this sporadic super-saddle point is contained in $J_2$ or $J_1\setminus J_2$. However, under the additional assumption that all branches of $PC(f)$ are smooth and intersect transversally, the answer of this question is no. 

\medskip
\begin{theorem}
Let $f$ be a PCF maps on $\mathbb{P}^2$ of degree $\geq 2$ such that all branches
of $PC(f)$ are smooth and intersect transversally, then every super-saddle cycle is contained in a critical component cycle, that is, sporadic super-saddle cycles do not exist.
\end{theorem}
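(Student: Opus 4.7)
My plan is to reduce to the case of a fixed super-saddle by replacing $f$ by $f^n$, which preserves the hypotheses since $PC(f^n)=PC(f)$. So assume $f(p)=p$ with eigenvalues $0$ and $\lambda$ of $Df(p)$ and $|\lambda|>1$; since $Df(p)$ is singular, $p\in C(f)$. I choose holomorphic coordinates $(x,y)$ centered at $p$ that diagonalize $Df(p)=\mathrm{diag}(0,\lambda)$ and straighten the local unstable manifold $W^u(p)$ to $\{x=0\}$; after a Poincar\'e--Dulac-type normalization $f$ takes the form $f(x,y)=(xP(x,y),\lambda y)$ with $P(0,0)=0$, and a direct computation gives $C(f)=\{\det Df=0\}=\{P+xP_x=0\}$ locally.

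Let $\Lambda$ be an irreducible component of $C(f)$ through $p$; assume first that $C(f)$ is smooth at $p$ (the degenerate case is deferred), so $\Lambda$ is the unique smooth local branch with tangent $(-\beta,2\alpha)$ where $\alpha=P_x(0,0)$ and $\beta=P_y(0,0)$. The smoothness of the branches of $PC(f)$ rules out $T_p\Lambda$ being the stable direction $(1,0)$: otherwise a direct parameterization shows $f(\Lambda)$ has a cusp at $p$, contradicting the smoothness of the irreducible component of $PC(f)$ containing $f(\Lambda)$. So $\alpha\neq 0$, and $Df(p)$ maps $T_p\Lambda$ to the unstable line $(0,1)$; iterating, every $f^k(\Lambda)$ for $k\ge 1$ is an irreducible component of $PC(f)$ through $p$ with tangent $(0,1)$. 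The transversality hypothesis forces distinct branches of $PC(f)$ at $p$ to have distinct tangents, so all these iterates coincide with $\Lambda':=f(\Lambda)$, which is therefore $f$-invariant. By uniqueness of the local unstable manifold at the super-saddle $p$, $\Lambda'$ agrees with $W^u(p)=\{x=0\}$ in a neighborhood of $p$.

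The main step is to show $\Lambda=\Lambda'$ locally. From $f(\Lambda)=\Lambda'$ we have $\Lambda\subset f^{-1}(\Lambda')=\{xP=0\}=\{x=0\}\cup\{P=0\}$ near $p$, so the irreducible $\Lambda$ lies inside exactly one of these smooth branches. If $\beta\neq 0$, then $T_p\Lambda=(-\beta,2\alpha)$ matches neither the tangent $(0,1)$ of $\{x=0\}$ nor the tangent $(-\beta,\alpha)$ of $\{P=0\}$, a contradiction. Hence $\beta=0$, $T_p\Lambda=(0,1)$, and $\Lambda$ equals either $\{x=0\}$ or $\{P=0\}$ locally. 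In the second case, $P|_\Lambda\equiv 0$ gives $\det Df|_\Lambda=\lambda xP_x|_\Lambda$, and $\Lambda\subset C(f)$ forces $xP_x\equiv 0$ on $\Lambda$; but $P_x(0,0)=\alpha\neq 0$ makes $P_x$ non-vanishing on $\Lambda$ near $p$, so $x|_\Lambda\equiv 0$, i.e., $\Lambda=\{x=0\}$ after all. Thus $\Lambda=\Lambda'=\{x=0\}$ locally, and $\Lambda$ is a globally $f$-invariant component of $C(f)$ containing $p$.

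The hardest part is handling the degenerate case where $C(f)$ is itself singular at $p$ ($\alpha=\beta=0$); the branches of the tangent cone of $C(f)$
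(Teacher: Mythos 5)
Your proposal is genuinely incomplete: it stops mid-sentence at what you yourself call "the hardest part," the case $\alpha=\beta=0$ where $C(f)$ is singular at $p$. This case cannot be dismissed. The hypothesis is only that the branches of $PC(f)=\bigcup_{n\geq 1}f^n(C(f))$ are smooth and transverse, and since $C(f)$ itself is not contained in $PC(f)$, nothing in the hypotheses forces $C(f)$ to be smooth at $p$; the super-saddle $p$ lies on $C(f)$ but the local structure of $C(f)$ there is a priori unconstrained. So a full proof must deal with singular $C(f)$ at $p$, and that part is simply absent. A secondary gap is the ``cusp'' step used to rule out $T_p\Lambda=(1,0)$: you assert that $f(\Lambda)$ then has a cusp, but the parametrization $x\mapsto\bigl(xP(x,\gamma(x)),\lambda\gamma(x)\bigr)$ with $\gamma(0)=\gamma'(0)=0$ has vanishing orders depending on how degenerate $P(x,0)$ is, and one has to check the several sub-cases (ultimately they do force a singular image or force $f$ to collapse $\Lambda$ to a point, contradicting finiteness, but the claim as written is not a one-liner).

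It is worth noting where you diverge from the paper, because your route is cleaner in the nondegenerate case. You partially linearize to the normal form $f=(xP,\lambda y)$, so the unstable manifold is $\{x=0\}$ and $C(f)=\{P+xP_x=0\}$, and you work directly with a branch $\Lambda$ of $C(f)$: tangent computation $+$ transversality of branches of $PC(f)$ forces $\Lambda'=f(\Lambda)$ to be the (invariant) unstable manifold, and then the inclusion $\Lambda\subset f^{-1}(\Lambda')=\{x=0\}\cup\{P=0\}$ together with $\Lambda\subset C(f)$ pins down $\Lambda=\{x=0\}$. The paper instead deliberately avoids any discussion of the local geometry of $C(f)$ at $x_0$: it works only with components of $PC_{x_0}(f)$ (smooth by hypothesis), uses Ueda's Lemma 3.5 to produce a second component $V_1\neq V$, Le's Proposition 5.5 to identify the eigenvalue along $V$, and then the smoothness of the preimage curve $C$ (with $f(C)=V_1$) comes for free from the implicit function theorem applied to the second coordinate of $f$, not from an assumption on $C(f)$. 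That is precisely why the paper does not have a ``degenerate case'' to confront. If you want to salvage your approach you will either need a separate argument (e.g.\ a Newton-polygon/Puiseux analysis of the branches of $\{P+xP_x=0\}$ when $P$ starts in degree $\geq 2$) or follow the paper and shift the smoothness hypothesis onto the branches of $PC(f)$ rather than onto $C(f)$.
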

\medskip
\par We note that the assumption that all branches of $PC(f)$ are smooth and intersect transversally are satisfied by examples constructed by Crass \cite{crass2005family}, Fornaess-Sibony \cite{Fornaess1992critically} and Koch \cite{koch2013teichmuller}, in fact in these examples $PC(f)$ are union of projective lines. A direct corollary of Theorem 1.1 and 1.2 is the following.
\begin{corollary}
Let $f$ be a PCF maps on $\mathbb{P}^2$ of degree $\geq 2$ such that all branches
of $PC(f)$ are smooth and intersect transversally, then 
\medskip
\par (1) $J_1\setminus J_2$ is contained in  the union of the basins of  critical component cycles
\medskip
\par (2) There is no Fatou disk containing $x$ for every $x\in J_2$. 
\end{corollary}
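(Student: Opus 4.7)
The plan is to observe that Corollary 1.3 is a purely formal consequence of Theorems 1.1 and 1.2, with no additional dynamical content needed. Since the transversality hypothesis on the branches of $PC(f)$ is exactly the hypothesis of Theorem 1.2, I would simply invoke that theorem to conclude that the class of sporadic super-saddle cycles is empty, and then substitute this information into each clause of Theorem 1.1.

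For part (1), Theorem 1.1(1) places $J_1\setminus J_2$ inside the union of two families of sets: the basins of critical component cycles, and the stable manifolds of sporadic super-saddle cycles. Under the transversality assumption, Theorem 1.2 tells us that no sporadic super-saddle cycle exists, so the corresponding union of stable manifolds is empty. The inclusion therefore reduces to the statement that $J_1\setminus J_2$ is contained in the union of basins of critical component cycles, which is exactly the claim of Corollary 1.3(1).

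For part (2), the hypothesis ``$x\in J_2$ is not contained in the stable manifold of a sporadic super-saddle cycle'' appearing in Theorem 1.1(2) is vacuously satisfied for every $x\in J_2$, again by Theorem 1.2. Consequently the conclusion of Theorem 1.1(2) applies to every point of $J_2$, yielding Corollary 1.3(2).

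There is no real obstacle here: the entire force of the corollary is absorbed into Theorems 1.1 and 1.2. The only thing one might want to double-check is that the notion of ``sporadic super-saddle cycle'' used in the two theorems matches (fixed points/periodic orbits with one zero eigenvalue and one eigenvalue of modulus $>1$, not lying on any critical component cycle), so that the non-existence statement in Theorem 1.2 really eliminates the exceptional set appearing in Theorem 1.1; this is immediate from the definitions given in Section 1.3. Consequently the proof of Corollary 1.3 reduces to one line: combine Theorem 1.1 with Theorem 1.2.
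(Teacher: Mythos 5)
Your proposal is correct and takes exactly the approach the paper intends: the paper itself labels Corollary 1.3 as ``a direct corollary of Theorem 1.1 and 1.2'' without giving a separate proof, and your two-step substitution (Theorem 1.2 eliminates sporadic super-saddle cycles, then the exceptional sets in both clauses of Theorem 1.1 become empty) is precisely the intended reading. Your check that ``sporadic super-saddle cycle'' means the same thing in both statements is the right thing to verify and it does hold.
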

\medskip
\subsection{Some corollaries of the main results}
In \cite{Fornaess2001some}, Fornaess and Sibony asked several questions about the non-wandering set $\Omega(f)$ of
a holomorphic endomorphism on $\mathbb{P}^2$. The following are the questions:
\medskip
\par Q1: Is $\Omega(f)$ the closure of periodic points and Siegel varieties? (Here a Siegel variety is an irreducible analytic set $X$ of positive dimension such that there exists a subsequence $\left\{n_j\right\}$ such that $f^{n_j}|_X\to Id$.)
\par Q2: Is the closure of the repelling periodic points open in $\Omega(f)$?
\par Q3: Is $J_2$ open in $\Omega(f)$?
\par Q4: Describe $\Omega(f)\setminus J_2$.
\medskip

As a corollary of Theorem 1.1, we can give answers to the above four questions for PCF maps on $\mathbb{P}^2$ with no sporadic super-saddle cycles. 

\begin{theorem}
Let $f$ be a PCF map on $\mathbb{P}^2$ of degree $\geq 2$. Then $J_2$ is the closure of repelling
periodic points. If we  further assume that there is no sporadic super-saddle cycle,  then $\Omega(f)$ is the closure of periodic points, and $\Omega(f)\setminus J_2$ is  the union of super-attracting cycles together with $\cup (C\cap J_1)$, where $C$ ranges over the set of critical component cycles, in particular  $J_2$ is open in $\Omega(f)$.
\end{theorem}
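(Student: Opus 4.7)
My plan is to derive all four claims from Theorem~1.1 together with two structural inputs: the Fornaess--Sibony transverse decay estimate on invariant critical components, and a standard escape-from-basin argument for non-wandering points. After passing to an iterate I assume each critical component cycle $C$ is a single invariant component, and fix such a $\Lambda$ for the analysis.

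First, for $J_2=\overline{\mathrm{Rep}(f)}$: the inclusion $\supset$ is Briend--Duval. For the reverse, a repelling periodic point $p$ lies in $J_1$, so Theorem~1.1(1) places it in $J_2$, in the basin of a critical component cycle, or in the stable manifold of a sporadic super-saddle cycle. In the last two cases the finite periodic orbit of $p$ must be contained in the cycle itself. But on an invariant critical component $\Lambda$ the Fornaess--Sibony estimate $\mathrm{dist}(f(x),\Lambda)=o(\mathrm{dist}(x,\Lambda))$ forces the transverse eigenvalue of $Df$ to vanish at every point of $\Lambda$, while a super-saddle carries a zero eigenvalue by definition. Either conclusion contradicts both eigenvalues of $p$ being expanding, so $p\in J_2$.

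The key structural step is to identify $\Lambda\cap J_1$ with $\pi(J(\hat f))$, where $\hat f$ is the induced PCF map on the normalization. The nontrivial inclusion is $\subset$: if $\hat x$ lies in the Fatou set of $\hat f$, then by the PCF classification of Fatou components on $\mathbb{P}^1$ the point $\hat x$ lies in the basin of a super-attracting cycle of $\hat f$, and at every point $p$ of that cycle the along-$\Lambda$ eigenvalue vanishes by super-attraction and the transverse eigenvalue vanishes by Fornaess--Sibony, so $p$ is super-attracting for $f$. Combining a trapping neighborhood of $\Lambda$ with continuity then shows that an entire $\mathbb{P}^2$-neighborhood of $x$ is attracted to the $f$-orbit of $p$, placing $x$ in the Fatou set of $f$. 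The reverse inclusion is immediate: repelling periodic points of $\hat f$ lift to $f$-periodic points with an expanding along-$\Lambda$ eigenvalue. In particular $\Lambda\cap J_1\subset \overline{\mathrm{Per}(f)}\cap \Omega(f)$, and it is disjoint from $J_2$ since $\Lambda\subset \mathcal{B}(\Lambda)$ and $\mathcal{B}(\Lambda)\cap J_2=\emptyset$.

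To assemble (c) I use the escape-from-basin argument: for any attracting set $A\subsetneq\mathbb{P}^2$ (super-attracting cycle or critical component cycle), a point $x\in \mathcal{B}(A)\setminus A$ is wandering, because one can shrink a neighborhood $U$ of $x$ disjoint from $A$ so that $f^j(U)\cap U=\emptyset$ for $j=1,\dots,N$ (using that such $x$ cannot be periodic, else $x\in A$) while $f^n(U)$ lies in a trapping neighborhood of $A$ disjoint from $U$ for $n>N$. Under the no-sporadic-super-saddle hypothesis, Theorem~1.1(1) then forces $\Omega(f)\cap (J_1\setminus J_2)\subset \bigcup_C (C\cap J_1)$, while the same basin argument combined with the Fatou classification of Fornaess--Sibony, Ueda, Rong gives $\Omega(f)\cap\mathrm{Fatou}(f)$ equal to the union of super-attracting cycles; the reverse inclusions come from the previous paragraph. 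Statements (b) and (d) then follow: (b) from (a) together with the fact that each piece of the decomposition in (c) lies in $\overline{\mathrm{Per}(f)}$, and (d) from the observation that the right-hand side of (c) is a finite union of closed sets, hence closed in $\Omega(f)$. The main obstacle I expect is the structural identification $\Lambda\cap J_1=\pi(J(\hat f))$; it is what prevents a residual set of wandering Julia points from surviving in $\mathcal{B}(\Lambda)$ after the escape argument, and it hinges on upgrading the Fornaess--Sibony $o(\cdot)$ decay to the assertion that the transverse eigenvalue of $Df$ vanishes identically on $\Lambda$.
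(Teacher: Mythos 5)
Your proposal follows essentially the same route as the paper: claim (a) reduces to the fact that repelling periodic points lie in $J_2$ (you re-derive this from Theorem~1.1, which is valid but indirect, since the paper already proved it as Theorem~1.8 and in fact uses Theorem~1.8 as an input to the proof of Theorem~1.1); the central structural step, identifying $\Lambda\cap J_1$ with $\pi(J(\hat f))$ and deducing density of periodic points, is exactly the paper's Lemma~5.1; the escape-from-basin argument is the content of the paper's ``it is clear that $x$ must be a super-attracting periodic point / must be contained in a critical component cycle''; and the assembly of the remaining claims is the same. The one place your argument genuinely needs shoring up is the one you flag: to conclude that a super-attracting cycle of $\hat f$ projects to a super-attracting cycle of $f$, your eigenvalue-splitting into ``along-$\Lambda$'' and ``transverse'' pieces tacitly assumes $\Lambda$ is smooth at the point in question, and the Fornaess--Sibony $o(\cdot)$ decay alone does not control what happens at a singular point of $\Lambda$. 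The paper closes this gap not by upgrading the decay estimate, but by invoking Le's Proposition~5.4: for a PCF map, a super-saddle periodic point lying on an invariant post-critical component is automatically a \emph{smooth} point of that component, with the expanding eigenvector tangent to it; combined with $D\hat f=0$ at a super-attracting point of $\hat f$, this directly forces the projected point to be super-attracting. So your approach is right, but the missing ingredient is this citation of Le rather than a strengthening of Fornaess--Sibony.
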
 
\medskip
Next, as a direct corollary of Theorem 1.1, we obtain a new proof of the laminarity of the Green current in $J_1\setminus J_2$ for PCF maps on $\mathbb{P}^2$, which was first proved by de Th\'elin \cite{de2005phenomene}.
\begin{corollary}[de Th\'elin]
Let $f$ be a PCF map on $\mathbb{P}^2$ of degree $\geq 2$. Then the Green current $T$ is laminar in $J_1\setminus J_2$.
\end{corollary}
\medskip
\par We note that Fornaess and Sibony have studied the non-wandering set and the problem of laminarity of the Green current for $s-$hyperbolic holomorphic endomorphisms on $\mathbb{P}^2$ ($s-$hyperbolic means Axiom A plus some technical conditions), see \cite{Fornaess1998hyperbolic}.
\medskip
\par Finally, we obtain a charactrization of  expanding invariant compact sets for PCF maps on $\mathbb{P}^2$. By using both  Theorem 1.1 and the methods in the proof of Theorem 1.1, we obtain the following result.
\begin{theorem}
Let $f$ be a PCF map on $\mathbb{P}^2$ of degree $\geq 2$ and let $K$ be an invariant compact set. Then $K$ is expanding if and only if $K$ does not contain critical points.
\end{theorem}
\medskip
We note that for rational maps on $\mathbb{P}^1$, an invariant compact set in the Julia set is expanding if it does not intersect  the closure of the post-critical set. It is an open problem that whether the same is true for holomorphic endomorphisms on $\mathbb{P}^k$, i.e. whether an invariant compact set in $J_k$ is expanding if it does not intersect  the closure of the post-critical set. See Maegawa \cite{maegawa2005fatou} for a discussion of this problem. Theorem 1.6 answer this question for PCF maps on $\mathbb{P}^2$.
\medskip
\par As corollaries of Theorem 1.6 and Theorem 1.1, we obtain characterizations of PCF maps on $\mathbb{P}^2$ which are expanding on $J_2$ or satisfy Axiom A.
\begin{corollary}
Let $f$ be a PCF map on $\mathbb{P}^2$ of degree $\geq 2$. Then $f$ is expanding on $J_2$ if and only if every critical component of $f$ is preperiodic to a  critical component cycle, and $f$ is Axiom A  if and only if $f$ is expanding on $J_2$, and for every critical component cycle $C$, $C\cap J_1$ is a hyperbolic (saddle) set.
\end{corollary}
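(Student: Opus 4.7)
The plan is to derive both equivalences from Theorem~1.6 together with Theorems~1.1 and~1.4, using Le's rigidity of periodic multipliers.

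\emph{First equivalence.} For the easy direction, each critical component cycle of period $k$ is an attracting set (apply the Fornaess-Sibony contraction estimate to the $f^k$-invariant critical component $V_0$), and its basin is disjoint from $J_2$; combined with total invariance of $J_2$, this gives $C(f)\cap J_2=\emptyset$, so $f$ is expanding on $J_2$ by Theorem~1.6. For the converse, suppose $f$ is expanding on $J_2$, so $C(f)\cap J_2=\emptyset$. Forward iterates of a curve under an endomorphism of $\mathbb{P}^2$ remain curves (Bezout rules out collapse to a point without indeterminate values), and since $PC(f)$ has finitely many irreducible components, the forward $f$-orbit of any critical component $\Lambda$ enters a cycle $\{V_0,\ldots,V_{k-1}\}$ of irreducible curves, each disjoint from $J_2$. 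I claim some $V_i\subset C(f)$. Otherwise set $g=f^k$: then $V_0$ is $g$-invariant, $V_0\not\subset C(g)$, and $B:=\{x\in V_0:\det Dg(x)=0\}$ is finite. The map $\hat g$ induced on the normalization $\widehat V_0\cong\mathbb{P}^1$ is PCF of degree $\geq 2$, so its repelling periodic orbits are dense in $J(\hat g)$; since $B$ is finite, all but finitely many such orbits project into $V_0\setminus B$. At the corresponding periodic point $p$ of $f$, the tangent eigenvalue of $Dg^n$ has modulus $>1$ and the normal eigenvalue is non-zero, hence has modulus $>1$ by Le's theorem. So $p$ is a repelling periodic point of $f$ on $V_0$, giving $p\in J_2$ by Briend-Duval, contradicting $V_0\cap J_2=\emptyset$.

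\emph{Axiom A equivalence.} Expansion on $J_2$ rules out sporadic super-saddle cycles: any super-saddle $p$ of period $n$ satisfies $\det Df^n(p)=0$, so $f^j(p)\in C(f)$ for some $j$; by the first equivalence the component of $C(f)$ containing $f^j(p)$ is preperiodic to a critical component cycle, and since the orbit of $p$ is periodic it must sit entirely in that cycle, so $p$ is non-sporadic. Theorem~1.4 thus applies, giving
\begin{equation*}
\Omega(f) = J_2 \,\cup\, \{\text{super-attracting cycles}\} \,\cup\, \bigcup_{C}(C\cap J_1),
\end{equation*}
with $C$ ranging over critical component cycles, and the three pieces are pairwise disjoint compact subsets of $\mathbb{P}^2$. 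For $(\Leftarrow)$, the hyperbolic structures on each piece (expanding on $J_2$; trivially stable on super-attracting cycles; saddle on each $C\cap J_1$ by hypothesis), together with density of periodic points on each piece, assemble to Axiom~A. For $(\Rightarrow)$, Axiom~A forces $\Omega(f)$, and hence its invariant subsets $J_2$ and each $C\cap J_1$, to be hyperbolic; positivity of the Lyapunov exponents of $\mu$ (Briend-Duval) forbids a stable bundle on $J_2$, so hyperbolicity upgrades to expansion there.

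The delicate step is the cycle-is-critical argument in the first equivalence: Le's rigidity converts a non-vanishing normal multiplier into a genuinely repelling multiplier, producing a repelling periodic point in $V_0$ where $V_0\cap J_2=\emptyset$ forbids one. Everything else is routine assembly of the preceding theorems.
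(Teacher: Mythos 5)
Your proof is correct and follows essentially the same route as the paper: both derive the first equivalence from Theorem~1.6 together with the production of a repelling periodic point on a non-critical periodic component of $PC(f)$ via normalization and Le's multiplier rigidity, and both derive the Axiom~A equivalence from the decomposition of $\Omega(f)$ in Theorem~1.4. You are in fact slightly more careful than the published proof in one respect: you explicitly verify that expansion on $J_2$ (and, in the $\Rightarrow$ direction, Axiom~A via the positive Lyapunov exponents of $\mu$) excludes sporadic super-saddle cycles before invoking Theorem~1.4, a hypothesis the paper applies without comment. One citation slip worth fixing: the implication ``repelling periodic point $\Rightarrow$ in $J_2$'' for PCF maps is Theorem~1.8 of this paper, not Briend--Duval, which gives only the reverse inclusion that $J_2$ is contained in the closure of the repelling cycles.
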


\subsection{ Outline of the paper.} 
\par The structure of this paper is as follows. Section 2 is devoted to some preliminaires. In particular we recall Ueda's results about Fatou maps and the normality of backward iterates of holomorphic endomorphisms on $\mathbb{P}^{k}$, $k\geq 1$. 
\medskip
\par In section 3 we prove Theorem 1.2 and the following theorem
\begin{theorem}
Let $f$ be a PCF map on $\mathbb{P}^{2}$ of degree $\geq 2$, then $J_{2}$ is the closure of the set of repelling periodic points.
\end{theorem}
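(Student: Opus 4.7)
The plan is to prove the two inclusions separately. The inclusion $J_2\subseteq \overline{\mathrm{Rep}(f)}$ (closure of the set of repelling periodic points) is the classical theorem of Briend--Duval \cite{briend1999exposants} already cited in the introduction, valid for every holomorphic endomorphism of $\mathbb{P}^k$ of algebraic degree $\geq 2$. The substantive content is therefore the reverse inclusion: every repelling periodic point of a PCF map on $\mathbb{P}^2$ lies in $J_2$.

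For the reverse inclusion, the strategy is to combine Theorem~1.1(1) with the finiteness of periodic orbits. Let $p$ be a repelling periodic point of period $N$. By definition both eigenvalues of $Df^N(p)$ have modulus $>1$, so $p$ is not in the Fatou set and hence $p\in J_1$. I argue by contradiction: assume $p\notin J_2$. Then by Theorem~1.1(1), $p$ must lie either in the basin $\mathcal{B}(C)$ of some critical component cycle $C$, or in the stable manifold $W^s(\mathcal{O}(q))$ of some sporadic super-saddle periodic orbit $\mathcal{O}(q)$.

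In the first case, $\operatorname{dist}(f^n(p),C)\to 0$; since the orbit $\{f^n(p)\}_{n\ge 0}$ of the periodic point $p$ takes only finitely many values, this forces $f^k(p)\in C$ for every $k$. Because $C\subseteq C(f)$, we have $\det Df(f^k(p))=0$ for each $k$, hence $\det Df^N(p)=\prod_{k=0}^{N-1}\det Df(f^k(p))=0$, contradicting that both eigenvalues of $Df^N(p)$ have modulus $>1$. In the second case, $f^{nN'}(p)\to q$ where $N'$ is the period of $q$; again finiteness of the orbit of $p$ forces the orbits of $p$ and of $q$ to coincide, but a super-saddle periodic point has by definition a zero eigenvalue, contradicting the hypothesis on $p$. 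Both cases being ruled out, we conclude $p\in J_2$.

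The main obstacle is the appeal to Theorem~1.1, which is the principal technical result of the paper; once it is granted, Theorem~3.1 is a clean formal consequence via the dichotomy above. If one wishes instead to prove Theorem~3.1 prior to Theorem~1.1, the alternative route would be a direct argument at the repelling point itself: linearize $f^N$ at $p$ via Koenig's theorem, and then use Ueda's normality of backward inverse branches on $\mathbb{P}^2\setminus PC(f)$ (recalled in Section~2) together with the local expansion of $f^N$ to transport positive $\mu$-mass into arbitrary neighborhoods of $p$. I expect the dichotomy argument sketched above, which uses only the finite-orbit observation and the eigenvalue rigidity of Le \cite{le2019fixed} applied via the critical locus, is the route actually taken.
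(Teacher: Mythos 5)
Your primary argument is circular as written. You deduce the statement from Theorem~1.1(1), but in the paper Theorem~1.1 is proved \emph{after} and \emph{from} this result: the proof of Lemma~4.1, which feeds into Theorem~4.5 and hence into Theorem~1.1(1), explicitly invokes Theorem~3.3 (the present statement, restated in Section~3) to conclude that a point approximated by repelling periodic points lies in $J_2$. Running the implication in the direction you propose would close a logical loop. The dichotomy argument itself is sound reasoning (finite orbit plus $\operatorname{dist}(f^n(p),C)\to 0$ forces $f^k(p)\in C\subset C(f)$, killing $\det Df^N(p)$; and convergence to a super-saddle cycle forces the orbits to coincide, contradicting invertibility of $Df^N(p)$), but it is not available at this stage of the paper, so it cannot be ``the route actually taken.'' Incidentally the internal restatement is Theorem~3.3, not 3.1.

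The proof the paper gives is essentially the ``alternative route'' you sketched and then set aside, minus the Koenig linearization, which is not needed. Since a repelling periodic point $x_0$ is not critical, Lemma~2.4 gives that $x_0$ has bounded ramification. Fix an arbitrary $y\in\mathbb{P}^2\setminus PC(f)$, take a thin tubular neighborhood $W$ of a smooth arc joining $x_0$ to $y$, and use Corollary~2.6 to produce the branched cover $\eta:Z\to W$ together with the inverse branches $g_n:Z\to W_n$ satisfying $f^n\circ g_n=\eta$, where $W_n$ is the component of $f^{-n}(W)$ through $x_0$. By Lemma~2.7 the family $\{g_n\}$ is normal, and local expansion at the repelling point $x_0$ forces $g_n\to x_0$ uniformly on $\eta^{-1}(\Omega)$ for a small ball $\Omega\ni x_0$; by the identity principle every limit of $\{g_n\}$ on $Z$ is then the constant $x_0$, so $g_n\to x_0$ on all of $Z$. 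In particular $g_n(z_0)\to x_0$ for $z_0$ with $\eta(z_0)=y$, which shows $y\in f^N(U)$ for any neighborhood $U$ of $x_0$ and some $N$. Hence $\mathbb{P}^2\setminus PC(f)\subset\bigcup_{n\geq 0}f^n(U)$, and since $PC(f)$ is algebraic hence pluri-polar, Proposition~3.2 (Sibony's pluri-polarity criterion for $J_k$) yields $x_0\in J_2$. Note the mechanism is not transporting $\mu$-mass directly, as you suggested, but the pluri-polarity characterization of $J_2$; that is the cleaner and more robust tool here.
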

Here is a comment of Theorem 1.8. We note that for a general holomorphic endomorphism on $\mathbb{P}^{2}$, repelling periodic point may not be contained in $J_2$. Indeed there exist examples possessing isolated repelling points outside $J_{2},$ see \cite{Fornaess2001dynamics} and \cite{hubbard1994superattractive}.
The proof of Theorem 1.8 is a rather quick consequence of a result of Ueda \cite{veda1998critical}. 
\medskip
\par In section 4 we prove Theorem 1.1, by using the previous results. The proof is divided into two steps. First, we  prove that there exists a Fatou disk containing $x$ for every  $x\in J_1\setminus J_2$ which is not contained in the basin of critical component cycle. Second, we prove that  if  there is a Fatou disk containing $x$ for some $x\in J_1$,  then $x$ must be contained in the basin of a  critical component cycle or in  the stable manifold of a sporadic super-saddle cycle. Theorem 1.1 is a combination of these two steps.
\medskip
\par In section 5 we prove Theorem 1.4, Corollary 1.5, Theorem 1.6 and Corollary 1.7.
\medskip
\par In section 6 we discuss some open problems about the existence of sporadic super-saddle periodic points (for PCF maps on $\mathbb{P}^{2}$) and the possible generalization of Theorem 1.1 to higher dimension.
\medskip

\par {\em Acknowledgements.} I would like to thank my advisor Romain Dujardin for his advice, help and encouragement during the course of this work. I also would like to thank Xavier Buff, Henry de Th\'elin, Van Tu Le, Jasmin Raissy, Matteo Ruggiero, Weixiao Shen and Gabriel Vigny for useful discussion.

\section{Preliminaires}
In this section we recall some results of Ueda that we will use later. We start with the following definitions of Ueda \cite{veda1998critical} Definition 4.5 and \cite{uedacritically} Section 1.

\begin{definition}
Let $f$ be a holomorphic endomorphism on $\mathbb{P}^{k}$ of degree $\geq 2.$ Let $Z$ be a complex analytic space. A holomorphic map $h: Z \rightarrow \mathbb{P}^{k}$ is called a Fatou map if $\left\{f^{n} \circ h\right\}_{n \geq 1}$ is a normal family. A Fatou disk $D \subset \mathbb{P}^{k}$ is an image of a non-constant Fatou map $\phi: \mathbb{D} \rightarrow \mathbb{P}^{k},$ where $\mathbb{D}$ is the unit disk.
\end{definition} 
Note that with this definition, a Fatou disk may be singular.
\medskip
\par The following fundamental result about Fatou maps can be found in Ueda \cite{uedacritically} Proposition 2.1. 
\par Let $\pi:\mathbb{C}^{k+1}\setminus \left\{0\right\}\to \mathbb{P}^k$ be the canonical projection. Given a holomorphic map $\phi:X\to \mathbb{P}^k$, we say a holomorphic map $\Phi:X\to \mathbb{C}^{k+1}\setminus \left\{0\right\} $ is a lift of $\phi$ if $\pi\circ\Phi=\phi$ holds.
\par For a homogeneous regular polynomial endomorphism $F$ on $\mathbb{C}^k$, we define the dynamical Green function as 
\begin{equation*}
G(z):=\lim_{n\to \infty}\frac{1}{d^n} \log \|F^n(z)\|,\;\text{for}\;z\in \mathbb{C}^k.
\end{equation*}
\begin{theorem}
Let $X$ be a complex analytic space. Let $\phi:X\to\mathbb{P}^k$ be a holomorphic map. Then the following are equivalent.
\medskip
\par (1) $\phi$ is a Fatou map.
\medskip
\par (2) $\left\{f^j\circ \phi\right\}$ contains a locally uniformly convergent subsequence.
\medskip
\par (3) If $V$ is an open set  of $X$ and $\Phi_V:V\to \mathbb{C}^{k+1}\setminus \left\{0\right\}$ is a holomorphic lift of $\phi|_V$, then $G\circ \Phi_V$ is pluriharmonic on $V$.
\medskip
\par (4) For any $x\in X$, there exists a neighborhood  $V$ of $x$ and $\Phi_V:V\to \mathbb{C}^{k+1}\setminus \left\{0\right\}$  a holomorphic lift of $\phi|_V$, such that $G\circ \Phi_V$ is identically $0$  on $V$.
\end{theorem}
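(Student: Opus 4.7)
The plan is to pull everything back via a homogeneous polynomial lift $F:\mathbb{C}^{k+1}\to\mathbb{C}^{k+1}$ of $f$, of degree $d$, and analyse things through its Green function $G$. The preliminary facts I rely on are: $G$ is continuous and plurisubharmonic on $\mathbb{C}^{k+1}\setminus\{0\}$, satisfies $G\circ F = dG$ and $G(\lambda z)=G(z)+\log|\lambda|$, the convergence $G_n(z):=d^{-n}\log\|F^n(z)\|\to G(z)$ is locally uniform, and, since $G$ is bounded on the unit sphere by some constant $M$, the level set $\{G=0\}$ is contained in the annulus $\{e^{-M}\leq\|z\|\leq e^M\}$ and is therefore compact in $\mathbb{C}^{k+1}\setminus\{0\}$. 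For any local lift $\Phi_V\colon V\to\mathbb{C}^{k+1}\setminus\{0\}$, the function $u_V:=G\circ\Phi_V$ is psh, and changing lifts replaces $u_V$ by $u_V+\log|\alpha|$ for some nowhere-vanishing holomorphic $\alpha$, so both pluriharmonicity of $u_V$ and the existence of a lift with $u_V\equiv 0$ are intrinsic properties of $\phi|_V$.

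The implication $(1)\Rightarrow(2)$ is immediate from the definition of normality. For $(2)\Rightarrow(3)$, extract a locally uniformly convergent subsequence $f^{n_j}\circ\phi\to\psi$. After shrinking $V$ so that $\psi(V)$ lies in an affine chart, a continuous section of $\pi$ produces holomorphic lifts $\widetilde{\Phi}_j\colon V\to\mathbb{C}^{k+1}\setminus\{0\}$ of $f^{n_j}\circ\phi|_V$ converging to a lift $\Psi$ of $\psi$. Writing $F^{n_j}\circ\Phi_V=\alpha_j\widetilde{\Phi}_j$ with $\alpha_j$ holomorphic and nowhere zero, and using that $\log\|\widetilde{\Phi}_j\|$ is bounded on compact subsets of $V$, one obtains
\begin{equation*}
d^{-n_j}\log|\alpha_j| \;=\; d^{-n_j}\log\|F^{n_j}\circ\Phi_V\| \;-\; d^{-n_j}\log\|\widetilde{\Phi}_j\| \;\longrightarrow\; u_V
\end{equation*}
locally uniformly on $V$. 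Each $\log|\alpha_j|$ is pluriharmonic, so the uniform limit $u_V$ is pluriharmonic as well.

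For $(3)\Rightarrow(4)$, pluriharmonicity of $u_V$ lets one locally write $u_V=\operatorname{Re}(h)$ for a holomorphic function $h$; replacing $\Phi_V$ by $e^{-h}\Phi_V$ gives a lift $\Phi$ on which $G$ vanishes identically. Finally, for $(4)\Rightarrow(1)$, the identity $G\circ F=dG$ forces $F^n\circ\Phi$ to take values in $\{G=0\}$, which is compact in $\mathbb{C}^{k+1}\setminus\{0\}$ by the preliminary step; Montel's theorem then gives normality of $\{F^n\circ\Phi\}$, and composing with $\pi$ yields normality of $\{f^n\circ\phi\}$ near the chosen base point. Since normality is local and the base point was arbitrary, $\phi$ is a Fatou map.

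\textbf{Expected obstacle.} The delicate step is $(2)\Rightarrow(3)$: one must produce the holomorphic lifts $\widetilde{\Phi}_j$ of $f^{n_j}\circ\phi|_V$ converging to a common limit $\Psi$, so that the comparison with $F^{n_j}\circ\Phi_V$ isolates a sequence of genuinely \emph{pluriharmonic} functions approximating $u_V$. Once this is in place, the other implications follow from a clean potential-theoretic/compactness argument, in which the compactness of $\{G=0\}$ is the crucial geometric input for $(4)\Rightarrow(1)$.
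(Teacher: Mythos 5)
The paper itself does not prove this statement; it is quoted from Ueda (\cite{uedacritically}, Proposition 2.1) without an argument, so there is no paper proof to compare against line by line. Your self-contained proof is correct, and it follows the standard Green-function route that Ueda's result is built on. A few remarks on the content. Your preliminary reduction --- noting that a change of lift replaces $u_V=G\circ\Phi_V$ by $u_V+\log|\alpha|$ with $\alpha$ nowhere-vanishing holomorphic, so that both (3) and (4) are intrinsic to $\phi|_V$ --- is exactly what makes the local shrinking in $(2)\Rightarrow(3)$ and $(3)\Rightarrow(4)$ harmless. In $(2)\Rightarrow(3)$ the comparison of the two lifts $F^{n_j}\circ\Phi_V$ and $\widetilde{\Phi}_j=s\circ(f^{n_j}\circ\phi)$ (with $s$ a holomorphic section of $\pi$ over an affine chart containing $\psi(V)$) correctly produces a nowhere-vanishing holomorphic $\alpha_j$, and the estimate
\[
d^{-n_j}\log|\alpha_j| \;=\; G_{n_j}\circ\Phi_V \;-\; d^{-n_j}\log\|\widetilde{\Phi}_j\| \;\longrightarrow\; G\circ\Phi_V
\]
locally uniformly, since $G_{n_j}\to G$ locally uniformly and $\log\|\widetilde{\Phi}_j\|$ is locally bounded (as $\widetilde{\Phi}_j\to\Psi$ with values in $\mathbb{C}^{k+1}\setminus\{0\}$), does exhibit $u_V$ as a uniform limit of pluriharmonic functions. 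In $(4)\Rightarrow(1)$ the compactness of $\{G=0\}$, which follows from the homogeneity $G(\lambda z)=G(z)+\log|\lambda|$ and continuity of $G$ on the unit sphere, is indeed the right geometric input to trigger Montel for $\{F^n\circ\Phi\}$ and then descend via $\pi$. The only small point you leave implicit, which is worth flagging, is that in $(3)\Rightarrow(4)$ one also needs a lift to exist at all near a given $x$; this follows for the same reason as in $(2)\Rightarrow(3)$, by shrinking so that $\phi(V)$ lies in an affine chart and composing with a section of $\pi$. With that remark, the cycle $(1)\Rightarrow(2)\Rightarrow(3)\Rightarrow(4)\Rightarrow(1)$ is complete and correct.
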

\medskip
\par Next we introduce the conception of points with bounded ramification, first introduced by Ueda in \cite{veda1998critical} Definition 4.5.
\begin{definition}
Let $f$ be a holomorphic endomorphism on $\mathbb{P}^{k}$ of degree $\geq 2.$ A point $q$ is said to be a point of bounded ramification if the following conditions are satisfied:
\medskip
\par (1) there exists a neighborhood $W$ of $q$ such that $PC(f) \cap W$ is an analytic subset of $W$.
\medskip
\par (2) there exists an integer $m$ such that for every $j>0$ and every $p \in f^{-j}(q),$ we have that ord $\left(f^{j}, p\right) \leq m$.
\end{definition} 
\medskip
In the case $k=2$, we have the following characterization of points of bounded ramification for PCF maps on $\mathbb{P}^{2}$. (See Ueda \cite{veda1998critical} Lemma 5.7.)
\begin{lemma}
Let $f$ be a PCF map on $\mathbb{P}^{2}$ of degree $\geq 2.$ Then the points with unbounded ramification are the union of critical component cycles and critical point cycles.
\end{lemma}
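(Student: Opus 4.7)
Since $f$ is PCF, $PC(f)$ is already algebraic, so condition~(1) of Definition~2.3 is automatic at every point of $\mathbb{P}^2$, and the lemma reduces to analyzing when condition~(2) fails. By the chain rule $\operatorname{ord}(f^j,p)=\prod_{i=0}^{j-1}\operatorname{ord}(f,f^i(p))$ together with the equivalence $\operatorname{ord}(f,x)\geq 2 \iff x\in C(f)$, my plan is to show that $q$ has unbounded ramification if and only if $q$ admits backward orbits accumulating arbitrarily many passages through $C(f)$, and then to identify this in turn with membership of $q$ in a critical point cycle or a critical component cycle.

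The forward implication is direct. If $q=f^i(x_0)$ for a periodic critical point $x_0$ of period $n$, then $x_0\in f^{-(jn+i)}(q)$ and $\operatorname{ord}(f^{jn+i},x_0)\geq\operatorname{ord}(f,x_0)^{j}\geq 2^{j}$ for every $j\geq 1$. If $q=f^{i}(q_0)$ with $q_0\in\Lambda$ on a periodic critical component $\Lambda$ of period $n$, I would use surjectivity of the dominant morphism $f^{n}|_{\Lambda}\colon\Lambda\to\Lambda$ to pick inductively $p_j\in\Lambda$ with $f^{jn}(p_j)=q_0$; since $\Lambda\subset C(f)$ is visited every $n$ steps along the forward orbit of $p_j$, this yields $\operatorname{ord}(f^{jn+i},p_j)\geq 2^{j}$.

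For the converse, starting from $p_N\in f^{-j_N}(q)$ with $\operatorname{ord}(f^{j_N},p_N)\to\infty$, I would use uniform boundedness of $\operatorname{ord}(f,\cdot)$ on $\mathbb{P}^2$ to conclude that the orbit from $p_N$ to $q$ hits $C(f)$ a number of times tending to infinity with $N$. A K\"onig-type diagonal extraction along the (finitely branched) preimage tree of $q$ then produces a full infinite backward orbit $(q_i)_{i\geq 0}$ with $q_0=q$, $f(q_{i+1})=q_i$, and $q_i\in C(f)$ for every $i$ in some infinite set $S$. Since $C(f)$ has finitely many irreducible components, pigeonhole yields an infinite $S'\subset S$ and an irreducible component $C^{*}\subset C(f)$ with $q_i\in C^{*}$ for all $i\in S'$. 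If $\{q_i:i\in S'\}$ is finite, some $q_{*}=q_{i_1}=q_{i_2}$ with $i_1<i_2$ satisfies $f^{i_2-i_1}(q_{*})=q_{*}$ and lies in $C^{*}\subset C(f)$, so $q=f^{i_1}(q_{*})$ sits on its critical point cycle.

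Otherwise $\{q_i:i\in S'\}$ is infinite, and I would prove that $C^{*}$ is periodic in the component dynamics by contradiction. Assuming $f^k(C^{*})\neq C^{*}$ for all $k\geq 1$, the forward orbit $\{f^k(C^{*}):k\geq 1\}$ takes only finitely many values among components of $PC(f)$, so $\bigcup_{k\geq 1}\bigl(C^{*}\cap f^k(C^{*})\bigr)$ is a finite union of finite intersections of distinct irreducible algebraic curves in $\mathbb{P}^2$, hence finite. Yet each pair $i_1<i_2$ in $S'$ produces $q_{i_1}\in C^{*}\cap f^{i_2-i_1}(C^{*})$, so $\{q_i:i\in S'\}$ would lie in this finite set, contradicting its infinity. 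Therefore $C^{*}$ is a periodic critical component and $q=f^{i}(q_i)\in f^{i}(C^{*})$ lies in its critical component cycle. The main technical subtlety is the K\"onig-type extraction producing an infinite backward orbit that visits $C(f)$ infinitely often; once this is in hand, the finite-component pigeonhole together with the elementary fact that two distinct irreducible curves in $\mathbb{P}^2$ meet in finitely many points closes the argument.
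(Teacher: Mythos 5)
The paper does not prove this lemma; it is quoted from Ueda \cite{veda1998critical}, Lemma 5.7, so I am judging your argument on its own merits. Your forward implication is correct: the multiplicativity of $\operatorname{ord}$ along orbits, the equivalence $\operatorname{ord}(f,x)\geq 2\iff x\in C(f)$, and the surjectivity of $f^{n}|_{\Lambda}$ together give $\operatorname{ord}(f^{jn+i},\cdot)\geq 2^{j}$ at suitable preimages, so every point of a critical point cycle or a critical component cycle has unbounded ramification.

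The converse has a genuine gap at the K\"onig step. From finite backward orbits $p_N\to q$ whose critical-hit count $c(p_N)$ tends to infinity, K\"onig's lemma does produce an infinite backward orbit, but it does not produce one that visits $C(f)$ infinitely often. The standard extraction gives an infinite path through the ``heavy'' nodes (those with descendants of arbitrarily large $c$), yet $c$ may be eventually constant along every such path, with the large $c$-values living on side branches that are discarded at each step. Concretely, in an abstract finitely-branched tree one can arrange an infinite non-critical backbone $b_0,b_1,\dots$ where each $b_n$ also spawns a finite critical spike of length $n$ followed by a non-critical tail: then $\sup_p c(p)=\infty$, every infinite path has $c$ eventually constant, and the heavy path is the critical-free backbone. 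So the property you need is not a combinatorial consequence of unbounded ramification; it holds here only because the lemma is true, which you cannot assume.

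The good news is that the two ideas you deploy after the extraction — pigeonhole over the finitely many components of $C(f)$, and finiteness of the intersection of two distinct irreducible curves in $\mathbb{P}^2$ — already suffice when applied directly to the \emph{finite} backward orbits, so no infinite extraction is needed. Fix an irreducible component $C^{*}$ of $C(f)$. If $C^{*}$ is periodic and some backward orbit from $q$ hits $C^{*}$ at time $a<j$, then $q=f^{\,j-a}(f^{a}(p))\in f^{\,j-a}(C^{*})$ lies on the critical component cycle of $C^{*}$; hence if $q$ is not on that cycle, no backward orbit hits $C^{*}$ at all. If $C^{*}$ is not periodic, then for hits at $a_1<\cdots<a_m$ the points $f^{a_2}(p),\dots,f^{a_m}(p)$ all lie in $C^{*}\cap f^{a_i-a_1}(C^{*})$ and hence in the finite set $F^{*}:=\bigcup_{k\geq1}\bigl(C^{*}\cap f^{k}(C^{*})\bigr)$; if $m-1>|F^{*}|$, the pigeonhole forces $f^{a_i}(p)=f^{a_{i'}}(p)$ for some $i<i'$, producing a periodic critical point whose forward orbit contains $q$, i.e.\ $q$ lies on a critical point cycle. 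Therefore, if $q$ belongs to no critical point cycle and no critical component cycle, the number of visits to each $C^{*}$ along every finite backward orbit is uniformly bounded; summing over the finitely many components of $C(f)$ bounds $c(p)$, and hence $\operatorname{ord}(f^{j},p)$, uniformly. This is exactly the computation in your last two paragraphs, redirected at finite orbits instead of a hypothetical infinite one.
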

\medskip
\par Next we introduce the following abstract result of Ueda. (See \cite{veda1998critical} Lemma 3.7 and Lemma 3.8.)
\begin{lemma}
Let $X$ be a complex manifold and $D$ an analytic subset of $X$ of codimension 1. Let $x\in X$ and let $X_0$ be a compact neighborhood of $x$ such that the pair $(X_0, X_0\cap D)$ can be triangulated, i.e. there exist simplicial complex $K$ and its subcomplex $L$ whose underlying spaces are $X_0$ and $X_0\cap D$. Let $W$ be the open star of $x$ with respect to the complex $K$. Then for every integer $m\geq 0$, there exists an irreducible normal complex space $Z$ and $\eta:Z\to W$ holomorphic such that:
\medskip
\par (1) $\eta$ is $m$-universal, in the sense that for every $D \cap W$-branched holomorphic covering $h: Y \rightarrow W$ (i.e. the ramification locus of $h$ is contained in $D \cap W$) with sheet number $\leq m,$ there exists a holomorphic map $\gamma: Z \rightarrow Y$ such that $h \circ \gamma=\eta .$ 
\medskip
\par (2) If $h: Y \rightarrow W$ is a $D \cap W$ branched holomorphic covering, then $h^{-1}(x)$ is a single point.
\end{lemma}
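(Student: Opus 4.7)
The plan is to construct $\eta: Z \to W$ as the analytic extension of an appropriate finite Galois \emph{topological} covering of $W \setminus D$, and then to deduce both statements from general covering-space theory together with the cone structure of the simplicial open star.

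First, I would exploit the triangulation of $(X_0, X_0 \cap D)$ to control the homotopy type of $W$ and $W \setminus D$. Since $W$ is the open star of the vertex $x$ in $K$, it is star-shaped from $x$ along simplicial rays, hence contractible; the subcomplex $L$ triangulating $X_0 \cap D$ implies that $W \setminus D$ deformation retracts onto a finite subcomplex, so $\pi_1(W \setminus D)$ is finitely presented, with meridian loops around the codimension-one strata of $D \cap W$ as generators.

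Next, finite generation guarantees that for any fixed $m$ there are only finitely many subgroups of $\pi_1(W \setminus D)$ of index at most $m$ (by counting homomorphisms into $S_m$). Let $N_m$ be their intersection, a normal subgroup of finite index, and let $\widetilde{Z} \to W \setminus D$ be the corresponding finite connected Galois topological cover; it plays the role of a ``universal cover among unramified covers of degree $\leq m$.'' The Grauert--Remmert extension theorem for finite holomorphic coverings across codimension-one analytic subsets then produces a unique extension of $\widetilde{Z}$ to a finite surjective holomorphic map $\eta: Z \to W$ with $Z$ irreducible and normal, branched only over $D \cap W$. Property (1) now follows formally: given a branched cover $h: Y \to W$ of sheet number $\leq m$, its restriction to $W \setminus D$ is an unramified cover corresponding to a subgroup $H \subset \pi_1(W \setminus D)$ of index $\leq m$; since $N_m \subset H$ by construction, the universal property of $\widetilde{Z}$ provides a holomorphic lift $\widetilde{Z} \to h^{-1}(W \setminus D)$, and normality of $Z$ together with finiteness of $h$ extends this across $\eta^{-1}(D \cap W)$ by a Riemann-type argument, yielding $\gamma: Z \to Y$ with $h \circ \gamma = \eta$.

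The main obstacle will be property (2), which is a topological statement about \emph{every} branched cover, independent of $m$. The mechanism I would use is that the open star $W$ is the topological cone on the link of $x$ with apex $x$, so any irreducible finite branched cover $h: Y \to W$ must itself be a cone over its restriction to a small topological sphere around $x$, and the apex of a connected cone is a single point. Rigorously, I would use the simplicial retraction of $W \setminus \{x\}$ onto the link of $x$ to express $\pi_1(W \setminus D)$ in terms of monodromy on a link-level cover, and then combine Stein factorization with irreducibility of $Y$ to conclude that $h^{-1}(x)$ reduces to one point. The delicate case is when $D$ is singular at $x$; here, passing to a sufficiently fine barycentric subdivision of $K$ should make the required topological retractions go through.
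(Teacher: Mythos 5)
The paper does not prove Lemma 2.5; it is quoted verbatim from Ueda \cite{veda1998critical} (Lemmas 3.7 and 3.8), so there is no in-text proof to compare against. Your outline is a reasonable reconstruction of the standard argument and, as far as I can tell, matches the spirit of Ueda's. Steps toward (1) are correct: finite generation of $\pi_1(W\setminus D)$ (finite generation, not finite presentation, is all you need) yields finitely many index-$\leq m$ subgroups; their intersection $N_m$ gives a connected finite Galois cover $\widetilde{Z}\to W\setminus D$; Grauert--Remmert produces the normal, irreducible extension $\eta:Z\to W$; and given $h:Y\to W$ of sheet number $\leq m$, the inclusion $N_m\subset H$ gives a lift $Z\setminus\eta^{-1}(D)\to Y$, which extends across $\eta^{-1}(D)$ by normality of $Z$ and local boundedness coming from the finiteness of $h$.

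For (2) your intuition is right but the phrasing ``$Y$ must itself be a cone'' is not quite what one uses. The clean mechanism is: since $W$ is the open star of the vertex $x$ and $D\cap W$ is a union of open simplices of the subcomplex $L$, the radial (cone) contraction of $W$ to $x$ preserves $W\setminus D$, so for any small cone neighborhood $W'\subset W$ of $x$ the inclusion $W'\setminus D\hookrightarrow W\setminus D$ is a homotopy equivalence. The fiber $h^{-1}(x)$ of the Fox/Grauert--Remmert completion is by definition in bijection with the connected components of $h^{-1}(W'\setminus D)$ for $W'$ in a cofinal system of such neighborhoods; by the homotopy equivalence this equals the number of components of $h^{-1}(W\setminus D)=Y\setminus h^{-1}(D)$, which is one because $Y$ is irreducible (normal and connected) and $h^{-1}(D)$ is a proper analytic subset. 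Two small corrections to your last paragraph: no barycentric subdivision is needed, since the simplicial pair $(K,L)$ already makes the radial retraction respect $D$ even when $D$ is singular at $x$; and Stein factorization is not the relevant tool, as irreducibility of $Y$ does the job directly.
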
 
\medskip
\par Specializing to holomorphic endomorphisms on $\mathbb{P}^k$, we get the following corollary:
\begin{corollary}

Let $f$ be a holomorphic endomorphism on $\mathbb{P}^{k}$ of degree $\geq 2.$ Let $x\in \mathbb{P}^k$ be a point with bounded ramification. Then for every neighborhood $W$ of $x$ satisfying the assumption in Lemma 2.5, there exists an irreducible normal complex space $Z,$ and $\eta: Z \rightarrow W$ a $PC(f) \cap W$ branched holomorphic covering map such that if $W_{n}$ denotes a connected component of $f^{-n}(W)$, then there exists a holomorphic map $g_{n}: Z \rightarrow W_{n}$ such that $f^{n} \circ g_{n}=\eta$, i.e. the following diagram is commute.

\[ \begin{tikzcd}
Z \arrow{r}{\eta} \arrow[swap]{d}{g_n} & W\\%
W_n\arrow{ru}{f^n}
\end{tikzcd}
\]
\end{corollary}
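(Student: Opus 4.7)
The plan is to apply Lemma 2.5 directly with the ambient data $X=\mathbb{P}^k$ and $D=PC(f)$, taking the integer $m$ to be the bound on the ramification order at $x$ supplied by condition (2) of Definition 2.3. Lemma 2.5 then produces an irreducible normal complex space $Z$ together with a $PC(f)\cap W$-branched holomorphic covering $\eta:Z\to W$ that is $m$-universal. The remaining task is to exhibit, for each $n\geq 1$ and each connected component $W_n$ of $f^{-n}(W)$, a lift $g_n$ of $\eta$ through $f^n|_{W_n}$; this will follow from $m$-universality as soon as one verifies that $f^n|_{W_n}:W_n\to W$ fits the hypotheses of Lemma 2.5(1).

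I would verify these hypotheses in turn. First, $f^n|_{W_n}$ is a finite surjective holomorphic map, hence a branched covering, and its branch locus in $W$ equals $f^n(W_n\cap C(f^n))\subset \bigcup_{i=1}^{n}f^i(C(f))\cap W\subset PC(f)\cap W$ by the chain rule $C(f^n)=\bigcup_{i=0}^{n-1}f^{-i}(C(f))$ together with surjectivity of $f$. Second, the sheet number of $f^n|_{W_n}$ is at most $m$. This is where the bounded ramification hypothesis on $x$ enters: after shrinking $W$ (equivalently, choosing a sufficiently fine triangulation in Lemma 2.5), each connected component $W_n$ of $f^{-n}(W)$ contains exactly one preimage $p\in f^{-n}(x)$, and so the sheet number of $f^n|_{W_n}$ is the local degree $\text{ord}(f^n,p)$, which is $\leq m$ by the very definition of bounded ramification. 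With both hypotheses checked, the $m$-universal property of $\eta$ provides a holomorphic $\gamma:Z\to W_n$ with $f^n|_{W_n}\circ\gamma=\eta$; setting $g_n:=\gamma$ yields the asserted commutative diagram.

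The main technical obstacle I anticipate is the clean arrangement of a single neighborhood $W$ of $x$ for which, simultaneously for every $n\geq 1$, each connected component of $f^{-n}(W)$ contains a single preimage of $x$. Separating finitely many preimages is routine for each fixed $n$ by properness of $f^n$; the subtlety is to rule out, uniformly in $n$, that monodromy around the branch locus $PC(f)\cap W$ merges two distinct preimages into a single component of $f^{-n}(W)$ as $n\to\infty$. This is exactly where the bounded ramification hypothesis (as opposed to mere finiteness of $f^n$) does the work, by bounding how the inverse branches of $f^n$ can ramify over $W$. Once this separation is secured, the rest is a direct invocation of Lemma 2.5.
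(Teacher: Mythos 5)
Your overall strategy matches the paper's: feed $X=\mathbb{P}^k$, $D=PC(f)$, and the bound $m$ from bounded ramification into Lemma 2.5, then use $m$-universality of $\eta$ against $f^n|_{W_n}:W_n\to W$. However, there is a genuine gap in how you secure the single-preimage property, and you mislocate which hypothesis carries that weight. You assert that bounded ramification is ``exactly where the work is done'' in preventing two preimages of $x$ in $f^{-n}(x)$ from being merged by monodromy into one component $W_n$, but bounded ramification does no such thing: it only bounds local degrees, and nothing stops a component from containing, say, two preimages each of small local degree. Your suggestion to ``shrink $W$'' also fights the statement, which asks for the conclusion for \emph{every} $W$ satisfying the hypothesis of Lemma 2.5, with a single $Z$ and $\eta$ working for all $n$ simultaneously.

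The missing ingredient is Lemma 2.5(2), which you never invoke: for $W$ the open star of $x$, \emph{any} $D\cap W$-branched holomorphic covering $h:Y\to W$ satisfies that $h^{-1}(x)$ is a single point. This is a topological property of the open star (no input from bounded ramification, no shrinking), and applying it to $h=f^n|_{W_n}$ gives immediately that $W_n\cap f^{-n}(x)$ is a single point $p$. Only then does bounded ramification enter: the sheet number of $f^n|_{W_n}$ equals $\mathrm{ord}(f^n,p)\leq m$, so Lemma 2.5(1) applies and produces $g_n$. With Lemma 2.5(2) in place, your second paragraph becomes correct and your anticipated obstacle evaporates.
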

The map $g_n$ constructed above can be seen as a kind of inverse branch of $f^n$.
\begin{proof}
Since $x$ has bounded ramification, there exists $m\geq 0$ such that ord $\left(f^{n}, x_n\right) \leq m$ for every $x_n\in f^{-n}(x)$ and $n\geq 0$. Take $X=\mathbb{P}^k$ and $D=PC(f)$ in Lemma 2.5. Then for $W$ satisfying the assumption in Lemma 2.5, there exists an irreducible normal complex space $Z,$ and $\eta: Z \rightarrow W$ a $PC(f) \cap W$ branched holomorphic covering map, satisfy the two conclusion of Lemma 2.5. Let $W_{n}$ denote a connected component of $f^{-n}(W)$, then by Lemma 2.5 (2), $W_n\cap f^{-n}(x)$ contains a single point, thus $f^n:W_n\to W$ has sheet number $\leq m$. By Lemma 2.5 (1), there exists a holomorphic map $g_{n}: Z \rightarrow W_{n}$ such that $f^{n} \circ g_{n}=\eta$. Thus the proof is complete.
\end{proof}
\medskip
\par The sequence $\left\{g_{v}\right\}$ defined in Corollary 2.6 is in fact normal, and any limit map of $\left\{g_{v}\right\}$ is also a Fatou map, by the following result of Ueda (\cite{uedacritically} Theorem 2.4 ). 
\begin{lemma}
Let $f$ be a holomorphic endomorphism on $\mathbb{P}^{k}$ of degree $\geq 2 .$ Let $Z$ be a complex analytic space and $h: Z \rightarrow \mathbb{P}^{k}$ a holomorphic map, for every integer $n$ let $g_{n}: Z \rightarrow \mathbb{P}^{k}$ be the holomorphic map such that $f^{n} \circ g_{n}=h .$ Then $\left\{g_{n}\right\}$ is a normal family. Further more if $\phi$ is a limit map of a subsequence of $\left\{g_{n}\right\},$ then $\phi$ is a Fatou map.
\end{lemma}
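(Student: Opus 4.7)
The plan is to reduce both the normality of $\{g_n\}$ and the Fatou property of any limit to criterion (4) of Theorem 2.2, which characterizes Fatou maps via the dynamical Green function $G$. Let $F$ denote the homogeneous lift of $f$ to $\mathbb{C}^{k+1}$. The key identity is $G\circ F^n = d^n\,G$, so if one can arrange the exact equality $F^n\circ \Gamma_n = H$ on a small open set (where $H$ is a holomorphic lift of $h$ and $\Gamma_n$ is a chosen lift of $g_n$), then $G\circ \Gamma_n = d^{-n}(G\circ H)$, which tends to $0$ locally uniformly.

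I would first fix $z_0\in Z$ and choose a simply-connected neighborhood $V$ on which $h$ admits a holomorphic lift $H:V\to\mathbb{C}^{k+1}\setminus\{0\}$. Take any local lift $\widetilde \Gamma_n$ of $g_n|_V$; then $F^n\circ \widetilde \Gamma_n=\lambda_n H$ for some nonvanishing holomorphic $\lambda_n$ on $V$. Using that $V$ is simply connected, extract a holomorphic $d^n$-th root $\mu_n$ of $1/\lambda_n$ and set $\Gamma_n:=\mu_n\widetilde \Gamma_n$. Since $F^n$ is homogeneous of degree $d^n$, this yields $F^n\circ \Gamma_n = H$ exactly.

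Applying $G$ to both sides gives $G\circ \Gamma_n = d^{-n}(G\circ H)$, which tends to $0$ locally uniformly on $V$. Since $G - \log\|\cdot\|$ is bounded on $\mathbb{C}^{k+1}\setminus\{0\}$, this forces $\{\Gamma_n\}$ to stay in a compact subset of $\mathbb{C}^{k+1}\setminus\{0\}$ over each compact of $V$, so Montel's theorem produces, from any subsequence, a further locally uniformly convergent one $\Gamma_{n_j}\to \Phi$. Projecting down via $\pi$ gives normality of $\{g_n\}$; and since the limit satisfies $G\circ\Phi\equiv 0$, criterion (4) of Theorem 2.2 implies that $\phi:=\pi\circ\Phi$ is a Fatou map.

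The main technical point is the renormalization step: one must extract a $d^n$-th root of a varying nonvanishing holomorphic function while keeping the whole construction holomorphic, which works precisely because $V$ is chosen simply connected and $\lambda_n$ never vanishes. A mild secondary issue is that $Z$ is only a complex analytic space, not a manifold; but normality is a local property and the above argument applies on a neighborhood of any point of $Z$, smooth or singular, so this causes no real difficulty.
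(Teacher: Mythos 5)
The paper gives no proof of this lemma---it is quoted from Ueda (\cite{uedacritically}, Theorem 2.4)---so your argument must be judged on its own terms, and it is correct. The key steps all check out: writing $F^n\circ\widetilde\Gamma_n=\lambda_n H$ with $\lambda_n$ holomorphic and nonvanishing follows because $\pi\circ(F^n\circ\widetilde\Gamma_n)=f^n\circ g_n=h=\pi\circ H$; the renormalization $\Gamma_n=\mu_n\widetilde\Gamma_n$ with $\mu_n^{d^n}=1/\lambda_n$ uses precisely the homogeneity of $F^n$ of degree $d^n$ and the existence of holomorphic roots of a nonvanishing function on a contractible (hence simply connected) Stein neighborhood $V$; the identity $G\circ F^n=d^n G$ then gives $G\circ\Gamma_n=d^{-n}(G\circ H)\to 0$ locally uniformly; boundedness of $G-\log\|\cdot\|$ traps $\Gamma_n(K)$ in a compact annulus of $\mathbb{C}^{k+1}\setminus\{0\}$ for each compact $K\subset V$, so Montel applies; projecting the limit $\Phi$ by $\pi$ gives local normality (global normality then follows from the usual diagonal argument over a countable cover), and $G\circ\Phi\equiv 0$ feeds into criterion~(4) of Theorem~2.2 to make the limit a Fatou map. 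Your remark about $Z$ being possibly singular is handled as you say: every point of a reduced complex space has a fundamental system of contractible Stein neighborhoods, over which line bundles are trivial (so the lifts $H$, $\widetilde\Gamma_n$ exist) and over which Montel's theorem for locally bounded families of holomorphic maps continues to hold. This is the natural argument given the machinery already set up in Theorem~2.2, and it matches Ueda's approach in spirit.
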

\medskip
\par The following lemma is implicitly contained in \cite{uedacritically} Theorem 4.2, which will be important in our proof of the main results.
\begin{lemma}
Let $f$ be a holomorphic endomorphism on $\mathbb{P}^{k}$ of degree $\geq 2$. Let $K$ be compact connected subset of $\mathbb{P}^k$ such that $\left\{f^n|_K\right\}$ contains a uniformly convergent subsequence and $K$ is not a single point . let $\phi$ be the limit map of the uniformly convergent subsequence of $f^n|_K$. Then
\medskip
\par (1) If $\phi$ is a constant map, then $x:=\phi(K)$ is a point of unbounded ramification.
\medskip
\par (2) If $\phi$ is not a constant map and there exists $x\in \phi(K)$ which is a point of bounded ramification.  Then there exists a  Siegel variety $Y$ containing $x$, i.e. an irreducible analytic set of positive dimension such that there is a subsequence $\left\{ n_j\right\}$ such that $f^{n_j}|_{Y}\to Id$.
\end{lemma}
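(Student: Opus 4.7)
My plan is to set both parts in a common framework based on Corollary 2.6. Fix a point $x$ of bounded ramification (namely $x := \phi(K)$ in part (1), or the given $x \in \phi(K)$ in part (2)) and a neighborhood $W$ of $x$ to which Corollary 2.6 applies. This yields an irreducible normal complex space $Z$ and a $PC(f)\cap W$-branched covering $\eta : Z \to W$; by Lemma 2.5 (2), $\eta^{-1}(x) = \{z_0\}$ is a single point. After suitable extraction, for each $n_j$ the relevant portion of $K$ will lie in a single connected component $W_{n_j}$ of $f^{-n_j}(W)$, yielding $g_{n_j} : Z \to W_{n_j}$ with $f^{n_j} \circ g_{n_j} = \eta$ and $g_{n_j}$ surjective onto $W_{n_j}$. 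By Lemma 2.7 the family $\{g_{n_j}\}$ is normal, so after a further extraction $g_{n_j} \to \psi$ locally uniformly on $Z$, where $\psi : Z \to \mathbb{P}^k$ is a Fatou map.

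For part (1), I argue by contradiction: suppose $x := \phi(K)$ has bounded ramification. Since $\phi \equiv x$, uniform convergence gives $f^{n_j}(K) \subset W$ for $j$ large, and connectedness of $K$ places $K$ inside a single $W_{n_j}$. For each $y \in K$ choose $z_j \in Z$ with $g_{n_j}(z_j) = y$; then $\eta(z_j) = f^{n_j}(y) \to x$, and the finite-sheetedness of $\eta$ together with $\eta^{-1}(x) = \{z_0\}$ force $z_j \to z_0$. Uniform convergence $g_{n_j} \to \psi$ then yields $\psi(z_0) = y$, and since this holds for every $y \in K$ one gets $K = \{\psi(z_0)\}$, contradicting the assumption that $K$ is not a single point.

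For part (2), I first localize: fix $y \in K$ with $\phi(y) = x$, and by the boundary-bumping theorem of continuum theory (applied to a small ball around $y$ in $K$ contained in $\phi^{-1}(W)$) extract a nontrivial compact connected $K_0 \subset K$ with $y \in K_0$ and $\phi(K_0) \subset W$. The construction above then gives $g_{n_j} : Z \to W_{n_j}$ and a Fatou limit $\psi$; rerunning the argument of part (1) on each $y' \in K_0$ yields $K_0 \subset \psi(Z)$, so $\psi$ is non-constant. The Siegel variety $Y$ through $x$ is then obtained along the lines of Ueda's proof of Theorem 4.2 in \cite{uedacritically}: exploiting the non-constancy of $\psi$, the normality of $\{f^n \circ \psi\}$ (Fatou property of $\psi$), and the diagonal identity $f^{n_j} \circ g_{n_j} = \eta$, one extracts a further subsequence $\{N_l\}$ along which $f^{N_l}$ converges to the identity on an irreducible analytic variety through $x$.

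The main obstacle is precisely this last step in part (2): producing a Siegel variety through $x$ rather than merely a nontrivial analytic variety through the preimage $y$. Its resolution requires a careful analysis of how $\eta$ and $\psi$ interact under iteration, via the identity $f^{n_j} \circ g_{n_j} = \eta$ and the Fatou-map equicontinuity of $\{f^n \circ \psi\}$, and it mirrors the arguments in the proof of Theorem 4.2 of \cite{uedacritically}.
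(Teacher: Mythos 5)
Your proposal follows essentially the same approach as the paper: the paper's own proof of Lemma 2.8 is a terse pointer to the proof of Theorem 4.2 in Ueda's \emph{uedacritically}, with the instruction to replace Ueda's global branched cover $\xi:X\to\mathbb{P}^k$ by the local $\eta:Z\to W$ of Corollary 2.6 and to study limits of the local inverse branches $g_j$ satisfying $f^{n_j}\circ g_j=\eta$, which is exactly the framework you set up. Your fleshed-out contradiction for part (1) (using $\eta^{-1}(x)=\{z_0\}$, properness of $\eta$, and equicontinuity of $\{g_{n_j}\}$ to force $\psi(z_0)=y$ for every $y\in K$), the boundary-bumping localization to $K_0$, and the non-constancy of $\psi$ in part (2) are all sound, and the remaining step you flag — upgrading the non-constant Fatou map $\psi$ (which passes through the preimage $y$) to an honest Siegel variety through $x$ — is precisely the part that both you and the paper delegate to Ueda's Theorem 4.2.
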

\begin{proof}
The proof is contained in the proof of \cite{uedacritically} Theorem 4.2, except at one point one should replace the branched covering map $\xi:X\to \mathbb{P}^k$ in \cite{uedacritically} by the branched covering $\eta:Z\to W$,  where $W$ is a neighborhood of $x$, and $\eta:Z\to W$ is the branched covering constructed in Corollary 2.6. The idea is to  construct  locally uniformly convergent subsequences in the following style:  $\left\{ g_{j}:Z\to \mathbb{P}^k\right\}$ such that $f^{n_j}\circ g_j=\eta$. The property of the limit map  of well-chosen $\left\{ g_j\right\}$ will either give a contradiction of (1), or give the desired Fatou map of (2).
\end{proof}
\medskip
\par We end this section by showing the following finiteness result.
\begin{lemma}
	Let $f$ be a PCF map on $\mathbb{P}^{2}$ of degree $\geq 2$. Then the set of  critical point cycles is a finite set.
\end{lemma}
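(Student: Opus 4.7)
My plan is to classify periodic critical points by which irreducible component of $C(f)$ they sit on, and to rule out the non-periodic components by a simple algebraic intersection argument.

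Write $C(f)=\Gamma_{1}\cup\cdots\cup\Gamma_{N}$ as a union of irreducible components. If $x$ is a periodic critical point lying on a periodic critical component $\Lambda$ of period $p$ (so $f^{p}(\Lambda)=\Lambda$), then $f^{i}(x)\in f^{i}(\Lambda)$, and the whole orbit of $x$ is contained in $\bigcup_{m=0}^{p-1}f^{m}(\Lambda)$, i.e.\ in the critical component cycle of $\Lambda$. Such points therefore produce orbits already absorbed by critical component cycles, and it suffices to bound the periodic critical points on the non-periodic components of $C(f)$.

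Fix such a non-periodic $\Gamma_{i}$. Because $f$ is PCF, $PC(f)$ is algebraic and therefore has only finitely many irreducible components. Since $f\colon\mathbb{P}^{2}\to\mathbb{P}^{2}$ is a finite map, each $f^{n}(\Gamma_{i})$ for $n\ge 1$ is an irreducible algebraic curve contained in $f(C(f))\subset PC(f)$, so it coincides with one of those finitely many components. Hence the family $\{f^{n}(\Gamma_{i})\}_{n\ge 1}$ comprises only finitely many distinct curves. Now if $x\in\Gamma_{i}$ satisfies $f^{n}(x)=x$, then $x\in\Gamma_{i}\cap f^{n}(\Gamma_{i})$; since $\Gamma_{i}$ is not periodic we have $f^{n}(\Gamma_{i})\neq\Gamma_{i}$, and the intersection of two distinct irreducible curves in $\mathbb{P}^{2}$ is finite. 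Taking the union over the finitely many distinct curves $f^{n}(\Gamma_{i})$ produces only finitely many $f$-periodic points on $\Gamma_{i}$, and summing over the finitely many non-periodic components of $C(f)$ concludes the argument.

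The only subtle point I expect is the convention for the term ``critical point cycle'': I read it in the sense of Ueda \cite{veda1998critical}, Lemma 5.7, as excluding cycles already contained in critical component cycles; otherwise the restriction of $f$ to a periodic critical component (a PCF rational map of degree $\ge 2$) would furnish infinitely many periodic points and the lemma would be false as stated. Once this convention is fixed the argument is essentially formal, relying only on the algebraicity of $PC(f)$, the finiteness of $f$, and the fact that two distinct irreducible curves in $\mathbb{P}^{2}$ meet in finitely many points.
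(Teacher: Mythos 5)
Your proof is correct, and your reading of the statement is the intended one: the paper's own proof concludes only that there are finitely many critical point cycles not already contained in critical component cycles, and Lemma~2.9 is invoked later (in the proof of Lemma~4.3) in exactly that qualified form. Your route differs from the paper's in the organizing intersection. The paper lets $V$ be the union of the periodic irreducible components of $PC(f)$, argues that a periodic critical point $x$ must lie in $V \cap C(f)$, observes that the one-dimensional part of this intersection consists of periodic critical components (hence absorbed into critical component cycles), and concludes that the remaining sporadic points form the finite zero-dimensional part. You instead fix a non-periodic component $\Gamma_i$ of $C(f)$, note that the iterate images $f^n(\Gamma_i)$ all lie among the finitely many components of $PC(f)$ and are all distinct from $\Gamma_i$, and conclude via B\'ezout that $\bigcup_{n\ge 1}\bigl(\Gamma_i\cap f^n(\Gamma_i)\bigr)$ is finite. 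Both arguments rest on the same inputs --- algebraicity of $PC(f)$, finiteness of its components, and finiteness of the intersection of two distinct irreducible plane curves --- but the paper packages the sporadic cycles into the single set $V\cap C(f)$, whereas you track the iterate images of each non-periodic critical component separately. Your version sidesteps the (small, implicit) verification that a sporadic periodic critical point actually lands in a periodic component of $PC(f)$, while the paper's version avoids iterating each component individually.
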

\begin{proof}
	For every critical point cycle, there is a periodic point $x$ in this cycle such that $x\in C(f)$. Let $V$ be the union of periodic components in $PC(f)$, then $x\in V\cap C(f)$. It is clear that every irreducible curve in $V\cap C(f)$ is contained in a  critical component cycle. So if $x$ is not contained in a critical component cycle, then $x$ is contained in the dimension 0 subvariety of $V\cap C(f)$. Since this dimension 0 part is just a union of finite number of points, there are at most finitely many  critical point cycles which are not contained in a critical component cycle.
\end{proof}

\medskip
\section{Location of periodic points.}
Let $f$ be a PCF map on $\mathbb{P}^{2}$ of degree $\geq 2.$ In this section we prove Theorem 1.2 and Theorem 1.8. By Briend-Duval \cite{briend1999exposants}, $J_{2}$ is contained in the closure of the set of repelling periodic points. Thus to prove $J_2$ is the closure of the set of repelling periodic points, we only need to prove that repelling periodic points are contained in $J_2$.  We start with a definition.

\begin{definition}
Let $f$ be a PCF map on $\mathbb{P}^{2}$ of degree $\geq 2.$ A fixed point $x_{0}$ is called repelling if all eigenvalues of $D f$ at $x_{0}$ have modulus larger than $1.$ A fixed point $x_{0}$ is called super-saddle if $D f$ at $x_{0}$ has one 0 eigenvalue and one eigenvalue with modulus larger than 1. A fixed point $x_{0}$ is called super-attracting if $D f$ at $x_{0}$ has only 0 eigenvalues.
\end{definition} 
Note that by the result of Le \cite{le2019fixed}, for PCF maps on $\mathbb{P}^{2}$ every periodic point is either repelling, super-saddle or super-attracting.
\subsection{Repelling points} 

We first recall the following fundamental result, for the proof see Sibony \cite{sibony1999dynamique} Corollaire 3.6.5.
\begin{proposition}
Let $f$ be a holomorphic endomorphism on $\mathbb{P}^{k}$ of degree $\geq 2$. Then $x \in J_{k}$
if and only if for every neighborhood $U$ of $x$, $\mathbb{P}^{k} \setminus \bigcup_{n=0}^{\infty} f^{n}(U)$ is a pluri-polar set.
\end{proposition}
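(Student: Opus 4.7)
The plan is to organize the argument around the open forward-invariant set $W := \bigcup_{n \geq 0} f^n(U)$ and its closed, backward-invariant complement $E := \mathbb{P}^k \setminus W$, noting that $W$ is open because $f$ is an open map (as a finite holomorphic surjection of compact manifolds). The two essential inputs are: (a) the measure $\mu = T^k$ has H\"older-continuous local potentials and hence does not charge pluripolar sets, so in particular $J_k = \operatorname{Supp}(\mu)$ is itself non-pluripolar; and (b) the equidistribution-of-preimages theorem of Briend-Duval, extended by Dinh-Sibony to arbitrary points outside a pluripolar exceptional set: there is a totally invariant pluripolar set $\mathcal{E} \subset \mathbb{P}^k$ such that $d^{-kn}(f^n)^* \delta_p \to \mu$ weakly for every $p \notin \mathcal{E}$.

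For the direction $(\Leftarrow)$, I would argue the contrapositive. Suppose $x \notin J_k$; since $J_k$ is closed, choose a neighborhood $U$ of $x$ with $\overline{U} \cap J_k = \emptyset$. The total invariance $f^{*}\mu = d^k\mu$ forces $J_k$ to be totally invariant (its preimage equals its support), so the open set $\mathbb{P}^k \setminus J_k$ is forward invariant and $W \subset \mathbb{P}^k \setminus J_k$. Therefore $E \supset J_k$, which is non-pluripolar by (a), and $(\Leftarrow)$ follows at once.

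For the direction $(\Rightarrow)$, assume $x \in J_k$, so $\mu(U) > 0$ for any neighborhood $U$ of $x$. Pick any $p \notin \mathcal{E}$. By (b) and the lower semi-continuity of open-set mass under weak convergence, $d^{-kn}(f^n)^*\delta_p(U) \geq \mu(U)/2 > 0$ for all sufficiently large $n$, which forces $f^{-n}(p) \cap U$ to be non-empty. Any $y$ in this intersection satisfies $f^n(y) = p$ with $y \in U$, hence $p = f^n(y) \in f^n(U) \subset W$. Thus $E \subset \mathcal{E}$, which is pluripolar.

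The main obstacle is the use of the Dinh-Sibony equidistribution theorem in the $(\Rightarrow)$ step: it is a substantial result resting on the super-potential machinery developed in the decade after Sibony's original proof. A more self-contained approach would pass to the totally invariant closed set $E_\infty := \bigcap_n f^{-n}(E) \subset E$ and invoke the Fornaess-Sibony classification of totally invariant pluripolar subsets of $\mathbb{P}^k$, but one would then still need to control the difference $E \setminus E_\infty$ via pluripotential estimates on the backward dynamics, so in either route the pluripotential input is unavoidable.
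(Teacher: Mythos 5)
Your argument is correct in both directions. Note that the paper does not prove this proposition; it is quoted from Sibony's lecture notes, Corollaire 3.6.5 of \cite{sibony1999dynamique}, so there is nothing in the paper itself to compare against. The route you take is the natural one: $(\Leftarrow)$ follows from total invariance of $J_k=\operatorname{Supp}\mu$ together with the fact that $\mu$ charges no pluripolar set (hence $J_k$ is non-pluripolar), and $(\Rightarrow)$ follows from equidistribution $d^{-kn}(f^n)^*\delta_p\to\mu$ for $p$ outside a pluripolar exceptional set, combined with the portmanteau lower bound $\liminf_n d^{-kn}(f^n)^*\delta_p(U)\geq\mu(U)>0$ on the open set $U$. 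One remark on your closing paragraph: the $(\Rightarrow)$ step only needs the exceptional set to be pluripolar, not algebraic, so the Dinh--Sibony super-potential machinery you flag as the heavy input is not actually required; the earlier pluripolar version of the equidistribution theorem (Fornaess--Sibony, Briend--Duval) suffices and is presumably what underlies Sibony's Corollaire 3.6.5. Your alternative sketch via $E_\infty=\bigcap_n f^{-n}(E)$ and a classification of totally invariant sets would in any case still need comparable pluripotential input, so the equidistribution route you chose is the cleaner of the two.
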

\medskip
Now we can prove Theorem 1.8.
\begin{theorem}
Let $f$ be a PCF map on $\mathbb{P}^{2}$ of degree $\geq 2$, then every repelling periodic point belongs to $J_2$.
\end{theorem}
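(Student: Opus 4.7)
The plan is to combine the Briend--Duval containment $J_2 \subseteq \overline{\mathrm{Rep}(f)}$ with Ueda's analysis of bounded-ramification points (Lemma~2.8). Since $J_2 \subseteq \overline{\mathrm{Rep}(f)}$ is already known, it suffices to show that every repelling periodic point lies in $J_2$. Let $p$ be such a point; replacing $f$ by a suitable iterate, assume $p$ is fixed.

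First, verify that $p$ is a point of bounded ramification. Since $p$ is repelling, $Df(p)$ is invertible and hence $p \notin C(f)$; in particular $p$ lies on no critical component cycle and in no critical point cycle, so Lemma~2.4 yields that $p$ has bounded ramification.

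Next, argue by contradiction: assume $p \notin J_2$. The plan is to apply Lemma~2.8 to a compact connected set $K \ni p$ for which $\{f^n|_K\}$ admits a uniformly convergent subsequence; such a $K$ is produced via Ueda's inverse-branch machinery (Corollary~2.6 and Lemma~2.7) together with the Fatou-map geometry available at a point of $J_1 \setminus J_2$. Let $\phi$ denote a limit of a convergent subsequence $\{f^{n_k}|_K\}$. Since $p$ is fixed, $\phi(p)=p$, so $p \in \phi(K)$ is a bounded-ramification point. By Lemma~2.8(1), $\phi$ cannot be constant, for otherwise $\phi(K)=\{p\}$ would force $p$ to have unbounded ramification. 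Lemma~2.8(2) then produces a Siegel variety $Y$ containing $p$, i.e.\ an irreducible positive-dimensional analytic set with $f^{n_j}|_Y \to \mathrm{Id}$.

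The contradiction is then extracted from the eigenvalue structure of $Df(p)$. Differentiating $f^{n_j}|_Y \to \mathrm{Id}$ at the fixed point $p \in Y$ gives $(Df(p))^{n_j}|_{T_p Y} \to I$ on the nonzero subspace $T_p Y \subseteq T_p \mathbb{P}^2$. Because $Y$ is eventually $f$-periodic (the space of irreducible analytic subvarieties of $\mathbb{P}^2$ is discrete, so $f^{n_j}(Y)$ close to $Y$ forces equality for large $j$), $T_p Y$ is invariant under some power of $Df(p)$, and the eigenvalues of that restriction are eigenvalues of the corresponding iterate of $Df(p)$. Since every eigenvalue of $Df(p)$ has modulus strictly greater than $1$, the restricted iterates have operator norms tending to infinity, contradicting convergence to $I$. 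Therefore $p \in J_2$. The main obstacle in this plan is producing the compact connected set $K$ through $p$ with a uniformly convergent subsequence of forward iterates --- equivalently, witnessing a Fatou-disk or Siegel-like geometry at the repelling point $p \in J_1 \setminus J_2$ --- and this is precisely where the stated \emph{result of Ueda} enters essentially.
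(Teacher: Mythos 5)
Your opening observation (that $p\notin C(f)$ gives bounded ramification via Lemma~2.4) matches the paper, and your closing contradiction (no Siegel variety can pass through a repelling fixed point, since $\|Df^{n_j}(p)|_{T_pY}\|\to\infty$ while $f^{n_j}|_Y\to\mathrm{Id}$ forces $Df^{n_j}(p)|_{T_pY}\to I$) is sound in spirit. But the middle step --- ``such a $K$ is produced via Ueda's inverse-branch machinery together with the Fatou-map geometry available at a point of $J_1\setminus J_2$'' --- is not an obstacle to be filled in later; it is the entire content of the theorem, and it is not established. There is no general principle producing a positive-dimensional compact connected $K\ni p$ with $\{f^n|_K\}$ normal merely from $p\in J_1\setminus J_2$. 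Worse, the two candidate sources you gesture at both fail here: (a) the inverse branches $g_n$ from Corollary~2.6/Lemma~2.7 converge to a \emph{constant} map when $p$ is repelling (this is exactly what the paper proves, using a neighborhood $\Omega$ on which the backward dynamics contracts), so no non-constant Fatou map arises that way; (b) the route through Lemmas~4.1--4.3 (Fatou disk through a point of $J_1\setminus J_2$ off the critical-component basins) is circular, since Lemma~4.1's proof invokes Theorem~3.3 --- precisely the statement being proved. In effect your argument reduces to ``if $p\notin J_2$ then $p\in J_2$, hence $p\in J_2$'' with the first implication unsupported.

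The paper's proof takes a genuinely different and non-circular route: it never tries to produce a forward-normal $K$ through $p$. Instead, using that the inverse branches $g_n$ converge locally uniformly to the constant $p$ (an easy consequence of repulsion), it shows that for any neighborhood $U$ of $p$ and any $y\in\mathbb{P}^2\setminus PC(f)$, some $f^N(U)$ contains $y$; since $PC(f)$ is algebraic hence pluripolar, Sibony's characterization of $J_2$ (Proposition~3.2) then yields $p\in J_2$ directly. Your plan omits Proposition~3.2 entirely; without it (or some equivalent pluri-potential-theoretic criterion) there is no way to pass from ``the backward orbit of $p$ is rich'' to ``$p\in J_2$''. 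A minor additional issue: the claim that the space of irreducible analytic subvarieties of $\mathbb{P}^2$ is discrete, used to make $Y$ eventually periodic, is false (e.g.\ pencils of curves), though that particular detail is not needed once one argues directly with the derivative at the fixed point.
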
 
\begin{proof}
Our argument concerns the backward iterates around a fixed point. This kind of argument has already appeared in Fornaess-Sibony \cite{Fornaess2001dynamics}. Without loss of generality we may assume $x_{0}$ is fixed. Since $x_{0} \notin C(f), $ by Lemma 2.4, $x_{0}$ is a point of bounded ramification. We are going to show that for every neighborhood $U$ of
$x_{0}$, we have $\mathbb{P}^{2} \setminus PC(f) \subset \bigcup_{n=0}^{\infty} f^{n}(U).$ Thus since $PC(f)$ is algebraic, by Proposition 3.2 we will get $x_{0} \in J_{2}$.
\medskip
\par Let $y \in \mathbb{P}^{2} \setminus PC(f)$ be an arbitrary point. Let $W$ be a neighborhood of $x_{0}$ such that $y\in W$ and $W$ satisfies the condition in Corollary 2.6. It can be achieved, by first joining $x_{0}$ and $y$ by a smooth embedded (real) curve, and let $W$ be a sufficiently thin tubular neighborhood of this curve such that $\partial W$ is smooth. Let $m\geq 0$ such that ord $\left(f^{n}, x_n\right) \leq m$ for every $x_n\in f^{-n}(x_0)$ and $n\geq 0$. Let $\eta: Z \rightarrow W$ a $PC(f) \cap W$ branched holomorphic covering map as in Corollary 2.6. For $n \geq 0$, let $W_{n}$ denote the connected component of $f^{-n}(W)$ containing $x_{0}.$ Then by Corollary 2.6 we can define holomorphic map $g_{n}: Z \rightarrow W_{n}$ such that $f^{n} \circ g_{n}=\eta.$ By Lemma 2.7, $\left\{g_{n}\right\}$ is a normal family. We are going to show that actually $g_{n}$ converges to the fixed point $x_{0}$. Since $x_{0}$ is repelling, there exists a small neighborhood $\Omega \subset W$ such that $g_{n}$ converges to $x_{0}$ uniformly on $\eta^{-1}(\Omega).$ Now let $\phi$ be any limit map of some subsequence of $\left\{g_{n}\right\}$. Since $\phi$ is constant on an open set $\eta^{-1}(\Omega)$, $\phi$ is constant on $Z$. Thus any limit map of some subsequence of $\left\{g_{n}\right\}$ is the constant map $z\mapsto x_0$. This implies that $g_{n}$ converges to the fixed point $x_{0}$. In particular if $z_0$ satisfies $\eta(z_0)=y$, we have $g_n(z_0)\to x_0$ when $n\to +\infty$.
\medskip
\par Now let $U$ be an arbitrary neighborhood of $x_{0}.$ Since $g_n(z_0)\to x_0$, there exists $N>0$ such that $g_{N}\left(z_0\right) \in U$. Since $f^{N} \circ g_{N}=\eta$ we get $y \in f^{N}(U).$ Since $y\in \mathbb{P}^{2} \setminus PC(f)$ is arbitrary we get $\mathbb{P}^{2} \setminus PC(f) \subset \bigcup_{n=0}^{\infty} f^{n}(U).$ By the arbitrariness of $U$, we have $x_{0} \in J_{2}$, which completes the proof.
\end{proof}
\medskip

\subsection{Super-saddle points.} Let $f$ be a PCF map on $\mathbb{P}^{2}$ of degree $\geq 2$. We investigate the location of super-saddle cycles of $f.$ The following is an equivalent formulation of Theorem 1.2.

\begin{theorem}
Let $f$ be a PCF map on $\mathbb{P}^{2}$ of degree $\geq 2,$ Let $x_{0}$ be a super-saddle fixed point in $\mathbb{P}^{2}$. If the branches of $PC(f)$ are smooth and intersect transversally at $x_{0},$ then $x_{0}$ is contained in a critical component cycle.
\end{theorem}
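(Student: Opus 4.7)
The plan is to work in suitable local coordinates at $x_0$ and reduce the theorem to showing that the unstable manifold $W^u$ of $x_0$ is locally contained in $C(f)$. Since $|\lambda|>1$, the unstable manifold theorem produces a smooth $f$-invariant holomorphic curve $W^u$ through $x_0$ tangent to the $\lambda$-eigendirection, and Koenigs linearization of $f|_{W^u}$ yields coordinates $(z,w)$ at $x_0$ in which $W^u=\{w=0\}$ and $f(z,0)=(\lambda z,0)$. In these coordinates
\begin{equation*}
f(z,w)=\bigl(\lambda z+w\,h(z,w),\;w\,\tilde g(z,w)\bigr),
\end{equation*}
with $h,\tilde g$ holomorphic and $h(0,0)=\tilde g(0,0)=0$. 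A direct computation gives $\det Df=\lambda\,\tilde g+w\,A+w^2 B$ for some holomorphic $A,B$, so $\det Df|_{w=0}=\lambda\,\tilde g(z,0)$. Therefore $W^u\subset C(f)$ near $x_0$ if and only if $\tilde g(z,0)\equiv 0$; when this holds, $\det Df=w\,U$ with $U(0,0)\neq 0$, so $C(f)$ locally coincides with $W^u$, and the global irreducible component $\Lambda\subset C(f)$ extending this germ is $f$-invariant (since $f(\Lambda)$ is an irreducible curve sharing the $W^u$ germ at $x_0$, one has $f(\Lambda)=\Lambda$), yielding a critical component cycle containing $x_0$.

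The core of the argument is to show $\tilde g(z,0)\equiv 0$. Assume for contradiction that $\tilde g(z,0)$ vanishes to finite order $k_0\geq 1$ at $z=0$. In the generic subcase $\partial_w\tilde g(0,0)\neq 0$, the implicit function theorem gives a smooth branch $C_0$ of $C(f)$ at $x_0$ of the form $\{w=c\,z^{k_0}+O(z^{k_0+1})\}$ with $c\neq 0$, so $C_0$ has contact order exactly $k_0$ with $W^u$. Using $f_2=w\,\tilde g$, a direct Taylor computation yields $f_2|_{C_0}$ with nonzero leading term of order $2k_0$ in $z$ and $f_1|_{C_0}=\lambda z+O(z^{k_0+1})$; reparametrizing by $u=f_1|_{C_0}$, the image $f(C_0)$ is a smooth germ tangent to the $\lambda$-direction with contact order $2k_0$ with $W^u$. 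Iterating, $f^n(C_0)$ is smooth, tangent to the $\lambda$-direction, with contact order $2^n k_0$ for every $n\geq 1$.

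These contact orders are pairwise distinct, so the germs $\{f^n(C_0)\}_{n\geq 1}$ and hence the associated global irreducible components of $PC(f)$ are pairwise distinct. Yet all of them pass through $x_0$ with the same tangent line (the $\lambda$-eigendirection), directly contradicting the transversality hypothesis on the branches of $PC(f)$ at $x_0$. The main technical obstacle is establishing the contact-order doubling $k_0\mapsto 2k_0$ via the above Taylor computation; the degenerate subcase $(g_{10},g_{01})=(0,0)$, in which $C(f)$ is singular at $x_0$, can be handled by an analogous higher-order normal-form and contact-order analysis applied to each local branch of $C(f)$ through $x_0$.
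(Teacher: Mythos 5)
Your approach is genuinely different from the paper's. The paper argues by contradiction using Ueda's lemma on $V$-branched coverings and Le's Proposition 5.5, ultimately deriving the absurdity $f(C)=\{(0,0)\}$ for a certain critical component $C$ by an explicit Jacobian computation. You instead put $f$ into a normal form adapted to the unstable manifold $W^u$ and show that if $W^u\not\subset C(f)$ then the iterates $f^n(C_0)$ of a smooth critical branch $C_0$ produce infinitely many distinct local branches of $PC(f)$ at $x_0$, which is impossible. This is an attractive and more geometric route; the normal form $f=(\lambda z+w\,h,\,w\,\tilde g)$, the identity $\det Df|_{w=0}=\lambda\,\tilde g(z,0)$, and the first contact-order computation $k_0\mapsto 2k_0$ are all correct. (One small arithmetic slip: for $n\geq 2$ the contact order grows as $(n+1)k_0$, not $2^nk_0$, because once $\mathrm{ord}\,\phi_n=m_n>k_0$ one has $\mathrm{ord}\,\tilde g(z,\phi_n(z))=k_0$ exactly and $\mathrm{ord}(\phi_n\tilde g)=m_n+k_0$. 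This does not affect the conclusion since the orders remain strictly increasing.)

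However, there is a genuine gap: the degenerate subcase $\partial_w\tilde g(0,0)=0$ is where the hypothesis on $PC(f)$ actually does its work, and you do not handle it. Notice that in the generic subcase your contradiction ultimately comes only from the algebraicity of $PC(f)$ (finitely many local branches at $x_0$) — the smoothness/transversality hypothesis plays no role there. That should be a warning sign: the theorem as stated requires that hypothesis, so the degenerate subcase cannot be a routine variation of the generic one. Concretely, when $\partial_w\tilde g(0,0)=0$ the branch $C_0$ of $C(f)$ (if $C(f)$ is even smooth at $x_0$) may be tangent to the zero-eigendirection $\{z=0\}$; then both coordinates of $f|_{C_0}$ vanish to order $\geq 2$ at $x_0$, so $f|_{C_0}$ is ramified at $x_0$ and $f(C_0)$ can be a cuspidal germ. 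At that point the contact-order induction no longer applies, and one would need to argue instead that a singular germ of $PC(f)$ contradicts the smoothness hypothesis — a different argument, with several sub-cases (e.g.\ $C(f)$ itself singular at $x_0$ when $k_0\geq 2$, branches tangent to the $\lambda$-direction with $U(0,0)=0$, multiple branches, and the possibility that the reduced image germ is smooth because the parametrization factors through a power). Your claim that this ``can be handled by an analogous higher-order normal-form and contact-order analysis'' is not substantiated, and given that it is precisely here that the hypothesis must enter, the proof is incomplete rather than merely unfinished at the edges. I would recommend either carrying out this case analysis in full, or switching in this case to an argument closer to the paper's, which isolates the needed input from the hypothesis via Le's Proposition 5.5 and Ueda's singularity lemma.
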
 

\begin{proof}
In the following we let $C_{x_0}(f)$ denote the union of critical components containing $x_{0}$ and let $P C_{x_0}(f):=\bigcup_{n=1}^{\infty} f^{n}(C_{x_0}(f)) .$ It is easy to observe that $P C_{x_0}(f)=P C_{x_0}\left(f^{n}\right)$ for every $n \geq 1$. Then up to an iterate of $f,$ we may assume that all periodic components of $P C_{x_0}(f)$ are invariant, and every component of $PC_{x_0}(f)$ is mapped to an invariant component by at most one iterate. It remains to show that $x_{0}$ is contained in an invariant component of $C_{x_0}(f)$. We argue by contradiction. Assume $x_0$ is not  contained in an invariant component of $C_{x_0}(f)$, then there exists $V$ an invariant component of $PC_{x_0}(f)$ such that $V\not\subset C_{x_0}(f)$. We first show that $V\neq PC_{x_0}(f)$. We argue by contradiction, assuming $V=PC_{x_0}(f)$, then there exists a neighborhood $U$ of $x_0$ such that $f:U\to f(U)$ is a $V$-branched covering. Then, since $x_0\in C_{x_0}(f)$, $f^{-1}(V)$ contains a component of $C_{x_0}(f)$. Since $f^{-1}(V)$ also contains $V$, we deduce that $f^{-1}(V)$ is singular at $x_0$. We recall the following result of Ueda \cite{veda1998critical} Lemma 3.5.
\begin{lemma}
Let $f:U_1\to U_2$ be a $V$-branched holomorphic covering, where $U_1$, $U_2$ are complex manifolds and $V$ is a codimension 1 analytic subset of $U_2$. Suppose that $x_0\in U_1$ is a singular point of $f^{-1}(V)$, then $f(x_0)$ is a singular point of $V$.
\end{lemma}
Coming back to our situation, letting $U=U_1$ and $f(U)=U_2$ in above lemma, we know that $x_{0}$ is a singular point of $V$. This is impossible, since by our assumption $V$ is smooth at $x_0$. Thus there must exist a component $V_{1}$ of $PC_{x_0}(f)$ such that $V_{1} \neq V.$ 
\medskip
\par Let $V^{\prime}=f(V)$, then by our assumption $V'$ is invariant. We recall the following result of Le \cite{le2019fixed} Proposition 5.5.
\begin{lemma}
Let $f:(\mathbb{C}^2,0)\to (\mathbb{C}^2,0)$ be a proper holomorphic germ and let $\Sigma_1, \Sigma_2$ be two irreducible germs at $0$ such that $\Sigma_1\neq \Sigma_2$, $f(\Sigma_1)=\Sigma_2$ and $f(\Sigma_2)=\Sigma_2$. If $\Sigma_2$ is smooth at $0$ then the eigenvalue of $Df$ at $0$ are $0$ and $\lambda$ where $\lambda$ is the eigenvalue of $D_0 f|_{T_0 \Sigma_2}$.
\end{lemma}
Coming back to our situation, take $V=\Sigma_2$ (resp. $V'=\Sigma_2$) in above lemma, since $x_0$ is super-saddle, we get that the eigenvalue of $f$ restricted to $V$ (resp. $V^{\prime}$) at $x_{0}$ must be repelling. By our assumption $V^{\prime}$ and $V$ intersect transversally, the only possible case that $x_{0}$ being super-saddle is when $V^{\prime}=V$. Thus we must have $f\left(V_{1}\right)=V$ by our assumption that every component of $PC_{x_0}(f)$ is mapped to an invariant component by one iterate. To summarize, there exists $V_1\subset P C_{x_0}(f)$ such that $V_1\neq V$, and every component of $P C_{x_0}(f)$ is mapped to $V$ by one iterate (in particular, $f(V_1)=V$).
\medskip
\par We do a local coordinate change such that $x_{0}=(0,0), V=\{x=0\} \cap U,$ where $U$ is a small neighborhood of $(0,0)$. By \cite{le2019fixed} Proposition 5.5, $0$ is a repelling point of $\left.f\right|_{V}$. Let $\lambda$ be the eigenvalue of $f|_V$ at 0, $|\lambda|>1.$ Thus after a linear coordinate change fixing $V$, the expression of $f$ in this coordinate has the following form
\begin{equation}
f(x, y)=(G(x, y), \lambda y+a x+H(x, y)), 
\end{equation}
Where $G(x, y)=O\left(\left|x^{2}\right|,\left|y^{2}\right|\right)$, $x$ is a factor of $G$ and $H(x, y)=O\left(|x|^{2},|y|^{2}\right)$.
\medskip
\par Since $V_{1}$ is a smooth curve that intersects $\{x=0\}$ transversally, we may assume $V_{1}=$ $\{y=\phi(x)\}$ for some holomorphic function $\phi.$ We do a local coordinate change $x^{\prime} \rightarrow x$ $y^{\prime} \rightarrow y-\phi(x).$ In this coordinate the expression of $f$ has the same form as in (3.1), and $V_{1}=\{y=0\}.$ In the following we work in this coordinate.
\medskip

\par By our assumption that every component of $P C_{x_0}(f)$ is mapped to an invariant component by one iterate, there exists a critical component $C$ such that $f(C)=V_{1}$. Thus $C$ satisfies
the equation
\begin{equation}
  \lambda y+a x+H(x, y)=0.
\end{equation}

By the implicit function theorem, $C$ is a smooth curve that intersects with $\{x=0\}$ transversally. We let $C=\{y=\psi(x)\}$ for some holomorphic function $\psi$.
\medskip
\par By direct calculation, the Jacobian of $f$ is
\begin{equation*}
\operatorname{Jac}(f)=\frac{\partial G}{\partial x}\left(\lambda+\frac{\partial H}{\partial y}\right)-\frac{\partial G}{\partial y}\left(a+\frac{\partial H}{\partial x}\right).
\end{equation*}

Since $C$ is in the critical set of $f, \psi$ satisfies the following equation
\begin{equation}
 \quad \frac{\partial G}{\partial x}(x, \psi(x))\left(\lambda+\frac{\partial H}{\partial y}(x, \psi(x))\right)-\frac{\partial G}{\partial y}(x, \psi(x))\left(a+\frac{\partial H}{\partial x}(x, \psi(x))\right)=0. 
\end{equation} 

Take differential of $x$ in the both sides of (3.2) we get
\begin{equation}
\quad \lambda \psi^{\prime}(x)+a+\frac{\partial H}{\partial x}(x, \psi(x))+\frac{\partial H}{\partial y}(x, \psi(x)) \psi^{\prime}(x)=0.  
\end{equation} 

Combining (3.3) and (3.4) we get
\begin{equation*}
\left(\lambda+\frac{\partial H}{\partial y}(x, \psi(x))\right)\left(\frac{\partial G}{\partial x}(x, \psi(x))+\frac{\partial G}{\partial y}(x, \psi(x)) \psi^{\prime}(x)\right)=0.
\end{equation*}

Since $\lambda+\frac{\partial H}{\partial y}(x, \psi(x))\neq 0$ in a neighborhood of $0$, $\psi$ satisfies
\begin{equation*}
\frac{\partial G}{\partial x}(x, \psi(x))+\frac{\partial G}{\partial y}(x, \psi(x)) \psi^{\prime}(x)=0.
\end{equation*}

This implies that $\psi$ satisfies $G(x, \psi(x))=0$ for every $x$. Then by the expression (3.1), we must have $f(C) \subset V.$ Since $f(C)=V_{1},$ we have $f(C)=(0,0),$ which is a contradiction since $f$ is a locally finite to one map. Thus $x_{0}$ is contained in an invariant critical component, and the proof is complete.
\end{proof}
\medskip

\begin{corollary}
Let $f$ be a PCF map on $\mathbb{P}^2$ of degree $\geq 2$ such that all branches of $PC(f)$ are smooth and intersect transversally, then every periodic point in $J_2$ is repelling.
\end{corollary}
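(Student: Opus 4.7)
The plan is to combine the trichotomy of Le (every periodic point of a PCF map on $\mathbb{P}^2$ is repelling, super-saddle, or super-attracting) with the location results developed so far, and rule out the two non-repelling cases under the transversality assumption.

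First I would dispose of super-attracting periodic points. By the Fornaess--Sibony--Ueda--Rong description recalled in the introduction, the Fatou set of a PCF map on $\mathbb{P}^2$ is exactly the union of the basins of the super-attracting cycles. Hence every super-attracting periodic point lies in the Fatou set, which is disjoint from $J_1\supset J_2$, so no point of $J_2$ can be super-attracting.

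Next I would use the transversality hypothesis to dispose of super-saddle periodic points. Since super-saddle is a property preserved under iteration, it suffices to treat the fixed-point case by replacing $f$ by a suitable iterate. Theorem 1.2 (in the equivalent form of Theorem 3.4) says that under the assumption that all branches of $PC(f)$ are smooth and intersect transversally, every super-saddle fixed point $x_0$ is contained in an invariant critical component $\Lambda$. As recalled in Section~1.2, such a $\Lambda$ is an attracting set whose basin $\mathcal{B}(\Lambda)$ contains $\Lambda$ itself, and by the Taflin result cited there, $\mathcal{B}(\Lambda)\cap J_2=\emptyset$ (we are not in the degenerate case $\Lambda=\mathbb{P}^2$ since a proper critical component is a curve). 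In particular $x_0\in\Lambda\subset\mathcal{B}(\Lambda)$ cannot lie in $J_2$.

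Combining these two observations with Le's trichotomy, any periodic point of $f$ lying in $J_2$ must be repelling, which is the desired conclusion. The only non-routine input is Theorem 1.2, which has already been established; the rest is simply an assembly of previously cited facts, so I do not anticipate a real obstacle beyond making sure the reduction to fixed points (by passing to an iterate, noting that $J_2$ is unchanged) is done cleanly.
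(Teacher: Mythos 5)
Your proof is correct and follows essentially the same route as the paper: Le's trichotomy, super-attracting points lying in the Fatou set, and Theorem 3.4 to place super-saddle points in critical component cycles. You have simply made explicit the last step (that critical component cycles are attracting sets whose basins miss $J_2$, via Taflin), which the paper asserts with a bare ``in particular''; this is a useful clarification but not a different argument.
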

\begin{proof}
Let $x_0\in J_2$ be a periodic point. By Le \cite{le2019fixed}, $x_0$ is either repelling, super-saddle or super-attracting. Since super-attracting periodic points belong to the Fatou set, $x_0$ is not super-attracting. By Theorem 3.4, super-saddle periodic points are contained in critical component cycles, in particular they are not in $J_2$. So $x_0$ must be repelling.
\end{proof}
\medskip
\section{Structure of the Julia sets.}
In this section we prove Theorem 1.1. We will first prove that  there exists a Fatou disk containing $x$ for every  $x\in J_1\setminus J_2$ which is not contained in a basin of critical component cycle (Theorem 4.5). Second we prove that  if  there is a Fatou disk containing $x$ for $x\in J_1$,  then $x$ must be contained in the basin of a  critical component cycle or in  the stable manifold of a sporadic super-saddle cycle (Theorem 4.8). Then Theorem 1.1 is a combination of these two theorems. We start with several lemmas.
\begin{lemma}
Let $f$ be a PCF map on $\mathbb{P}^{2}$ of degree $\geq 2.$ Let $x_{0} \in \mathbb{P}^{k}$ and let $v$ be a subsequence of integers such that $x_{v} \rightarrow y$ and $y$ is of bounded ramification, where $x_v=f^v(x_0)$. Let $W$ be a neighborhood of $y$ and $\eta: Z \rightarrow W$ a holomorphic covering as in Corollary 2.6. Let $W_{v}$ denote the connected component of $f^{-v}(W)$ containing $x_{0} .$ Let $g_{v}: Z \rightarrow W_{v}$ such that $f^{v} \circ g_{v}=\eta .$ Assume $g_{v}$ converges to a constant map, then $x_{0} \in J_{2}$ and $y \in J_{2}$
\end{lemma}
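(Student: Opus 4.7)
The plan is to first show that the constant limit $z_0$ of $g_v$ belongs to $J_2$ by the pluri-polar argument behind Proposition 3.2, then to identify $z_0$ with $x_0$; the conclusion $y \in J_2$ will follow from forward invariance of $J_2$ applied to the orbit $x_v = f^v(x_0)$.

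For the first step I would argue as in Theorem 3.3. Given an arbitrary $y' \in \mathbb{P}^2 \setminus PC(f)$, enlarge $W$ to a thin tubular neighborhood $W'$ of a smooth embedded curve joining $y$ and $y'$, so that Corollary 2.6 applies and produces $\eta': Z' \to W'$ together with lifts $g_v': Z' \to W_v'$, where $W_v'$ is the connected component of $f^{-v}(W')$ containing $x_0$. Applying Ueda's universal property (Lemma 2.5 (1)) to the branched covering $\eta'|_{\eta'^{-1}(W)}: \eta'^{-1}(W) \to W$ gives a holomorphic $\gamma: Z \to Z'$ with $\eta' \circ \gamma = \eta$. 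Since by Lemma 2.5 (2) the fibers $f^{-v}(y) \cap W_v$ and $f^{-v}(y) \cap W_v'$ are each a single point and hence coincide, $g_v'$ and $g_v$ can be chosen so that $g_v' \circ \gamma = g_v$. The hypothesis $g_v \to z_0$ then forces $g_v'|_{\gamma(Z)} \to z_0$; since $\{g_v'\}$ is normal on $Z'$ by Lemma 2.7 and $Z'$ is irreducible, the identity principle implies that every subsequential limit of $g_v'$ is the constant $z_0$, hence $g_v' \to z_0$ on all of $Z'$. Picking $\tilde{y}' \in \eta'^{-1}(y')$, we obtain $y' = f^v(g_v'(\tilde{y}')) \in f^v(U)$ for every neighborhood $U$ of $z_0$ and $v$ large, so $\mathbb{P}^2 \setminus \bigcup_n f^n(U) \subset PC(f)$ is pluri-polar. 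Proposition 3.2 then yields $z_0 \in J_2$.

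The second step is to show $z_0 = x_0$. By Lemma 2.5 (2), $f^{-v}(y) \cap W_v$ consists of a single point $q_v = g_v(\tilde{y})$, and hence $q_v \to z_0$. Thus $f^v|_{W_v}: W_v \to W$ is a branched covering totally ramified over $y$ of local degree $d_v \leq m$ at $q_v$. The standard local normal form of such a totally ramified cover shows that the $d_v$ preimages in $W_v$ of any point $p$ sufficiently close to $y$ all lie in a neighborhood of $q_v$ of size controlled by $|p - y|^{1/d_v}$. Applying this with $p = x_v \to y$ and noting that $x_0 \in f^{-v}(x_v) \cap W_v$ is one of these preimages, we obtain $|x_0 - q_v| \to 0$, hence $z_0 = x_0$. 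Combining with the first step yields $x_0 \in J_2$, and $y = \lim f^v(x_0) \in \overline{J_2} = J_2$ by forward invariance of $J_2$ and closedness. The main technical obstacle is to keep the constants in the local preimage estimate uniformly controlled as $v$ varies, since both the cover $f^v|_{W_v}$ and the center $q_v$ depend on $v$; the bounded ramification $d_v \leq m$ together with compactness of $\mathbb{P}^2$ should suffice. Alternatively, if direct identification $z_0 = x_0$ proves awkward, one could show instead that $z_0$ lies in a stable set of $x_0$ with $f^n(z_0) \to x_0$, which combined with $z_0 \in J_2$ and forward invariance again gives $x_0 \in J_2$.
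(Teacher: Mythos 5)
Your proposal takes a genuinely different route from the paper's. The paper proves $y\in J_2$ first: since $g_v\to$ constant uniformly on the compact $\overline{Z'}$ with $Z':=\eta^{-1}(B(y,r/2))$, the image $g_v(Z')$ shrinks to a point, so for $v$ large the map $f^{v-N}\colon f^N(g_v(Z'))\to W$ is polynomial-like and has a fixed point in $B(y,r)$; letting $r\to 0$, $y$ is a limit of periodic points, which (by Lemma 2.9 and the bounded-ramification hypothesis on $y$) are eventually repelling, so $y\in J_2$ by Theorem 3.3. Then $g_v(\eta^{-1}(y))\in f^{-v}(y)$ converge to the constant limit, which is $x_0$, and backward invariance of $J_2$ gives $x_0\in J_2$. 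You instead try to show the constant limit $z_0$ lies in $J_2$ directly via the pluri-polar criterion, identify $z_0=x_0$, and finish by forward invariance.

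The main gap is in your enlargement step. Lemma 2.5 only asserts $m$-universality of $\eta\colon Z\to W$, i.e.\ it dominates $PC(f)\cap W$-branched covers of sheet number $\leq m$. The restriction of $\eta'$ over $W$, that is $\eta'|_{\eta'^{-1}(W)}$, will in general have sheet number much larger than $m$ (an $m$-universal cover of $W'$ typically has many more than $m$ sheets), so the map $\gamma\colon Z\to \eta'^{-1}(W)$ with $\eta'\circ\gamma=\eta$ that you invoke need not exist, and the identity $g_v'\circ\gamma=g_v$ on which the whole argument rests is unavailable. You would need another mechanism to transfer the constancy of $\lim g_v$ to $g_v'$ — for instance, observing that $g_v'$ restricted over $B(y,r/2)$ has the same image set $f^{-v}(B(y,r/2))\cap W_v=g_v(Z')$, which shrinks to a point, and then using normality of $\{g_v'\}$ together with irreducibility of $Z'$ — but as written this step is missing. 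Your step identifying $z_0=x_0$ also depends on uniform local-preimage estimates as $v\to\infty$ that you yourself flag as problematic; a cleaner argument is that $x_0\in f^{-v}(x_v)\cap W_v\subset g_v(Z')$, and $\operatorname{diam}g_v(Z')\to 0$ by uniform convergence on $\overline{Z'}$, which forces $z_0=x_0$ (this is also what the paper uses implicitly when it writes ``the convergence of $g_v$ to $x_0$'').
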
 
\begin{proof}
We first prove $y\in J_{2}.$ We take $W=B(y,r)$ for sufficiently small $r.$ Let $Z^{\prime}=\eta^{-1}(B(y, r/2)).$ Let $N$ large enough such that $x_{v} \in B(y, r/2)$ when $v \geq N.$ Since $g_{v}$ converges to a constant map and $Z^{\prime} \subset\subset Z$, we have diam $g_{v}\left(Z^{\prime}\right) \rightarrow 0.$ Let $v$ large enough such that $W^{\prime}:=f^{N}\left(g_{v}\left(Z^{\prime}\right)\right) \subset\subset W.$ Then $f^{v-N}: W^{\prime} \rightarrow W$ is a polynomial-like map, By \cite{dinh2010dynamics} Theorem
2.22, there exists a fixed point of $f^{v-N}$ in $W$. Letting $r \rightarrow 0$, we get that $y$ is approximated by periodic points. By the result of Le \cite{le2019fixed}, every periodic point of a PCF map $f$ on $\mathbb{P}^{2}$ is repelling, super-saddle or super-attracting. Since $y$ is not in a critical component cycle, and since there are only finitely many super-attracting periodic points and super-saddle periodic points outside critical component cycles (see Lemma 2.9), $y$ is approximated by repelling periodic points. By Theorem 3.3, $y \in J_{2}.$ Let $z=\eta^{-1}(y)$. The convergence of $ g_{v}$ to $x_{0}$ implies $g_{v}(y) \rightarrow x_{0},$ thus there is a sequence of preimage $\left\{y_{v}\right\}$ of $y$ such that $y_{v}$ converges to $x_{0}.$ By the backward invariance of
$J_{2},$ we conclude that $x_{0} \in J_{2}$.
\end{proof}
\medskip
\begin{lemma}
Let $f$ be a PCF map on $\mathbb{P}^{2}$ of degree $\geq 2 .$ Let $x_{0} \in \mathbb{P}^{2}$ and let $v$ be a subsequence of integers such that $x_{v} \rightarrow y$ and $y$ is of bounded ramification, where $x_v=f^v(x_0)$. Let $W$ be a neighborhood of $y$ and $\eta: Z \rightarrow W$ a holomorphic covering as in Corollary 2.6. Let $W_{v}$ denotes the connected component of $f^{-v}(W)$ containing $x_{0} .$ Let $g_{v}: Z \rightarrow W_{v}$ such that $f^{v} \circ g_{v}=\eta .$ Assume $g_{v}$ converges to a non-constant map $\phi,$ then there exists a Fatou disk containing $x_{0}$.
\end{lemma}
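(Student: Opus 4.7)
The plan is to produce a point $z^* \in Z$ with $\phi(z^*) = x_0$ and then restrict $\phi$ to a holomorphic disk in $Z$ through $z^*$ along which $\phi$ is non-constant. Since $\phi$ is a Fatou map by Lemma 2.7, and precomposition by a holomorphic map preserves the Fatou property (by Theorem 2.2), the image of such a disk will be the desired Fatou disk through $x_0$.

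The crux is to show $x_0 \in \phi(Z)$. First I would verify that $g_v : Z \to W_v$ is surjective. The map $g_v$ is proper: for any compact $K \subset W_v$, $g_v^{-1}(K) \subset \eta^{-1}(f^v(K))$, which is compact since $\eta$ is a proper branched covering. By Remmert's proper mapping theorem, $g_v(Z)$ is an analytic subset of $W_v$. The identity $f^v \circ g_v = \eta$, together with the surjectivity of $\eta$ onto $W$ and the finiteness of $f^v$, forces $\dim g_v(Z) = 2$, and the only two-dimensional analytic subset of the connected open set $W_v \subset \mathbb{P}^2$ is $W_v$ itself. Now pick $z_v \in Z$ with $g_v(z_v) = x_0$; then $\eta(z_v) = f^v(x_0) = x_v \to y$. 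By Lemma 2.5(2) applied to $\eta$, the preimage $\eta^{-1}(y)$ is a single point $z_y$, and since $\eta$ is proper the sequence $\{z_v\}$ is precompact in $Z$ with every accumulation point in $\eta^{-1}(y)$, so $z_v \to z_y$. Uniform convergence of $g_v \to \phi$ on a compact neighborhood of $z_y$ then yields $\phi(z_y) = \lim_v g_v(z_v) = x_0$.

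Finally I would extract the Fatou disk. The space $Z$ is irreducible of dimension $2$ and $\phi$ is non-constant, so $\phi^{-1}(x_0)$ is a proper analytic subset of $Z$, of dimension at most $1$. Through $z_y$ one can find a holomorphic disk $\psi : \mathbb{D} \to Z$ with $\psi(0) = z_y$ and $\psi(\mathbb{D}) \not\subset \phi^{-1}(x_0)$: for instance, normalize the irreducible component through $z_y$ of $\eta^{-1}(\ell)$ for a suitable complex line $\ell \subset \mathbb{P}^2$ through $y$, noting that all but a discrete family of such lines work (otherwise the union of the corresponding lifts would cover a full neighborhood of $z_y$ in $Z$ and force $\phi \equiv x_0$ by irreducibility). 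The composition $\phi \circ \psi : \mathbb{D} \to \mathbb{P}^2$ is then a non-constant Fatou map with $(\phi \circ \psi)(0) = x_0$, and its image is the desired Fatou disk through $x_0$. The main technical point is the surjectivity of $g_v$; the remainder is a compactness argument in $Z$ combined with the Fatou-map calculus from Section 2.
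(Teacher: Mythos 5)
Your proposal is correct and takes essentially the same approach as the paper's proof: produce a point $z\in Z$ with $\phi(z)=x_0$ by passing to the limit in $g_v(z_v)=x_0$, then restrict $\phi$ to a holomorphic disk through $z$ on which it is non-constant, which is a Fatou disk by Lemma 2.7. You carefully fill in several points the paper leaves implicit --- the surjectivity of $g_v$ onto $W_v$ via Remmert's theorem, why $z_v$ converges to the unique $\eta$-preimage of $y$, and the explicit construction of a disk through $z$ not contained in $\phi^{-1}(x_0)$ --- but the underlying argument is the same.
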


\begin{proof}
Let $W=B(y,r)$ for small $r$. Let $M=\phi(Z),$ we will show that $M$ contains a Fatou disk containing $x_{0}.$ Let $N$ large enough such that $x_{v} \in W$ when $v \geq N.$ Then for $v \geq N,$ there exist $z_{v} \in Z^{\prime}$ such that $g_{v}\left(z_{v}\right)=x_{0}.$ Let $z=\eta^{-1}(y),$ it is clear that $z_{v} \rightarrow z$ when $v \rightarrow+\infty.$ Let $v \rightarrow+\infty$ in the equation $g_{v}\left(z_{v}\right)=x_{0}$ we get $\phi(z)=x_{0},$ then $x_{0} \in M$. By Lemma 2.7, $\phi: Z \rightarrow \mathbb{P}^{2}$ is a Fatou map, by definition, this implies that $\left\{\left.f^{n}\right|_{M}\right\}_{n \geq 1}$ is a normal family. Let $D \subset Z$ be a holomorphic disk containing $z$ such that $\phi$ is not a constant map when restricted to $D,$ then $\phi(D) \subset M$ is a Fatou disk containing $x_{0}.$
\end{proof} 
\medskip
\begin{lemma}
Let $f$ be a PCF map on $\mathbb{P}^{2}$ of degree $\geq 2 .$ Let $x_{0} \in J_{1}$ such that $x_{0}$ is not  contained in the attracting basin of a critical component cycle nor contained in the stable manifold of a super-saddle cycle, then there exists a subsequence $v$ of positive integers such that $x_{v}=f^{v}\left(x_{0}\right) \rightarrow y,$ where $y$ is a point of bounded ramification.
\end{lemma}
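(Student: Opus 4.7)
The plan is to argue by contradiction on the $\omega$-limit set $\omega(x_0) = \bigcap_{n \geq 0} \overline{\{f^k(x_0) : k \geq n\}}$. Suppose that every accumulation point of the forward orbit has unbounded ramification; by Lemma 2.4 this means $\omega(x_0) \subset \Lambda \cup P'$, where $\Lambda$ denotes the union of all critical component cycles (a closed $1$-dimensional algebraic set) and $P'$ denotes the finite set of critical periodic points not lying on $\Lambda$ (finite by Lemma 2.9). I aim to contradict one of the three hypotheses: $x_0 \in J_1$, $x_0 \notin \mathcal{B}(\mathcal{C})$ for every critical component cycle $\mathcal{C}$, and $x_0 \notin W^s(C)$ for every super-saddle cycle $C$.

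First, I rule out the case $\omega(x_0) \cap \Lambda \neq \emptyset$. If some subsequence $f^{n_j}(x_0)$ converges to a point on a critical component cycle $\mathcal{C}$, then since $\mathcal{C}$ admits a trapping region $U_\epsilon$ with $f(U_\epsilon) \subset U_\epsilon$ and $\mathcal{C} = \bigcap_{n \geq 1} f^n(U_\epsilon)$, for $j$ large we have $f^{n_j}(x_0) \in U_\epsilon$, hence $x_0 \in \mathcal{B}(\mathcal{C})$, a contradiction. Thus $\omega(x_0) \subset P'$, and since $P'$ is finite and $f$-invariant, $\omega(x_0)$ is a finite union of critical periodic cycles, each either super-attracting or super-saddle by Le's theorem.

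Next I show that $\omega(x_0)$ must be a single cycle. Choose $\epsilon > 0$ less than half the minimum pairwise distance in $P'$; for $n$ sufficiently large the point $f^n(x_0)$ lies within $\epsilon$ of a unique $p_n \in P'$. By continuity of $f$, shrinking $\epsilon$ if necessary, $f(p_n)$ is again the unique point of $P'$ within $\epsilon$ of $f^{n+1}(x_0)$, so $p_{n+1} = f(p_n)$. The sequence $(p_n)$ is thus a forward orbit in the finite set $P'$, hence eventually periodic; let $C \subset P'$ be the cycle it tracks. Then $d(f^n(x_0), C) \to 0$ and every point of $C$ is visited infinitely often, so $\omega(x_0) = C$.

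Finally, either $C$ is super-attracting, in which case a trapping neighborhood of $C$ lies in the Fatou component attracted to $C$ and $f^n(x_0)$ eventually enters it, contradicting $x_0 \in J_1$; or $C$ is super-saddle, in which case the condition $d(f^n(x_0), C) \to 0$ places $x_0 \in W^s(C)$, contradicting the last hypothesis. The main subtlety is the orbit-tracking step showing that $\omega(x_0)$ cannot distribute itself over several disjoint cycles of $P'$; this relies crucially on the finiteness of $P'$ together with continuity of $f$, which prevents the orbit from jumping between well-separated cycles once it has entered a sufficiently small neighborhood of one.
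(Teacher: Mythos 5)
Your proof is correct and follows essentially the same route as the paper: assume for contradiction that $\omega(x_0)$ contains only points of unbounded ramification, reduce via Lemma 2.4 and the trapping-region/basin hypothesis to $\omega(x_0)$ lying in the finite set of sporadic critical point cycles, show $\omega(x_0)$ is a single cycle, and then rule out both the super-attracting and super-saddle possibilities. The only difference is that where the paper invokes Bowen's lemma (Lemma 4.4) to force $\omega(x_0)$ to be a single cycle, you give a direct two-radius orbit-tracking argument; that argument is sound once the two radii ($\epsilon_1 \le \epsilon_0$ with $d(a,b) < \epsilon_1 \Rightarrow d(f(a), f(b)) < \epsilon_0$) are made explicit, and it is in fact the standard elementary proof of Bowen's statement in the finite-set case, so the two approaches coincide in substance.
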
 

\begin{proof}
There are at most finitely many critical point cycles, which are not contained in the critical component cycles (see Lemma 2.9). We denote this finite set by $E.$ We show that if $x_{0} \in J_1$ satisfying  that $x_{0}$ is not contained in the attracting basins of critical component cycles and $\omega\left(x_{0}\right)$ contains only points of unbounded ramification, then $x_0$ is contained in the stable manifold of a sporadic super-saddle cycle. If $x_0$ satisfies the above assumption, by Lemma 2.4 we know that $\omega\left(x_{0}\right) \subset E.$ We recall the following basic property of $\omega$ -limit set: 
\begin{lemma}
Let $X$ be a compact metric space, let $f:X\to X$ be a continuous map, and let $g:\omega(x_0)\to\omega(x_0)$ be the restriction of $f$ on the $\omega$-limit set of $x_0$, then there is no non-trivial open subset $U$ of $\omega(x_0)$ such that $g(\overline{U})\subset U$.
\end{lemma} 
\begin{proof}
Bowen (\cite{bowen1975omega} Theorem 1) proved the above lemma for homeomorphisms, but the proof also holds for non-invertible maps. For the completion we give a proof here. Assume by contradiction that such open subset $U$ exists, let $Y:=\omega(x_0)$. By our assumption $2\epsilon:=\text{dist}(Y\setminus U, g(\overline{U}))>0$. Choose $0<\delta<\epsilon$ such that $\text{dist}(x_1,x_2)<\delta$ implies $\text{dist}(f(x_1),f(x_2))<\epsilon$ for every $x_1,x_2\in X$. Now it is clear that there is $N>0$ such that $\text{dist}(f^n(x_0),Y)<\delta$ when $n>N$ (otherwise $\omega(x_0)$ will be strictly larger than $Y$). Pick $M\geq N$ such that $\text{dist}(f^M(x_0), g(\overline{U}))<\epsilon$ and $\text{dist}(f^M(x_0), y)<\delta$ for some $y\in Y$. Then $\text{dist}(g(\overline{U}), y)<2\epsilon$, which implies $y\in U$. Then
\begin{equation*}
  \text{dist}(f^{M+1}(x_0), g(\overline{U}))\leq \text{dist}(f^{M+1}(x_0), g(y))<\epsilon.
\end{equation*}
Inductively for all $m\geq M$ we have $\text{dist}(f^{m}(x_0), g(\overline{U}))<\epsilon$. This implies $Y\cap (Y\setminus U)=\emptyset$, which is a contradiction.
\end{proof}
\par Coming back to our situation, since $\omega(x_0)$ is a finite set, the only possibility is that $\omega\left(x_{0}\right)$ contains a single periodic cycle. Since $x_{0} \in J_1$ and $\omega(x_0)$ contains no points of bounded ramification, this cycle must be a super-saddle cycle. Thus $x_{0}$ is contained in the stable manifold of this super-saddle cycle. It follows that if $x_{0}$ is not contained in the attracting basin of a critical component cycle nor contained in the stable manifold of a super-saddle cycle, $\omega\left(x_{0}\right)$ has non-empty intersection with the set of points of bounded ramification. This completes the proof.
\end{proof} 
\medskip

Now we can prove the following theorem.

\begin{theorem}
Let $f$ be a PCF map on $\mathbb{P}^{2}$ of degree $\geq 2 .$ If $x \in J_{1} \setminus J_{2}$ which is not contained in the basin of a critical component cycle, then there is a Fatou disk $D$ containing $x$.
\end{theorem}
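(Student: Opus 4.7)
The plan is to split into two cases according to whether $x_0$ lies in the stable manifold of a super-saddle periodic cycle, since Lemma 4.3 --- our main tool for extracting a subsequence landing at a point of bounded ramification --- explicitly requires this exclusion as a hypothesis.

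\medskip

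\emph{Case A.} Suppose $x_0\in W^s(p)$ for some super-saddle periodic point $p$; replacing $f$ by an iterate we may assume $p$ is fixed. Since $Df(p)$ has eigenvalues $0$ and $\lambda$ with $|\lambda|>1$, the stable manifold theorem produces a smooth holomorphic local stable disk $W^s_{loc}(p)$ tangent to the $0$-eigenspace, on which $f$ acts as a $1$-dimensional super-attracting germ; hence $\{f^n|_{W^s_{loc}(p)}\}$ converges uniformly to $p$ and $W^s_{loc}(p)$ is itself a Fatou disk. Choose $N$ with $f^N(x_0)\in W^s_{loc}(p)$, pick a small holomorphic subdisk $D'\subset W^s_{loc}(p)$ through $f^N(x_0)$, and let $D$ be a local irreducible analytic component of $f^{-N}(D')$ through $x_0$, parameterized via its normalization by a non-constant holomorphic map $\psi:\mathbb{D}\to\mathbb{P}^2$. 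For $n\geq N$ we have $f^n\circ\psi=(f^{n-N}|_{D'})\circ(f^N\circ\psi)$, so normality of $\{f^{n-N}|_{D'}\}$ transfers to $\{f^n\circ\psi\}$, exhibiting $D$ as a Fatou disk containing $x_0$.

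\medskip

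\emph{Case B.} Suppose $x_0$ lies in no such stable manifold. Lemma 4.3 supplies a subsequence $v\to\infty$ with $f^v(x_0)\to y$ for some $y$ of bounded ramification. Choose a neighborhood $W$ of $y$ satisfying the triangulation condition of Lemma 2.5 and let $\eta:Z\to W$ be the associated $PC(f)\cap W$-branched covering of Corollary 2.6. Let $W_v$ denote the component of $f^{-v}(W)$ containing $x_0$, and let $g_v:Z\to W_v$ satisfy $f^v\circ g_v=\eta$. By Lemma 2.7 the family $\{g_v\}$ is normal; extract a subsequential limit $\phi:Z\to\mathbb{P}^2$. If $\phi$ were constant, Lemma 4.1 would force $x_0\in J_2$, contradicting $x_0\in J_1\setminus J_2$. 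Hence $\phi$ is non-constant, and Lemma 4.2 directly produces a Fatou disk through $x_0$.

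\medskip

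The main obstacle is Case A: one must produce a genuine holomorphic local stable disk at the super-saddle and then transfer normality across an inverse branch of $f^N$ which may itself be critical at $x_0$ (this is why we pass through the normalization of the analytic component containing $x_0$). Case B is essentially a bookkeeping combination of the machinery already developed, with the crux being that the dichotomy ``constant vs.\ non-constant limit of $g_v$'' is exactly covered by Lemmas 4.1 and 4.2.
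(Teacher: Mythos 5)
Your proposal is correct and follows essentially the same two-case decomposition as the paper: the stable-manifold case is handled by exhibiting a Fatou disk directly, and the remaining case by extracting a subsequence landing at a point of bounded ramification and applying Lemmas 4.1, 4.2, 4.3, and Corollary 2.6. Your Case A is in fact spelled out more carefully than the paper's (which simply asserts an embedded disk coinciding with the local stable manifold at $x_0$, an assertion that is delicate when $x_0$ is critical for some iterate); pulling back a subdisk of $W^s_{\mathrm{loc}}(p)$ through the normalization of the branch of $f^{-N}(D')$ at $x_0$ closes that gap cleanly.
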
 

\begin{proof}
Suppose first $x_{0}$ is contained in the stable manifold of a super-saddle cycle. Then there exists an embedded holomorphic disc $D$ containing $x_{0}$ such that $D$ coincides with the local stable manifold at $x_{0},$ and it is clear that $\left\{f^{n} | D\right\}_{n \geq 1}$ is normal. Now suppose $x_{0}$ satisfies the assumptions of the theorem, and $x_{0}$ is not contained in the stable manifold of a super-saddle cycle. By Lemma $4.3,$ we can choose a subsequence $v$ of positive integers such that $x_{v}=f^{v}\left(x_{0}\right) \rightarrow y,$ and $y$ is of bounded ramification. Consider a neighborhood $W$ of $y$ and $\eta: Z \rightarrow W$ the holomorphic covering as in Corollary 2.6. Let $W_{v}$ denotes the connected component of $f^{-v}(W)$ containing $x_{0} .$ Let $g_{v}: Z \rightarrow W_{v}$ such that $f^{v} \circ g_{v}=\eta .$ By Lemma 2.7 $\left\{g_{v}\right\}$ is a normal family. By passing to some subsequence, we may assume $g_{v}$ converges to a holomorphic map $\phi$. $\phi$ can not be a constant map, since otherwise by Lemma 4.1, $x_{0} \in J_{2},$ which is a contradiction. Thus $\phi$ is a non-constant map. By Lemma $4.2,$ there is a Fatou disk containing $x_{0},$ which completes the proof.
\end{proof}
\medskip
Before proving Theorem 4.8, we need some lemmas. 
\begin{lemma}
Let $f$ be a PCF map on $\mathbb{P}^{2}$ of degree $\geq 2 .$ Let $x \in J_1$ which is not contained in the  basin of a critical component cycle, nor in the stable manifold of a super-saddle cycle. Assume moreover that there is a Fatou disk $X$ containing $x$. Then there is a point $y\in \omega(x)$ such that there is a Siegel variety containing $y$.
\end{lemma}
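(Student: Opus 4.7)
The plan is to combine Lemma 4.3 with Lemma 2.8, using the Fatou disk $X$ to supply the compact connected set required by the latter. First, I would apply Lemma 4.3 (whose hypotheses are precisely what is assumed here) to extract a subsequence $v_k \to \infty$ and a point $y = \lim_k f^{v_k}(x) \in \omega(x)$ that is of \emph{bounded} ramification. This is the only step that uses the assumptions that $x$ avoids the basin of a critical component cycle and the stable manifold of a super-saddle cycle.

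Next, I would parametrize the Fatou disk by a non-constant Fatou map $\psi : \mathbb{D} \to \mathbb{P}^2$ with $\psi(z_0) = x$, so that $\{f^n \circ \psi\}$ is normal on $\mathbb{D}$. Passing to a further subsequence, $f^{v_k} \circ \psi$ converges locally uniformly to some holomorphic $\Phi : \mathbb{D} \to \mathbb{P}^2$ with $\Phi(z_0) = y$. Fixing a closed disk $D \subset \mathbb{D}$ around $z_0$, I would set $K := \psi(D) \subset \mathbb{P}^2$; by the identity principle applied to $\psi$, the set $K$ is compact, connected, and not a single point. Since each $f^{v_k}$ is a genuine map on $\mathbb{P}^2$, the limit $\Phi|_D$ factors through $\psi|_D$ as $\Phi|_D = \tilde{\Phi} \circ \psi|_D$ for some continuous $\tilde{\Phi} : K \to \mathbb{P}^2$, and uniform convergence on $D$ translates into $f^{v_k}|_K \to \tilde{\Phi}$ uniformly on $K$, with $\tilde{\Phi}(x) = y$.

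Finally, I would apply Lemma 2.8 to $K$ and $\tilde{\Phi}$. If $\tilde{\Phi}$ were constant then $\tilde{\Phi}(K) = \{y\}$, and Lemma 2.8(1) would force $y$ to be of unbounded ramification, contradicting our choice; hence $\tilde{\Phi}$ must be non-constant. Since $y \in \tilde{\Phi}(K)$ is of bounded ramification, Lemma 2.8(2) then furnishes a Siegel variety containing $y$, and this $y$ lies in $\omega(x)$ by construction. There is no serious obstacle: the only delicate point is ensuring that the point $y$ produced by Lemma 4.3 actually lies inside the limit set $\tilde{\Phi}(K)$, which is automatic here because $x \in K$ and $\tilde{\Phi}(x) = y$; this is exactly what rules out the constant alternative of Lemma 2.8 and forces the Siegel-variety alternative.
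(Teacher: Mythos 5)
Your proof is correct and follows essentially the same route as the paper's: choose a bounded-ramification point $y\in\omega(x)$, extract a uniformly convergent subsequence of $\{f^{n}\}$ on (a compact piece of) the Fatou disk converging to $y$ at $x$, and invoke Lemma~2.8, where part~(1) rules out the constant limit and part~(2) produces the Siegel variety. The paper's version of this argument is terser — it selects $y$ by the ad hoc condition of not being a super-saddle periodic point and then cites Lemma~2.4, and it applies Lemma~2.8 directly to $\{f^n|_X\}$ without addressing that the Fatou disk need not be compact — whereas you supply the two details it glosses over: you invoke Lemma~4.3 to produce $y$ of bounded ramification cleanly, and you pass to a compact subdisk $D\subset\mathbb{D}$ of the parametrizing Fatou map and factor the limit through $\psi|_D$ so that Lemma~2.8's hypothesis of a compact connected set $K$ with a uniformly convergent $\{f^n|_K\}$ is genuinely met. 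Both fillers are the right ones and your write-up is sound.
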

\begin{proof}
 We choose $y\in \omega(x)$ such that $y$ is not a super-saddle periodic point. Then by Lemma 2.4, $y$ is a point of bounded ramification. Let $\phi$ be a limit map of $\left\{ f^n|_X\right\}$ such that $\phi(x)=y$. Then by Lemma 2.8, there is a Siegel variety $X$ containing $y$.
\end{proof}
\medskip

\begin{lemma}
let $f$ be a PCF map on $\mathbb{P}^2$ of degree $\geq 2$. Let $X$ be a Siegel variety, then $X\not\subset PC(f)$.
\end{lemma}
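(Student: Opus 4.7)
The approach is by contradiction. Assume $X \subset PC(f)$. Since $X$ is irreducible analytic of positive dimension and $PC(f)$ is algebraic of dimension at most $1$, $X$ is a $1$-dimensional irreducible component of $PC(f)$. Uniform convergence $f^{n_j}|_X \to \mathrm{Id}$ together with the finiteness of the components of $PC(f)$ forces $f^{n_j}(X)=X$ for $j$ large, so $X$ is $f$-periodic; after replacing $f$ by an iterate (and the Siegel sequence by the sub-subsequence of indices divisible by the period), I may assume $f(X)=X$ and $\hat f^{n_j}\to\mathrm{Id}$ uniformly on the normalization $\hat X$.

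Let $\pi:\hat X\to X$ be the normalization and $\hat f:\hat X\to\hat X$ the lift; by Fornaess--Sibony $\hat X$ has genus $0$ or $1$. Since the degree of a holomorphic self-map between fixed compact Riemann surfaces is continuous under uniform convergence to a non-constant limit, $\deg(\hat f)^{n_j}=\deg(\hat f^{n_j})=1$ for $j$ large, hence $\deg(\hat f)=1$ and $\hat f$ is an automorphism. In particular $X\not\subset C(f)$, since by Jonsson's theorem an invariant critical component would give $\deg(\hat f)\ge 2$.

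The main step is to produce a periodic point $\hat p$ of $\hat f$ and apply Le's theorem (eigenvalues at periodic points of a PCF map on $\mathbb{P}^2$ are either $0$ or of modulus $>1$). On $\hat X=\mathbb{P}^1$, the Möbius $\hat f$ is elliptic (since $\hat f^{n_j}\to\mathrm{Id}$ non-trivially), with two fixed points whose multipliers lie on the unit circle. On elliptic $\hat X$ with $\hat f(z)=\zeta z+a$, unless $\zeta=1$ and $a$ is non-torsion, some iterate of $\hat f$ is the identity and every point of $X$ becomes periodic for $f$. At any periodic point $\hat p$ of $\hat f$ of period $m$, Cauchy estimates applied to $\hat f^{n_j}\to\mathrm{Id}$ together with the divisibility $m\mid n_j$ for $j$ large (forced by $\hat f^{n_j}(\hat p)\to\hat p$ along the finite orbit) yield $(\hat f^m)'(\hat p)^{n_j/m}\to 1$, so the multiplier $\mu$ has modulus $1$. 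A local coordinate analysis at $p=\pi(\hat p)$ according to whether $X$ is smooth, has a cusp, or has a node at $p$ exhibits an eigenvalue of $Df^m_p$ equal to $\mu$ or a positive integer power of $\mu$, hence of modulus $1$; this contradicts Le's theorem.

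The only remaining case is $\hat X$ elliptic with $\hat f$ a non-torsion translation, where $\hat f$ has no periodic points; this is the main obstacle. My plan: in local coordinates $(x,y)$ near a smooth point of $X$ with $X=\{x=0\}$ and $y$ a local uniformizer, write
\[
f(x,y)=(\mu(y)\,x+O(x^{2}),\ y+a+O(x)).
\]
A global analysis of $\mu$ on the compact elliptic $\hat X$ (viewed as a section of a degree-$0$ line bundle on $\hat X$, which must be trivial because the exclusion $X\not\subset C(f)$ rules out $\mu\equiv 0$) shows $\mu\equiv\mu_0$ a non-zero constant. The three sub-cases would be ruled out respectively by: $|\mu_0|<1$ contradicting Bonifant--Dabija--Milnor (elliptic curves cannot be attracting sets); $|\mu_0|=1$ placing a tubular neighbourhood of $X$ in the Fatou set via a normal family argument and contradicting $X\subset J_1$ through the Fornaess--Sibony description of the Fatou set of a PCF map; and $|\mu_0|>1$ via a Rouché-type argument in a tubular neighbourhood of $X$, using $f^{n_j}|_X\to\mathrm{Id}$ (which forces $n_j a\to 0$ in $\hat X$) to produce a periodic orbit of $f$ on $X$, restoring the contradiction with Le's theorem.
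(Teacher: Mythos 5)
The argument breaks down at the start. A Siegel variety in this paper is only a \emph{locally} analytic set (it is produced by Ueda's limit construction in Lemma 2.8, and in the proof of Theorem 4.8 the author freely shrinks it); it need not be Zariski closed, so from $X\subset PC(f)$ you may only conclude that $X$ is contained in some irreducible component $V$ of $PC(f)$, not that $X$ \emph{is} a component. Your claim ``$f^{n_j}(X)=X$ for $j$ large'' and the ensuing \emph{global} uniform convergence $\hat f^{n_j}\to\mathrm{Id}$ on the whole compact normalization are therefore unjustified, and the degree argument that extracts $\deg\hat f=1$ from it collapses: uniform convergence to the identity on a proper open piece of $\hat V$ gives no control on $\deg\hat f$.

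There is a second, more conceptual issue. Even if one could somehow obtain $\deg\hat f=1$, that would already be the end of the proof: for a PCF map on $\mathbb{P}^2$ and $V$ an invariant component of $PC(f)$, Jonsson's result (cited in the paper's proof) gives that $\hat f$ is a PCF rational map on $\mathbb{P}^1$ or an expanding torus map, hence of degree $\geq 2$ --- and this holds for \emph{all} invariant components of $PC(f)$, not only critical ones. You invoke Jonsson only to rule out $X\subset C(f)$ and then embark on a long automorphism case analysis (elliptic M\"obius maps, torus rotations, Le's eigenvalue theorem), apparently unaware that the degree bound already applies to $V$ itself. The paper's actual proof is short: normalize $V$, apply Jonsson to get $\deg\hat f\geq 2$, observe that an open piece of $\pi^{-1}(X)$ in the unramified locus of $\pi$ is a local Siegel variety for $\hat f$, and conclude because PCF maps on $\mathbb{P}^1$ and expanding torus maps have no Siegel varieties (dense repelling cycles on the Julia set and super-attracting Fatou dynamics forbid $f^{n_j}\to\mathrm{Id}$ on any open set). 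Finally, even granting your framework, the proof is incomplete: the non-torsion torus translation case is explicitly left as a ``plan'' with three sketched sub-cases, so the argument does not close.
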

\begin{proof}
Assume by contradiction that $X\subset PC(f)$. By passing to an iterate of $f$ we may assume $X$ is contained in an invariant component $V$ of $PC(f)$. Let $\pi: \widehat{V} \rightarrow V$ be the normalization of
$V,$ and let $\hat{f}$ be the lift of $f$ on $\widehat{V}$. Then by Jonsson \cite{jonsson1998some} $\widehat{V}=\mathbb{P}^1$ or a torus, and $\hat{f}$ is a PCF map on $\mathbb{P}^1$ or an expanding torus map. We have the following commutative diagram
\[ \begin{tikzcd}
\widehat{V} \arrow{r}{\hat{f}} \arrow[swap]{d}{\pi} & \widehat{V} \arrow{d}{\pi} \\%
V \arrow{r}{f}& V
\end{tikzcd}
\] 
\par Let $U\subset \pi^{-1}(X)$ be an open set which is contained in the unramification locus of $\pi$. Then $U$ is a Siegel variety in $\widehat{V}$. It is well-known that a PCF map on $\mathbb{P}^1$ or a expanding torus map can not have such a Siegel variety, a contradiction.
\end{proof}
\medskip
\par Next we can prove Theorem 4.8, thus completes the proof of Theorem 1.1.
\begin{theorem}
Let $f$ be a PCF map on $\mathbb{P}^{2}$ of degree $\geq 2$. If $x\in J_1$ such that there is a Fatou disk containing $x$. Then $x$ must be contained  in the basin of a critical component cycle, or in the stable manifold of a super-saddle cycle.
\end{theorem}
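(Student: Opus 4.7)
The plan is to argue by contradiction. Assume $x \in J_1$ admits a Fatou disk but lies neither in the basin of a critical component cycle nor in the stable manifold of a super-saddle cycle. First I would apply Lemma 4.6 to produce a point $y \in \omega(x)$ and a Siegel variety $Y$ containing $y$: an irreducible analytic subset of $\mathbb{P}^2$ of positive dimension together with a subsequence $n_j \to \infty$ satisfying $f^{n_j}|_Y \to \mathrm{Id}$ uniformly. By Chow's theorem $Y$ is algebraic, and since $\deg f \geq 2$ precludes $Y = \mathbb{P}^2$ (a uniform limit of holomorphic self-maps of $\mathbb{P}^2$ has finite algebraic degree, whereas $\deg f^{n_j} = d^{n_j} \to \infty$), the variety $Y$ is an algebraic curve of some degree $e := \deg Y \geq 1$. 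Lemma 4.7 guarantees that $Y \not\subset PC(f)$, confirming that $Y$ is a genuine curve not already excluded by the normalization argument used to prove that lemma.

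The core of the argument is a volume/cohomology contradiction on the normalization $\pi: \widehat{Y} \to Y$, where $\widehat{Y}$ is a smooth compact Riemann surface. I would compute $\int_{\widehat{Y}}(f^{n_j} \circ \pi)^* \omega_{FS}$ in two incompatible ways. On the one hand, the cohomological identity $(f^{n_j})^*[\omega_{FS}] = d^{n_j}[\omega_{FS}]$ in $H^{1,1}(\mathbb{P}^2)$, together with the fact that $\pi$ is birational onto $Y$, yields
\[
\int_{\widehat{Y}}(f^{n_j} \circ \pi)^* \omega_{FS} \;=\; \int_Y (f^{n_j})^* \omega_{FS} \;=\; d^{n_j}\, e.
\]
On the other hand, uniform convergence $f^{n_j}|_Y \to \mathrm{Id}$ pulls back to uniform convergence $f^{n_j} \circ \pi \to \pi$ on the compact $\widehat{Y}$; Cauchy's integral formula upgrades this to $C^{\infty}$ convergence, so $(f^{n_j} \circ \pi)^* \omega_{FS}$ converges uniformly to $\pi^* \omega_{FS}$ as a $(1,1)$-form on $\widehat{Y}$, whence
\[
\int_{\widehat{Y}}(f^{n_j} \circ \pi)^* \omega_{FS} \;\longrightarrow\; \int_{\widehat{Y}} \pi^* \omega_{FS} \;=\; e.
\]
For $d \geq 2$, the first line forces $d^{n_j}\, e \to \infty$, flatly incompatible with the second line's finite limit $e$; this contradiction completes the proof of Theorem 4.8, and combined with Theorem 4.5 establishes Theorem 1.1.

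The step I expect to require the most attention is the careful handling of the singular locus of $Y$: the normalization $\pi$ is a biholomorphism only off finitely many points, and one must confirm that both integral identities survive this passage. They do, since the singular set has measure zero and the birational morphism $\pi$ preserves the integration currents involved. Once that bookkeeping is complete, the argument collapses to the one-line comparison above, which exhibits the essential reason why a holomorphic endomorphism of $\mathbb{P}^2$ of degree $\geq 2$ cannot support a Siegel variety: the cohomological dilation rate $d^{n_j}$ of the iterates is incompatible with the Riemannian contraction that $f^{n_j}|_Y \to \mathrm{Id}$ would demand on an algebraic curve.
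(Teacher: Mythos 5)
There is a genuine gap, and it sits at the first substantive step: the invocation of Chow's theorem. The Siegel variety $Y$ produced by Lemma 4.6 (via Lemma 2.8, which traces back to Ueda's Theorem 4.2) is only a \emph{local} analytic set --- it is constructed as the image of a small piece of a covering $\eta:Z\to W$ over a neighborhood $W$ of $y$, not as a closed analytic subset of $\mathbb{P}^2$. Indeed the paper explicitly treats it that way: after applying Lemma 4.7, the paper writes ``we may assume $X\cap PC(f)=\emptyset$ (shrink $X$ if necessary),'' which makes no sense if $X$ were a global algebraic curve (a curve meets $PC(f)$ by Bezout). Since $Y$ is not a closed analytic subset of $\mathbb{P}^2$, Chow's theorem does not apply, you cannot speak of $\deg Y$, and the entire cohomological/volume comparison collapses. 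Your instinct that if $Y$ \emph{were} a compact algebraic curve then the degree comparison $d^{n_j}e \to \infty$ versus $e$ would instantly yield a contradiction is correct; but that is exactly the compactness you do not have.

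This lack of compactness is precisely what the paper's lengthy argument compensates for. The paper lifts to the universal cover of $\mathbb{P}^2\setminus PC(f)$, uses the normal family of inverse branches $g_j$ to build a closed one-dimensional analytic set $Y_0\subset Z$ on which the covering map $\eta$ is a Fatou map, and then lifts further to $\mathbb{C}^3$ via the Green function of a homogeneous lift $F$. Because the Green function of the lift vanishes on the Fatou map's image, $\Phi(Y_0)$ is bounded, which makes the smooth locus $Y_1$ a hyperbolic Riemann surface uniformized by the disk. The contradiction is then extracted from the Fatou--Riesz radial-limit theorem applied to $\Phi\circ\kappa:\mathbb{D}\to\mathbb{C}^3$: almost-everywhere radial limits must land in $PC(F)\cup S$ with $S$ countable, and the F. and M. Riesz theorem then forces $\Phi\circ\kappa$ to be constant or to land in $PC(F)$, both impossible. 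In short, the paper replaces the unavailable global intersection-theoretic argument with a boundary-behavior argument on a noncompact Riemann surface; your proof, as written, skips the step where one must establish some form of global or boundary rigidity, and that step is the heart of the theorem.
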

\begin{proof}
Our argument borrows some ingredients in Ueda's paper \cite{uedacritically} and Le's paper \cite{le2019fixed}. We argue by contradiction. Suppose $x$ is not contained  in the basin of a critical component cycle, nor in the stable manifold of a super-saddle cycle. Then by Lemma 4.6, there is a point $y\in \omega(x)$ such that there is a Siegel variety $X$ containing $y$. By Lemma 4.7 we may assume $X\cap PC(f)=\emptyset$ (shrink $X$ if necessary).
\medskip

\par Let $Z$ be the universal covering space of $\mathbb{P}^2\setminus PC(f)$, and let $\eta:Z\to \mathbb{P}^2\setminus PC(f)$ be the universal covering map. Let $z_0\in Z$ satisfying $\eta(z_0)\in \phi(X)$. Recall that $\phi=\lim_{j\to\infty} f^{n_j}|_X$. By passing to a subsequence we may assume $k_j:=n_{j+1}-n_j\to+\infty$. Let $W$ be a neighborhood of $\eta(z_0)$ such that $W\cap PC(f)=\emptyset$, and let $Z_0$ be a neighborhood of $z_0$ such that $\eta:Z_0\to W$ is a biholomorphism.  Let $x_0\in X$ satisfying $f^{n_j}(x_0)\to \eta(z_0)$. Let $W_j$ be the connected component of $f^{-k_j}(W)$ containing $f^{n_j}(x_0)$. Let $g_j:Z_0\to W_j$ be the holomorphic map such that $f^{k_j}\circ g_j=\eta$ as in Corollary 2.6. Since $Z$ is simply connected, there is no obstruction to extend  $g_j$ as a globally defined map on $Z$. In the following $g_j$ will denote this globally defined map. By Lemma 2.7 $\left\{g_j\right\}$ is a  normal family. By shifting to a subsequence we may assume $\left\{g_j\right\}$ converges locally uniformly to a holomorphic map $g$.
\medskip
\par Let $V:=\eta^{-1}(X)\cap Z_0$. We claim that $g=\eta$ on $V.$ To show this, let $z\in V$ be an arbitrary point, we are going to show $g(z)=\eta(z)$. Let $x\in X$ satisfying $f^{n_j}(x)\to \eta(z)$.  Since for $j$ large enough we have $f^{n_{j+1}}(x)\in W$, we get $f^{n_j}(x)\in W_j$. Let $z_{j+1}\in Z_0$ such that $g_j(z_{j+1})=f^{n_j}(x)$.  By the equation $f^{k_j}\circ g_j=\eta$ we have that $\eta(z_{j+1})=f^{n_{j+1}}(x)$. Since $f^{n_{j+1}}(x) \to \eta(z)$, we get that $z_{j+1}\to z$. Now let $j\to+\infty$ in the equation $g_j(z_{j+1})=f^{n_j}(x)$ we get that $g(z)=\eta(z)$. Thus $g=\eta$ on $V.$
\medskip
\par The equation $g=\eta$ defines a closed analytic set $Y\subset Z$. Since $V\subset Y$, $Y$ has dimension one. Let $Y_0$ be an irreducible component of $Y_0$ of pure dimension 1. By Lemma 2.7 $g$ is a Fatou map, thus $\eta|_{Y_0}$ is a Fatou map since $g=\eta$ on $Y_0$. Since $Y_0$ is a closed analytic set in $Z$ and $\eta$ is a covering map, the following property holds: for every $x\in\mathbb{P}^2\setminus PC(f)$, there is a neighborhood $U$ of $x$ such that either $\eta^{-1}(U)\cap Y_0=\emptyset$ or every connected component of $\eta^{-1}(U)\cap Y_0$ is relatively compact in $Y_0$.
\medskip
\par Let $\pi:\mathbb{C}^{3}\setminus \left\{0\right\}\to \mathbb{P}^2$ be the canonical projection. Let $F$ be a homogeneous polynomial endomorphism on $\mathbb{C}^3$ which induce $f$. It is clear that $F$ is a PCF map on $\mathbb{C}^3$.  Let $G_F$ be the Green function of $F$. Recall that  $\eta:Y_0\to \mathbb{P}^2$ is a Fatou map. By Theorem 2.2, there exists a holomorphic lift  $\Phi:Y_0\to \mathbb{C}^{3}\setminus \left\{0\right\} $ of $\eta|_{Y_0}$ such that $\pi\circ\Phi=\eta$ and $G_F\circ \Phi$ is identically zero on $Y_0$. Since $\eta(Y_0)\cap PC(f)=\emptyset$ we get that $\Phi(Y_0)\cap PC(F)=\emptyset$. By the construction of $\Phi$ and by the fact that $\pi$ is an open map, the following property holds: for every $x\in\mathbb{C}^3\setminus PC(F)$, there is a neighborhood $U$ of $x$ such that either $\Phi^{-1}(U)\cap Y_0=\emptyset$ or every connected component of $\Phi^{-1}(U)\cap Y_0$ is relatively compact in $Y_0$. 
\medskip
\par Let $Y_1$ be the smooth part of $Y_0$, then $Y_1$ is a Riemann surface. Let $S:=\Phi(Y_0\setminus Y_1)$, then $S$ is a discrete set in $\mathbb{C}^3\setminus PC(F)$. Since $G_F\circ \Phi$ is identically zero on $Y_1$, $\Phi(Y_1)$ is contained in a bounded domain in  $\mathbb{C}^3$. This implies that $Y_1$ is a hyperbolic Riemann surface, i.e. there exists a universal covering map $\kappa:\mathbb{D}\to Y_1$, where $\mathbb{D}$ is the unit disk. 
Since $\kappa$ is a covering map, the following property holds: 
\par for every $x\in\mathbb{C}^3\setminus (PC(F)\cup S)$, there is a neighborhood $U$ of $x$ such that either $(\Phi\circ\kappa)^{-1}(U)\cap \mathbb{D}=\emptyset$ or every connected component of $(\Phi\circ\kappa)^{-1}(U)\cap \mathbb{D}$ is relatively compact in $\mathbb{D}$ ($\star$).
\medskip
\par We consider the map $\Phi\circ\kappa: \mathbb{D}\to \mathbb{C}^3$. The following idea of using Fatou-Riesz's theorem was first appeared in Le \cite{le2019fixed}. As we mentioned before $\Phi\circ\kappa (\mathbb{D})$ is contained in a bounded domain in $\mathbb{C}^3$. By the well known Fatou-Riesz's theorem (see Milnor \cite{milnor2011dynamics} Theorem A.3), for Lebesgue a.e. $\theta\in [0,2\pi)$, the radial limit
\begin{equation*}
\tau_\theta=\lim_{r\to 1^-}\Phi\circ\kappa(re^{i\theta})
\end{equation*}
exists for some $\tau_\theta\in\mathbb{C}^3$.
\medskip
\par We next show that for every limit $\tau_\theta$, $\tau_\theta\in PC(F)\cup S$. Assume by contradiction that $\tau_\theta\notin PC(F)\cup S$. Let $U$ be a small neighborhood of $\tau_\theta$, then by property $(\star)$, $(\Phi\circ\kappa)^{-1}(U)$ is a disjoint union of relatively compact open set in $\mathbb{D}$.  Then for every $R\in (0,1)$ there exists a $r$ satisfying $R<r<1$ such that $re^{i\theta}\notin (\Phi\circ\kappa)^{-1}(U)$ (by the connectedness of the interval $[R,1]$). This contradicts the fact that $\tau_\theta=\lim_{r\to 1^-}\Phi\circ\kappa(re^{i\theta})$. Thus $\tau_\theta\in PC(F)\cup S$.
\medskip
\par Since $S$ is a countable set, there must exist a positive Lebesgue measure set $E\subset [0,2\pi)$ such that for every $\theta\in E$ either $\lim_{r\to 1^-}\Phi\circ\kappa(re^{i\theta})\in PC(F)$ or $\lim_{r\to 1^-}\Phi\circ\kappa(re^{i\theta})=s$ for an element $s\in S$. We will show that these two cases are both impossible, then we will get a contradiction. 
\medskip
\par We first treat the  case that for every $\theta\in E$, $\lim_{r\to 1^-}\Phi\circ\kappa(re^{i\theta})\in PC(F)$. Let $P$ be a homogeneous polynomial that defines $PC(F)$, i.e.
\begin{equation*}
 PC(F)=\left\{x\in\mathbb{C}^3: P(x)=0\right\}.   
\end{equation*}
Then $P\circ\Phi\circ\kappa$ is a bounded holomorphic function on $\mathbb{D}$ such that the radial limit 
\begin{equation*}
\lim_{r\to 1^-}P\circ\Phi\circ\kappa(re^{i\theta})=0
\end{equation*}
for every $\theta\in E$, where $E$ has positive Lebesgue measure. By Fatou-Riesz's theorem $P\circ\Phi\circ\kappa$ is identically zero on $\mathbb{D}$. This implies $\Phi\circ \kappa(\mathbb{D})\subset PC(F)$ which imples $\Phi(Y_1)\subset PC(F)$, a contradiction.
\medskip
\par Second we treat the case that for every $\theta\in E$, $\lim_{r\to 1^-}\Phi\circ\kappa(re^{i\theta})=s$, where $s\in S$. Let $P_1$, $P_2$, $P_3$  be three linear polynomial in $\mathbb{C}^3$ that define $s$, i.e.
\begin{equation*}
 \left\{s\right\}=\left\{x\in\mathbb{C}^3: P_i(x)=0, i=1,2,3\right\}.   
\end{equation*}
Then for every $i$  we get that $P_i\circ\Phi\circ\kappa$ is a bounded holomorphic function on $\mathbb{D}$ such that the radial limit 
\begin{equation*}
\lim_{r\to 1^-}P\circ\Phi\circ\kappa(re^{i\theta})=0
\end{equation*}
for every $\theta\in E$, where $E$ has positive Lebesgue measure. By Fatou-Riesz's theorem, for every $i$ we get that $P_i\circ\Phi\circ\kappa$ is identically zero on $\mathbb{D}$. This implies $\Phi\circ \kappa(\mathbb{D})=\left\{s\right\}$ which imples $\Phi(Y_1)=\left\{s\right\}$, contradicts the fact that $Y_1$ has dimension 1. This completes the proof.
\end{proof}

\medskip
\par We end this section by showing the  sporadic super-saddle cycle is contained in $J_2$.
\begin{lemma}
	Let $f$ be a PCF map on $\mathbb{P}^{2}$ of degree $\geq 2$, then every sporadic super-saddle cycle is contained in $J_2$.
\end{lemma}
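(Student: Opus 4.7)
The plan is to produce points of $J_2$ arbitrarily close to the sporadic super-saddle fixed point $p$ (after replacing $f$ by an iterate so that $p$ is fixed); since $J_2$ is closed, this forces $p\in J_2$. The candidate points are chosen on the local unstable manifold $W^u_{\text{loc}}(p)$, a one-dimensional holomorphic disk tangent to the $\lambda$-eigenspace $E^u$ of $Df(p)$, on which $f$ restricts to a holomorphic self-map with $p$ as a repelling fixed point of multiplier $\lambda$. The strategy is to apply the contrapositive of Theorem 1.1 (1): any $q\in J_1$ that is neither in the basin of a critical component cycle nor in the stable manifold of a sporadic super-saddle cycle belongs to $J_2$.

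First, I verify that any $q\in W^u_{\text{loc}}(p)\setminus\{p\}$ close to $p$ automatically satisfies $q\in J_1$, and hence lies outside the basin of every critical component cycle (basins being contained in the Fatou set $\mathbb{P}^2\setminus J_1$). Indeed, for $q$ in the Koenigs linearization neighborhood of $f|_{W^u_{\text{loc}}(p)}$, the derivative of $f^n$ at $q$ restricted to $T_qW^u_{\text{loc}}(p)$ grows in modulus like $|\lambda|^n\to\infty$, so $\{f^n\}$ is not equicontinuous at $q$.

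The principal technical step is to verify that $q$ may be chosen outside the stable manifolds of all sporadic super-saddle cycles; by Lemma 2.9 these form a finite collection, whose stable manifolds constitute a finite union of immersed one-dimensional analytic submanifolds. For each sporadic super-saddle periodic point $p'$ I claim the intersection $W^u_{\text{loc}}(p)\cap W^s(p')$ is at most countable and discrete in $W^u_{\text{loc}}(p)$. If $p'=p$, then $E^u=T_pW^u(p)$ and $E^s=T_pW^s(p)$ are distinct eigenspaces of $Df(p)$, so the two curves meet transversally at $p$; moreover for $q\in W^u_{\text{loc}}(p)\setminus\{p\}$ close to $p$, the orbit $f^n(q)$ escapes along $W^u(p)$ and never converges to $p$, so $q\notin W^s(p)$. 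If $p'\neq p$, then the fixed orbit $f^n(p)=p\neq p'$ does not converge to $p'$, so $p\notin W^s(p')$; consequently the proper inclusion $W^u_{\text{loc}}(p)\cap W^s(p')\subsetneq W^u_{\text{loc}}(p)$, combined with the analyticity of each local piece of $W^s(p')$, forces the intersection to be discrete in each piece and hence countable discrete overall.

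The main obstacle is precisely this discreteness verification, which ultimately hinges on the elementary fact that a fixed point cannot lie in the stable manifold of a different fixed point, together with the distinct-eigenspace structure of $Df(p)$. Once this is in place, removing a countable discrete set from $W^u_{\text{loc}}(p)$ still leaves a dense subset of points near $p$ from which we select our $q$. Theorem 1.1 then yields $q\in J_2$, and letting $q\to p$ completes the proof that $p\in J_2$.
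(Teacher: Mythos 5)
Your strategy of invoking Theorem~1.1 in contrapositive form is genuinely different from the paper's argument, which works directly with the invariant component $V$ of $PC(f)$ through the sporadic super-saddle point $p$: since $V\not\subset C(f)$, the normalization $\hat f$ of $f|_V$ is a one-dimensional PCF map with the lift $\hat x$ of $p$ a repelling fixed point, so $\hat x$ is approximated by repelling periodic points of $\hat f$ whose images in $V$ avoid $C(f)$; by Le's theorem these are repelling in $\mathbb{P}^2$ and hence in $J_2$ by Theorem~1.8, giving $p\in J_2$ by closedness. That route never appeals to Theorem~1.1 at all. The object you call $W^u_{\mathrm{loc}}(p)$ is in fact (by Lemma~3.6 and uniqueness of the local unstable manifold) precisely a piece of this $V$, but you do not exploit that.

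The proposal as written has a fatal gap at the step where you dismiss the basins of critical component cycles: you assert they are ``contained in the Fatou set $\mathbb{P}^2\setminus J_1$.'' This is false, and it is in fact the central point of the paper that it is false. A critical component cycle is an attracting \emph{curve}, not an attracting point, its basin is attracted to a chaotic one-dimensional repeller, Daurat's theorem (cited in \S 1.2) shows the Green current $T$ has support inside such a basin, and Theorem~1.1 itself says $J_1\setminus J_2$ sits inside the union of these basins (up to the sporadic super-saddle stable manifolds). So these basins certainly meet $J_1$, and your deduction that $q\in J_1$ forces $q$ outside all such basins collapses. To repair it you would need to argue, for instance, that $W^u_{\mathrm{loc}}(p)$ lies on the $f$-invariant curve $V$ and that a point of $V$ attracted to a distinct algebraic curve $\Lambda\subset C(f)$ must have $\omega$-limit set in the finite set $V\cap\Lambda$, hence be preperiodic, a countable exclusion; but once you are doing that you are effectively reproducing the paper's use of the invariant curve $V$, at which point the direct approximation-by-repelling-periodic-points argument is shorter. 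I would also flag that your discreteness claim for $W^u_{\mathrm{loc}}(p)\cap W^s(p')$ is not fully justified as stated: transversality at $p$ controls only the local intersection at $p$, not homoclinic or heteroclinic intersections elsewhere in the local unstable disk; this is fixable by analyticity and a proper-intersection argument, but it is not the ``elementary fact that a fixed point cannot lie in the stable manifold of a different fixed point'' that you cite.
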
 
\begin{proof}
	Let $x$ be a super-saddle cycle which is not contained in a critical component cycle. Passing to an iterate of $f$ we may assume every $x$ is fixed and every periodic component of $PC(f)$ is invariant. Let $V$ be an invariant component of $PC(f)$ containing $x$, by our assumption $V\not\subset C(f)$. By Lemma 3.6, the eigenvalue $\lambda$ of $Df|_{T_x V}$ satisfies $|\lambda|>1$. Let $\pi: \widehat{V} \rightarrow V$ be the normalization of
	$V,$ and let $\hat{f}$ be the lift of $f$ on $\widehat{V}$. Let $\hat{x}\in \widehat{V}$ such that $\pi(\hat{x})=x$. Since $V$ is smooth at $x$, such a $\hat{x}$ is unique. Then $\hat{x}$ is a repelling fixed point of $\hat{f}$. Since $\widehat{V}$ is either $\mathbb{P}^1$ or a torus and $\hat{f}$ is either a rational function of degree $\geq 2$ or an expanding torus map, $\hat{x}$ can be approximated by a sequence of  periodic points $\left\{ \hat{y}_n\right\}\subset \widehat{V}$ such that $y_n=\pi(\hat{y}_n)$ is in the smooth part of $PC(f)$. Then by the result of Le \cite{le2019fixed}, $y_n$ is a  repelling periodic point of $f$ for every $n$. By Theorem 1.8, $y_n\in J_2$. Since $y_n\to x$ we get $x\in J_2$ as desired.
\end{proof}
\medskip
\section{Some corollaries of the main results}
In this section we prove  Theorem 1.4, Corollary 1.5, Theorem 1.6 and Corollary 1.7.
\medskip
\subsection{Non-wandering set for PCF maps on $\mathbb{P}^2$}
Before proving Theorem 1.4, we prove the following lemma.

\medskip
\begin{lemma}
Let $f$ be a PCF map on $\mathbb{P}^{2}$ of degree $\geq 2 . $ Let $C$ be an invariant critical component.  Then $\Omega(f)\cap C$ is the union of super-attracting cycles and $J_1\cap C$, moreover periodic points are dense in $\Omega(f)\cap C$.
\end{lemma}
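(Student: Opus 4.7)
The plan is to pull back to the normalization $\pi:\widehat{C}\to C$, exploit the well-understood one-dimensional dynamics of the lift $\hat{f}:=\pi^{-1}\circ f|_C\circ\pi$, and transport the conclusions to $\mathbb{P}^2$. By \cite{jonsson1998some}, $\widehat{C}=\mathbb{P}^1$ and $\hat{f}$ is a PCF rational map of degree $\geq 2$; for such a map $F(\hat{f})$ is the union of basins of super-attracting cycles, $J(\hat{f})$ is the closure of repelling periodic points, $\Omega(\hat{f})=J(\hat{f})\cup\{\text{super-attracting cycles of }\hat{f}\}$, and periodic points are dense in $\Omega(\hat{f})$.

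The central step will be the identification $J_1\cap C=\pi(J(\hat{f}))$. The key observation is that for any periodic point $z\in C$ of period $p$ with lift $\hat{z}$, the differential $Df^p(z)$ is non-invertible (since $C\subset C(f)$) and preserves $T_z C$ with eigenvalue $(D\hat{f}^p)(\hat{z})$ there; its two eigenvalues are thus $0$ and $(D\hat{f}^p)(\hat{z})$. Repelling periodic points of $\hat{f}$ therefore project to super-saddle periodic points of $f$ on $\mathbb{P}^2$, which lie in $J_1$ because the expanding tangential direction destroys normality of $\{f^n\}$ at such a point; passing to closures gives $\pi(J(\hat{f}))\subset J_1\cap C$. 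For the reverse inclusion, if $y\in C\setminus\pi(J(\hat{f}))$ then every lift $\hat{y}$ lies in a basin of some super-attracting cycle of $\hat{f}$, which is by the same computation super-attracting for $f$ on $\mathbb{P}^2$ (both eigenvalues vanish); the intertwining $\pi\circ\hat{f}=f\circ\pi$ then places $y$ in the corresponding $f$-basin, so $y\in F(f)$.

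Granting this identification, the description of $\Omega(f)\cap C$ follows quickly. The projections of repelling periodic points of $\hat{f}$ form a dense set of (super-saddle) periodic points of $f$ in $J_1\cap C$; these are non-wandering and $\Omega(f)$ is closed, so $J_1\cap C\subset\Omega(f)$. Super-attracting cycles of $f$ contained in $C$ are trivially non-wandering. Conversely, if $x\in\Omega(f)\cap C$ is not in $J_1$, then $x$ lies in the basin of some super-attracting cycle of $f$; forward-invariance and closedness of $C$ force the cycle to lie in $C$, and a small neighborhood of $x$ disjoint from the cycle has its iterates eventually trapped in a tiny neighborhood of the cycle and thus disjoint from the original neighborhood, contradicting $x\in\Omega(f)$ unless $x$ itself is on the cycle. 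Density of periodic points in $\Omega(f)\cap C$ is then immediate from density of repelling periodic points in $J(\hat{f})$.

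I do not anticipate a serious obstacle. The one slightly delicate point is the eigenvalue computation at a singular point of $C$, where $\pi$ is ramified and more than one local branch of $C$ passes through $z$; there the correct reading goes through the germwise version stated in Lemma~3.6, applied branch by branch. Since $\pi$ is finite and proper and only finitely many periodic points of $\hat{f}$ can project to singular points of $C$, this causes no real difficulty.
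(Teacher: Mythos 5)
Your approach is structurally the same as the paper's: pass to the normalization $\pi:\widehat{C}\to C$, use Jonsson's result that $\hat{f}$ is a PCF rational map, and reduce the lemma to the identification $J_1\cap C=\pi(J(\hat{f}))$. The first inclusion (repelling points of $\hat{f}$ map to super-saddle points of $f$, hence to $J_1$) is fine. But your ``key observation'' --- that the eigenvalues of $Df^p(z)$ are $0$ and $(D\hat{f}^p)(\hat{z})$ --- is only correct when $(D\hat{f}^p)(\hat{z})\neq 0$: in that case the product of the two eigenvalues is forced to vanish by non-invertibility, so the remaining eigenvalue is $0$. When $(D\hat{f}^p)(\hat{z})=0$ (the super-attracting case you need for the reverse inclusion), the determinant condition is already saturated by the tangential $0$ and tells you nothing about the transverse eigenvalue. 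A priori that transverse eigenvalue could have modulus $>1$, which by Le's trichotomy would make $\pi(z)$ a super-saddle fixed point with the zero eigendirection along $T_{\pi(z)}C$. In that scenario $\pi(z)\in J_1$ and the basin of $\hat{z}$ would not push down into the Fatou set of $f$, so your reverse inclusion $J_1\cap C\subset\pi(J(\hat{f}))$ would break down. This is not a cosmetic omission; it is precisely the step the paper supplies separately.

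The paper closes this gap by invoking Le's Proposition 5.4: at a super-saddle periodic point of a PCF map, the invariant branch of $PC(f)$ through the point is smooth \emph{and} tangent to the expanding eigendirection. Hence if $\pi(z)$ were super-saddle, the eigenvalue of $f|_C$ at $\pi(z)$ --- equivalently $D\hat{f}^p(\hat{z})$ --- would have modulus $>1$, contradicting that $\hat{z}$ is super-attracting for $\hat{f}$. Since $\pi(z)$ also cannot be repelling (it lies on $C(f)$), it must be super-attracting, which is the conclusion you asserted. You should replace ``by the same computation, both eigenvalues vanish'' with this argument, or an equivalent appeal to Le's result. Your treatment of the non-wandering set once the identification is in hand, and your remark about singular points (finitely many, handled via the germwise Lemma 3.6), are correct and match the paper.
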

\begin{proof}
 Let $\pi: \widehat{C} \rightarrow C$ be the normalization of
$V,$ and let $\hat{f}$ be the lift of $f$ on $\widehat{C}$. Then $\widehat{C}=\mathbb{P}^1$, and $\hat{f}$ is  a rational function of degree $\geq 2$, see \cite{Fornaess1994complex}. Moreover by Jonsson \cite{jonsson1998some}, $\hat{f}$ is also a PCF map.
 We have the following commutative diagram.
\[ \begin{tikzcd}
\widehat{C} \arrow{r}{\hat{f}} \arrow[swap]{d}{\pi} & \widehat{C} \arrow{d}{\pi} \\%
C \arrow{r}{f}& C
\end{tikzcd}
\]
\par It is clear that if  $p$ is a periodic point of $\hat{f},$ then $\pi(p)$ is a periodic point of $f.$ 
 Let $J(\hat{f})$ be the Julia set of $\hat{f}$. Since periodic points of $\hat{f}$  are dense in $J(\hat{f})$, periodic points of $f$ are dense in $\pi(J(\hat{f})$.
  We claim that $J_1\cap C=\pi(J(\hat{f})$. By the above discussion $\pi(J(\hat{f})$ is contained in the closure of periodic points. These periodic points must be super-saddle. Thus $\pi(J(\hat{f})\subset J_1\cap C$. To show $J_1\cap C\subset \pi(J(\hat{f})$, it is equivalent to show that for every $y$ in the Fatou set of $\hat{f}$, $\pi(y)$ is in the Fatou set of $f$. Since $\hat{f}$ is a PCF map, $y$ is contained in the basin of a super-attracting cycle of $\hat{f}$. For every super-attracting fixed point $z$ of $\hat{f}$, $\pi(z)$ is a fixed point of $f$. It is clear that $\pi(z)$ is not repelling, since $\pi(z)\in C(f)$.  It is also clear that $\pi(z)$ is not super-saddle, since otherwise $\pi(z)$ is a smooth point of $C$ (see Le \cite{le2019fixed} Proposition 5.4), and $z$ must be a repelling point of $\hat{f}$. Thus $\pi(z)$ must be  a super-attracting fixed point of $f$. Thus $\pi(y)$ is in the basin of $\pi(z)$, this implies $\pi(y)$ is in the Fatou set of $f$. Thus $J_1\cap C=\pi(J(\hat{f})$ as desired. Since $\pi(J(\hat{f})$ is contained in the closure of periodic points, $J_1\cap C$ is contained in $\Omega(f)$. Since the Fatou set of $f$ is the union of basins of super-attracting cycles, we get that $\Omega(f)\cap C$ is exactly the union of super-attracting cycles and $J_1\cap C$. This implies periodic points are dense in $\Omega(f)\cap C$.
  
\end{proof}

\medskip
\par Now we can prove Theorem 1.4. Recall the statement.
\begin{theorem}
Let $f$ be a PCF map on $\mathbb{P}^2$ of degree $\geq 2$. Then $J_2$ is the closure of repelling
periodic points. If further assumed there is no sporadic super-saddle cycle, then $\Omega(f)$ is the closure of periodic points and $\Omega(f)\setminus J_2$ is  the union of super-attracting cycles together with $\cup (C\cap J_1)$, where $C$ ranges over the set of critical component cycles, in particular  $J_2$ is open in $\Omega(f)$.
\end{theorem}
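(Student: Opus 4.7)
\medskip

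\noindent\textbf{Proof proposal.} The first assertion is immediate: combine Briend--Duval's theorem (which yields $J_2\subset\overline{\mathrm{Rep}(f)}$) with Theorem 1.8 above (which provides the reverse inclusion $\mathrm{Rep}(f)\subset J_2$ for PCF maps on $\mathbb{P}^2$). The substantive content lies in the second assertion.

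For the second assertion, assuming no sporadic super-saddle cycle, my plan is to decompose $\mathbb{P}^2$ into three $f$-invariant pieces and determine $\Omega(f)$ on each. On $J_2$, since $J_2=\overline{\mathrm{Rep}(f)}$ and repelling periodic points are non-wandering, one has $\Omega(f)\cap J_2=J_2$. On the Fatou set, which by Fornaess--Sibony, Ueda and Rong is exactly the union of the basins of super-attracting cycles, a standard argument on attracting basins gives that the only non-wandering points are the super-attracting cycles themselves. The remaining piece $J_1\setminus J_2$ is, by Theorem 1.1 applied under our hypothesis, contained in the union $\bigcup_C\mathcal{B}(C)$ of basins of critical component cycles.

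The main step I expect to handle carefully is to show $\Omega(f)\cap\mathcal{B}(C)\subset C$ for each invariant critical component $C$. Take $x\in\mathcal{B}(C)\setminus C$. First, $x$ cannot be periodic, because a periodic orbit lying in $\mathcal{B}(C)$ must sit on $C$ (its forward iterates are both periodic and attracted to $C$). Let $d:=\mathrm{dist}(x,C)>0$ and pick a neighborhood $U_0$ of $x$ of diameter less than $d/3$ and at distance greater than $2d/3$ from $C$. Since $f^n(x)\to C$, for all $n$ larger than some $N$, $f^n(U_0)$ lies in the $d/3$-neighborhood of $C$ and is thus disjoint from $U_0$; for each of the finitely many $n=1,\dots,N$, non-periodicity of $x$ combined with continuity of $f^n$ lets us shrink $U_0$ further so that $f^n(U)\cap U=\emptyset$. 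Intersecting over $n=1,\dots,N$ produces a single such $U$, so $x$ is wandering. Combined with Lemma 5.1, which identifies $\Omega(f)\cap C$ as the union of super-attracting cycles on $C$ with $J_1\cap C$ and establishes density of periodic points there, and with the fact that $C\cap J_2=\emptyset$ (since the basin of a critical component cycle is disjoint from $J_2$ by \cite{taflin2018attracting}), this yields $\Omega(f)\cap(J_1\setminus J_2)=\bigcup_C(C\cap J_1)$.

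Assembling the three pieces produces the claimed description of $\Omega(f)\setminus J_2$. Density of periodic points in $\Omega(f)$ then follows from the density of repelling periodic points in $J_2$, the tautological density of super-attracting cycles in themselves, and the density statement of Lemma 5.1 on each $C\cap J_1$. Openness of $J_2$ in $\Omega(f)$ is immediate, since $\Omega(f)\setminus J_2$ is a finite union of super-attracting cycles and compact pieces $C\cap J_1$, each bounded away from $J_2$. Everything but the wandering argument on $\mathcal{B}(C)\setminus C$ is a packaging of Theorem 1.1, Theorem 1.8 and Lemma 5.1.
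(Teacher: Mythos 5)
Your proposal is correct and follows essentially the same route as the paper: deduce $J_2=\overline{\mathrm{Rep}(f)}$ from Theorem 1.8 together with Briend--Duval, then locate any $x\in\Omega(f)\setminus J_2$ via Theorem 1.1 and the Fornaess--Sibony/Ueda/Rong description of the Fatou set, and finish with Lemma 5.1. The wandering-neighborhood argument you carry out on $\mathcal{B}(C)\setminus C$ (relying, as it should, on the trapping region $f(U_\epsilon)\subset\subset U_\epsilon$ to make the uniform convergence $f^n(U_0)\to C$ legitimate, not merely pointwise convergence of the orbit of $x$) is precisely what the paper leaves implicit when it asserts that a non-wandering point in the basin of a critical component cycle must lie on the cycle itself.
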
 
\begin{proof}
By Theorem 1.8, $J_2$ is the closure of the set of repelling
periodic points. In the following we assume there is no sporadic super-saddle cycle.  First we show that $\Omega(f)$ is the closure of periodic points. Let $x\in \Omega(f)$ such that $x\notin J_2$. By Theorem 1.1 and the fact that Fatou set are only super-attracting basins, either $x$ is contained in the basin of a super-attracting cycles, or $x$ is contained in the basin of a critical component cycle. In the first case it is clear that $x$ must be a super-attracting periodic point. In the second case, $x$ must be contained in a critical component cycle, thus $x$ is contained in the closure of periodic points by Lemma 5.1. Again by Lemma 5.1, for every critical component $C$ we have $\Omega(f)\cap C$ is the union of super-attracting cycles and $J_1\cap C$. Thus $\Omega(f)\setminus J_2$ is  the union of super-attracting cycles together with $\cup (C\cap J_1)$, where $C$ ranges over the set of critical component cycles.
\end{proof}
\medskip
\subsection{Laminarity of the Green current for PCF maps on $\mathbb{P}^2$}
Recall the statement of Corollary 1.5.
\begin{corollary}[de Th\'elin]
Let $f$ be a PCF map on $\mathbb{P}^2$ of degree $\geq 2$. Then the Green current $T$ is laminar in $J_1\setminus J_2$.
\end{corollary}
\begin{proof}
Let $E$ be the union of sporadic super-saddle cycles. Let $W$ be the union of stable manifolds of super-saddle cycles in $E$. Then $W$ is a pluri-polar set, and $T$ put no mass on $W$ (since the local potential of $T$ is H\"older continuous, see \cite{dinh2010dynamics}). By Theorem 1.1, $T|_{J_1\setminus J_2}$ put full mass on the union  of the basins of critical component cycles. By Daurat \cite{daurat2016hyperbolic}, $T$ is laminar in every basin of critical component cycle. Thus $T$ is laminar in $J_1\setminus J_2$.
\end{proof}
\medskip
\subsection{Expanding sets for PCF maps on $\mathbb{P}^2$}
Recall the statement of Theorem 1.6. The following proof is an adaption of the argument in Maegawa \cite{maegawa2005fatou}.
\begin{theorem}
Let $f$ be a PCF map on $\mathbb{P}^2$ of degree $\geq 2$ and let $K$ be an invariant compact set. Then $K$ is expanding if and only if $K$ does not contain critical points.
\end{theorem}
\begin{proof}
The only if part is obvious. In the following we prove $K$ is expanding if  $K$ does not contain critical points. To prove that $K$ is an expanding set, it is equivalent to prove the following limit tends to infinity:
\begin{equation*}
\min_{x\in K}\min_{v\in T_x\mathbb{P}^2, |v|=1}|D(f^n)(v)|\to+\infty, \; n\to+\infty.
\end{equation*}

\par Assume by contradiction that the limit does not go to infinity. Then there exists a constant $C>0$ and a sequence of points $\left\{x_{m}\right\}\subset K$ and vectors $\left\{v_{m}\right\}\subset T_K\mathbb{P}^2$ such that  $|v_{m}|=1$ and $|D(f^{n_m})_{x_m}(v_m)|<C$, where $\left\{ n_m \right\}$ is a sequence of integers. By shifting to a subsequence we may assume $f^{n_m}(x_m)\to x\in K$. Since $K$ is invariant and $K\cap C(f)=\emptyset$, by Lemma 2.4 we know that $x$ is a point of bounded ramification. Let $W$ be a neighborhood of $x$ and let $\eta:Z\to W$ be the branched covering constructed in Corollary 2.6. Let $W_m$ be the connected component of $f^{-n_m}(W)$ containing $x_m$. Let $g_m:Z\to W_m$ be the holomorphic map such that $f^{n_m}\circ g_m=\eta$. Then by Lemma 2.7 by shifting to a subsequence we may assume $\left\{g_m\right\}$ locally uniformly converges to a Fatou map $g$. Since $K$ is an invariant compact set and $K\cap C(f)=\emptyset$, by Theorem 1.1 we get $K\subset J_2$. Again by Theorem 1.1 and the fact that $K\cap C(f)=\emptyset$, there is no Fatou disk containing points in $K$. Thus $g$ must be a constant map. Thus we have $\text{diam}\;W_m\to 0$. Since $K\cap C(f)=\emptyset$, $W_m\cap C(f)=\emptyset$ for $m$ large enough. Thus $f^{n_m}:W_m\to W$ is a biholomorphism when $m$ large. We can then define $g'_m:W\to W_m$ be the inverse map of $f^{n_m}|_{W_m}$, and it easily follows that $g'_m$ converges locally uniformly to a constant map. This implies
\begin{equation}
\max_{y\in U}\max_{v\in T_y \mathbb{P}^2, |v|=1} |Dg'_m(v)|\to 0,\;m\to +\infty,
\end{equation}
where $U$ is a small neighborhood of $x$. 
\medskip
\par By the definition of $g'_m$ (5.1) is equivalent to the following
\begin{equation}
\min_{z\in U_m }\min_{v\in T_z\mathbb{P}^2, |v|=1}|D(f^{n_m})(v)|\to+\infty, \; m\to+\infty,
\end{equation}
where $U_m:=f^{-n_m}(U)\cap W_m$.
\medskip
\par By our construction of $\left\{x_m\right\}$ and $\left\{v_m\right\}$, it follows  that $x_m\in U_m$ for $m$ large and
\begin{equation*}
  |D(f^{n_m})_{x_m}(v_m)|<C.  
\end{equation*} 
This contradicts (5.2). Thus the proof is complete.
\end{proof}
\medskip
\subsection{PCF maps on $\mathbb{P}^2$ that are expanding on $J_2$ or Axiom A }
We first discuss the PCF maps on $\mathbb{P}^2$ that are expanding on $J_2$. Recall the first part of the statement of Corollary 1.7.
\begin{corollary}
Let $f$ be a PCF map on $\mathbb{P}^2$ of degree $\geq 2$. Then $f$ is expanding on $J_2$ if and only if every critical component of $f$ is preperiodic to a  critical component cycle.
\end{corollary}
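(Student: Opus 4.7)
The plan is to reduce both directions to Theorem 1.6 by proving the equivalent statement $C(f) \cap J_2 = \emptyset$, using attracting-basin theory for the ``if'' direction and a periodic-point construction inside cycles of $PC(f)$ for the ``only if'' direction.

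For the ``if'' direction, suppose every critical component is preperiodic to a critical component cycle and let $p \in C(f)$. Then $p$ lies in some critical component $\Lambda_0$, and by hypothesis there is $m \geq 0$ such that $f^m(\Lambda_0)$ is contained in a critical component cycle $\mathcal{C}$. In particular $f^m(p) \in \mathcal{C} \subset \mathcal{B}(\mathcal{C})$, and since the basin of this attracting set is disjoint from $J_2$ (Taflin, as recalled in Section 1.2), $f^m(p) \notin J_2$. Forward invariance of $J_2$ gives $p \notin J_2$, so $C(f) \cap J_2 = \emptyset$, and Theorem 1.6 applied to the invariant compact set $J_2$ shows that $f$ is expanding on $J_2$.

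For the ``only if'' direction, assume $f$ is expanding on $J_2$, so $C(f) \cap J_2 = \emptyset$ by Theorem 1.6, and suppose for contradiction that some critical component $\Lambda_0$ is not preperiodic to a critical component cycle. Since $PC(f)$ has finitely many irreducible components and they are permuted by $f$, the orbit $\{f^n(\Lambda_0)\}_{n \geq 0}$ is eventually periodic; let $\{V_1, \dots, V_k\}$ be its terminal cycle, with $f(V_i) = V_{i+1}$ (indices mod $k$), and pick $m$ with $f^m(\Lambda_0) = V_1$. By our assumption no $V_i$ is an irreducible component of $C(f)$, hence each intersection $V_i \cap C(f)$ is finite. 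If $x \in V_1 \cap J_2$, then full invariance of $J_2$ produces a preimage $y \in \Lambda_0 \cap J_2$ of $x$ under $f^m$, contradicting $C(f) \cap J_2 = \emptyset$; thus $V_1 \cap J_2 = \emptyset$.

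To close the contradiction I exhibit a repelling periodic point of $f$ inside $V_1$. Let $\pi : \widehat{V}_1 \to V_1$ be the normalization and $\widehat{f^k}$ the induced self-map; by Jonsson \cite{jonsson1998some} (as used in Lemma 4.7 applied to $f^k$), $\widehat{V}_1$ is $\mathbb{P}^1$ or a torus and $\widehat{f^k}$ is either a PCF rational map of degree $\geq 2$ or an expanding torus endomorphism, so repelling periodic points of $\widehat{f^k}$ are dense in its infinite Julia set. Since $\mathrm{Sing}(V_1) \cup (V_1 \cap C(f))$ is a finite subset of $V_1$, I choose a repelling periodic point $\hat{y}$ of $\widehat{f^k}$ whose image $y := \pi(\hat{y})$ is a smooth point of $V_1$ lying outside $C(f)$. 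Then $y$ is periodic for $f^k$; the line $T_y V_1$ is $Df^k_y$-invariant, and the corresponding eigenvalue equals the derivative of $\widehat{f^k}$ at $\hat{y}$, so has modulus $> 1$; moreover $y \notin C(f)$ makes the transverse eigenvalue of $Df^k_y$ non-zero. By Le's classification for PCF maps on $\mathbb{P}^2$, every periodic point is repelling, super-saddle, or super-attracting, so these two constraints force $y$ to be repelling. Theorem 1.8 now gives $y \in J_2$, contradicting $V_1 \cap J_2 = \emptyset$. The delicate step is precisely this descent: one must verify that the density of repelling periodic points of $\widehat{f^k}$ in $J(\widehat{f^k})$ survives the restriction to smooth points of $V_1$ outside $C(f)$, after which Le's eigenvalue rigidity together with Theorem 1.8 closes the argument.
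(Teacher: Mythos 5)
Your overall strategy is the same as the paper's: reduce both directions to the criterion $C(f)\cap J_2=\emptyset$ via Theorem~1.6, and for the ``only if'' direction, produce a repelling periodic point inside a periodic non-critical component $V_1$ of $PC(f)$ and push it into $J_2$ with Theorem~1.8. The ``if'' direction and the reduction to $V_1 \cap J_2 = \emptyset$ are fine.

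There is, however, a genuine gap in the step where you rule out the super-saddle alternative. You write that ``$y\notin C(f)$ makes the transverse eigenvalue of $Df^k_y$ non-zero.'' Two problems: first, $y$ need not be a fixed point of $f^k$ — if $\hat y$ has period $p\ge 2$ under $\widehat{f^k}$, the relevant differential is $Df^{kp}_y$; second, and more seriously, $Df^{kp}_y$ is a composition $Df_{f^{kp-1}(y)}\circ\cdots\circ Df_y$, and invertibility of $Df_y$ alone does not prevent a zero eigenvalue — you need the \emph{entire $f$-orbit} of $y$ to avoid $C(f)$, not just $y$ itself. As stated, the argument does not preclude a super-saddle point sitting at a smooth, non-critical point of $V_1$ whose forward orbit meets $C(f)$ somewhere else in the cycle $V_1,\dots,V_k$.

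The gap is patchable. The orbit of $y$ lies in $\bigcup_i V_i$, each $V_i\cap C(f)$ is finite, and only finitely many periodic cycles of $\widehat{f^k}$ can project onto this finite bad set; since repelling periodic points of $\widehat{f^k}$ are dense (in $J(\widehat{f^k})$, resp.\ in the torus), you may choose $\hat y$ so that the whole orbit of $y$ avoids $C(f)\cup\mathrm{Sing}(V_1)\cup(\text{other components of }PC(f))$. Alternatively — and this is closer in spirit to how the paper handles it (cf.\ the proof of Lemma~4.9) — choose $y$ to be a smooth point of $PC(f)$ and invoke Theorem~3.4: at a smooth point of $PC(f)$ the single branch is trivially smooth and transversal, so a super-saddle periodic point there would lie in a critical component cycle, contradicting $V_1\not\subset C(f)$; combined with the tangential eigenvalue of modulus $>1$ ruling out super-attraction, Le's trichotomy forces $y$ to be repelling. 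Either fix closes the argument and recovers the paper's conclusion.
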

\begin{proof}
We first show the if part. If every critical component of $f$ is preperiodic to a  critical component cycle, then $C(f)$ is contained in the basins of critical component cycles. Thus $J_2\cap C(f)=\emptyset$ and $J_2$ is expanding follows form Theorem 1.6.
\medskip
\par Next we show the only if part. Assume by contradiction that there is a critical component $C$ which is not preperiodic to a critical component cycle. Let $V$ be a periodic component of $PC(f)$ such that $V\not\subset C(f)$. Then by the argument in the proof of Lemma 5.1, there is a repelling periodic point $x\in V$. By Theorem 1.8 we know that $x\in J_2$. Let $y\in C(f)$ such that $f^n(y)=x$ for some $n>0$. Then $y\in J_2$ since $J_2$ is totally invariant. Thus $J_2\cap C(f)\neq \emptyset$. Thus $J_2$ is not expanding, which is a contradiction.  This completes the proof.
\end{proof}
\medskip
Next we introduce the notion of Axiom A for holomorphic endomorphisms on $\mathbb{P}^k$. (For the general definition for smooth endomorphisms on Riemannian manifold, see for instance \cite{qian1995ergodic}.)
\begin{definition}
Let $f$ be a holomorphic endomorphism on $\mathbb{P}^2$ of degree $\geq 2$. Then $f$ satisfies the Axiom A if the following two conditions hold:
\medskip
\par (1) The non-wandering set $\Omega(f)$ is hyperbolic.
\medskip
\par (2) $\Omega(f)$ is the closure of periodic points.
\end{definition}
\medskip
We give the following characterization of Axiom A for PCF maps on $\mathbb{P}^2$. Some interesting examples of Axiom A PCF maps can be found in Ueno \cite{ueno2007dynamics}.
\begin{corollary}
Let $f$ be a PCF map on $\mathbb{P}^2$ of degree $\geq 2$. Then $f$ is Axiom A if and only if $f$ is expanding on $J_2$, and for every critical component cycle $C$, $C\cap J_1$ is a hyperbolic (saddle) set.
\end{corollary}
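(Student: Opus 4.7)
The plan is to verify the two implications separately. The backward direction will rely on Theorem 1.4 to exhibit $\Omega(f)$ as a disjoint union of hyperbolic pieces, while the forward direction will use Briend-Duval's positivity of Lyapunov exponents to upgrade hyperbolicity on $J_2$ to expansion.

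For the forward direction, assume $f$ is Axiom A, so $\Omega(f)$ is hyperbolic and its closed invariant subset $J_2$ inherits by restriction a continuous splitting $T_{J_2}\mathbb{P}^2 = E^s \oplus E^u$ with uniform contraction on $E^s$ and expansion on $E^u$. Briend-Duval \cite{briend1999exposants} asserts that both Lyapunov exponents of the measure of maximal entropy $\mu$ are at least $\tfrac{1}{2}\log d > 0$. Since $\mathrm{supp}\,\mu = J_2$, the Oseledets decomposition at $\mu$-a.e. point must be compatible with the continuous splitting $E^s \oplus E^u$; in particular, any non-zero vector in $E^s$ would have a strictly negative Lyapunov exponent, contradicting positivity of both exponents of $\mu$. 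Hence $E^s = 0$ throughout $J_2$, i.e. $f$ is expanding on $J_2$. For the second condition, Lemma 5.1 shows that $C \cap J_1 \subseteq \Omega(f)$ for every critical component cycle $C$, so the hyperbolic splitting on $\Omega(f)$ restricts to one on $C \cap J_1$, making it a hyperbolic set.

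For the backward direction, assume $f$ is expanding on $J_2$ and each $C \cap J_1$ is hyperbolic. I would first rule out sporadic super-saddle cycles: a super-saddle point has a zero eigenvalue of $Df$, hence lies in $C(f)$, and by Lemma 4.9 any sporadic super-saddle cycle is contained in $J_2$; but expansion on $J_2$ forbids critical points in $J_2$, a contradiction. Theorem 1.4 then applies and yields
\[
\Omega(f) = J_2 \,\cup\, S \,\cup\, \bigcup_{C} (C \cap J_1),
\]
where $S$ is the finite union of super-attracting cycles, the last union runs over critical component cycles, and periodic points are dense in $\Omega(f)$. The three pieces are pairwise disjoint and compact: the super-attracting cycles lie in the Fatou set while $J_2$ and $\bigcup_C (C \cap J_1)$ lie in $J_1$, and $\bigcup_C (C \cap J_1) \subset C(f)$ is disjoint from the expanding set $J_2$. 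Each piece is hyperbolic: $J_2$ via $E^u = T\mathbb{P}^2$, each super-attracting cycle trivially via $E^s = T\mathbb{P}^2$, and each $C \cap J_1$ by hypothesis. Gluing the three splittings on these pairwise disjoint closed sets produces a continuous hyperbolic structure on $\Omega(f)$, and combined with density of periodic points this is exactly Axiom A.

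The step I expect to be the main obstacle is the forward implication's deduction that a hyperbolic splitting over $J_2$ forces expansion, because it requires carefully identifying the continuous stable bundle with a negative-exponent subspace of the Oseledets decomposition and invoking Briend-Duval's positivity of both Lyapunov exponents. The backward direction is then essentially organizational, assembling the three pieces furnished by Theorem 1.4 and exploiting that they are automatically separated by the critical set.
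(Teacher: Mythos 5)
Your proposal is correct and follows the same general strategy as the paper's proof, namely to invoke Theorem 1.4 to decompose $\Omega(f)$ into $J_2$, the super-attracting cycles, and the sets $C\cap J_1$, and then match hyperbolicity of $\Omega(f)$ with the stated conditions on the pieces. But you supply two substantive details that the paper's own one-paragraph proof glosses over. First, in the forward direction the paper jumps from ``$\Omega(f)$ hyperbolic'' to ``$J_2$ expanding'' without explanation; a priori the restriction of the hyperbolic splitting to the closed invariant subset $J_2$ could have a non-trivial stable bundle, and your Briend--Duval argument (both exponents of $\mu$ are $\geq\tfrac12\log d$, so $E^s$ must vanish on a full-measure hence dense subset of $J_2=\operatorname{supp}\mu$, hence everywhere by continuity of the splitting) is exactly what is needed to close that gap. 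Second, the paper applies Theorem 1.4's statement that $\Omega(f)$ is the closure of periodic points and its decomposition of $\Omega(f)\setminus J_2$ without noting that these conclusions require the absence of sporadic super-saddle cycles; you handle this explicitly in the backward direction by observing that a super-saddle point lies in $C(f)$, that Lemma 4.9 places any sporadic super-saddle cycle inside $J_2$, and that expansion on $J_2$ is incompatible with critical points in $J_2$ (Theorem 1.6), so no sporadic super-saddle cycle can exist under the hypotheses and Theorem 1.4 applies. The only cosmetic caveat is that in the forward direction you also implicitly need the closure-of-periodic-points clause (part of the Axiom A definition itself, so it holds automatically there without invoking Theorem 1.4). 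Overall your version is more complete than the one printed in the paper.
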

\begin{proof}
By Theorem 1.4 we know that $\Omega(f)$ is the closure of periodic points. Thus $f$ is Axiom A if and only if $f$ is hyperbolic on $\Omega(f)$. By Theorem 1.4, $\Omega(f)\setminus J_2$ is   the union of super-attracting cycles and $C\cap J_1$, where $C$ is a critical component cycle. Thus $f$ is Axiom A if and only if $f$ is expanding on $J_2$, and for every critical component cycle $C$, $C\cap J_1$ is a hyperbolic (saddle) set (since super-attracting cycle is automatically a unstable dimension $0$ hyperbolic set). 
\end{proof}
\medskip
\section{Further discussion}
In Lemma 2.9 we proved that if  $f$ is a PCF map on $\mathbb{P}^2$, then the set of sporadic  super-saddle cycles  is a finite set. However we do not know any examples of PCF maps on $\mathbb{P}^2$ carrying such sporadic super-saddle fixed points. One may conjecture that such sporadic super-saddle fixed points actually can not exist for PCF maps on $\mathbb{P}^2$. Theorem 3.4 may be seen as an evidence supporting this conjecture.
\medskip
\par It is natural to ask whether our main result Theorem 1.1 can be generalized to higher dimension. Let $f$ be a PCF map on $\mathbb{P}^k$, one may conjecture that in nice enough cases, the  picture might be the following: 
\medskip
\par (1) Let $J_m$ be the Julia set of order $m$. Then $J_m\setminus J_{m+1}$ is contained in the union of the attracting basins of $m$-dimensional periodic submanifolds, where these submanifolds are contained in the critical set.
\medskip
\par (2) There is no Fatou disk  containing $x$ for  every $x\in J_k$.
\medskip 
\par At least two difficulties need to be overcome for proving this conjectural picture. The first is that the classification of points of bounded ramification is not known for higher dimensional PCF maps. In dimension 2 this classification is known by Lemma 2.4. A more serious problem is that, even if the classification of the points of bounded ramification is known, one can possibly prove that for $x\in \mathbb{P}^k\setminus J_k$, $\omega(x)$ is contained in the set of points of unbounded ramification. But it is not possible to show (1), the structure of the Julia sets filtration, by the argument in Theorem 4.5 and 4.8, since  in our argument the points of bounded ramification play an important role.
\medskip 
\bibliographystyle{plain}
\bibliography{PCF}
	
\end{document}